\newtheorem{theorem}{Theorem}[section]
\newtheorem{lemma}[theorem]{Lemma}
\newtheorem{prop}[theorem]{Proposition}
\newtheorem{cor}[theorem]{Corollary}
\newtheorem{rmk}[theorem]{Remark}
\newtheorem{defn}[theorem]{Definition}
\newcounter{defn}
\newcommand{\sect}{\vspace{3mm} \setcounter{equation}{0} \setcounter{defn}{0} \section}
\newcommand{\w}[1]{\langle {#1} \rangle}
\newcommand{\pf}{\noindent {\bf Proof. \hspace{2mm}}}
\newcommand{\ef}{ \hfill $ \Box $ \vskip 3mm}
\newcommand{\be}{\begin{equation}}
\newcommand{\ee}{\end{equation}}
\newcommand{\bea}{\begin{eqnarray}}
\newcommand{\eea}{\end{eqnarray}}
\newcommand{\ep}{{\epsilon}}
\newcommand{\f}{\frac}
\newcommand{\na}{\nabla}
\newcommand{\bC}{{\mathbb C}}
\newcommand{\bR}{{\mathbb R}}
\newcommand{\bN}{{\mathbb N}}
\newcommand{\vB}{{\mathcal B}}
\newcommand{\vF}{{\mathcal F}}
\newcommand{\vH}{{\mathcal H}}
\newcommand{\vL}{{\mathcal L}}
\newcommand{\vN}{{\mathcal N}}
\newcommand{\vS}{{\mathcal S}}
\newcommand{\gm}{{\frak m}}
\def\p{\partial}
\def\f{\frac}
\def\Dl{\Delta}
\def\na{\nabla}
\def\Dl{\Delta}
\def\i{\infty}
\begin{document}

\title[The Kramers-Fokker-Planck equation]{Large-time asymptotics of solutions \\
 to the Kramers-Fokker-Planck equation \\
 with  a short-range potential}
\author{Xue Ping  WANG}
\address{Laboratoire de Mathématiques Jean Leray\\
UMR CNRS 6629\\
Université de Nantes \\
44322 Nantes Cedex 3  France \\
E-mail: xue-ping.wang@univ-nantes.fr}

\thanks{Research  supported in part by French ANR Project NOSEVOL BS01019 01 and by Chinese Qian Ren programme at Nanjing University.}
\subjclass[2000]{35J10, 35P15, 47A55}
\keywords{Time-decay of solutions, nonselfadjoint operators, pseudo-spectral estimates, Krammers-Fokker-Planck equation}

\begin{abstract}

In this work, we use scattering method to study the Kramers-Fokker-Planck equation with a potential whose gradient tends to zero at the
infinity.  For short-range potentials in dimension three, we show that complex eigenvalues do not accumulate at low-energies and establish the low-energy resolvent asymptotics. This combined with high energy pseudospectral estimates  valid in more general situations gives the large-time asymptotics of the solution in weighted $L^2$ spaces.
\end{abstract}

\maketitle

\sect{Introduction}

The Kramers equation (\cite{risc}), also called the Fokker-Planck equation (\cite{dv,hln,hrn}) or the Kramers-Fokker-Planck equation (\cite{hhs,hss}),  is the evolution equation for the distribution functions describing  the Brownian motion of particles in an external field :
\be \label{kfp1}
\frac{\partial W}{\partial t} = \left( -v\cdot \nabla_x  + \nabla_v \cdot(\gamma v  -\f{F(x) }{m})
+ \f{\gamma k T }{m} \Delta_v \right)W,
\ee
where $W=W(t; x,v)$,  $x, v \in \bR^n$, $t \ge 0$ and $F(x) = - m \nabla V(x)$ is the external force.
In this equation,  $x$ and $v$  represent  the position and velocity of particles, $m$ the mass, $k$ the Boltzmann constant, $\gamma$ the friction coefficient and $T$ the temperature of the media.  This equation is a special case of the more general  Fokker-Planck equation (\cite{risc}).
After a change of unknowns and for appropriate values of physical constants, the Kramers-Fokker-Planck  equation (\ref{kfp1})  can be written into the form (\cite{hln,risc})
\be\label{equation} \p_t u(t; x,v)+P u(t; x,v)=0,\ (x,v)\in\bR^n\times\bR^n, n \ge 1,  t >0,
\ee
with some initial data
\be\label{initial} u(0; x,v)=u_0(x,v). \ee
Here $P$ is the Kramers-Fokker-Planck (KFP, in short)  operator:
\be\label{operator} P=-\Dl_v+\f{1}{4}|v|^2-\f{n}{2} + v\cdot\na_x-\na V(x)\cdot\na_v, \ee
where the potential $V(x)$ is supposed to be  a real-valued $C^1$ function.\\

The large-time asymptotics of the solution is motivated by the trend to  equilibrium in statistical physics and is studied by several authors in the case where $|\nabla V(x)| \to + \infty$ as $|x|\to + \infty$.  See  \cite{aggmms,dv,hkn,hln,hrn,hhs,v} and references quoted therein. Note that in the work \cite{hhs} on low-temperature analysis of the tunnelling effect, only the condition $|\nabla V(x)| \ge C>0$ outside some compact set is needed. In these cases,  the spectrum of $P$ is discrete in a neighbourhood of zero and nonzero eigenvalues of $P$ are of strictly positive real parts. If in addition $V(x)>0$ outside some compact set, the square-root, $\gm$, of the Maxwilliam is an eigenfunction of $P$ associated with the eigenvalue zero, where $\gm$ is defined by
\be
\gm(x,v)= \f{1}{(2\pi)^{\f n 4}}e^{-\f 1 2 (\f{v^2}{2} + V(x))}.
\ee
It is proven in these situations that
the solution $u(t)$ to (\ref{equation}) with initial data $u_0$ converges exponentially rapidly to $\gm$  
in the sense that there exists  some $\sigma>0$ such that for any nice initial data  $u_0$, one has in appropriate spaces
\be \label{eq1.6}
u(t)= \w{\gm, u_0} \gm + O(e^{-\sigma t}), \quad t\to +\infty,
\ee
where $V(x)$ is assumed to be normalized by 
\[
\int_{\bR^n} e^{-V(x)} dx =1
\]
 and $\sigma$ can be evaluated in some regimes in terms of the spectral gap between  zero and the real parts of other nonzero eigenvalues.  Since the change of the unknown from (\ref{kfp1}) to (\ref{equation})
is essentially given by $W(t) = \gm u(t)$ with appropriate choice of physical constants, this result shows that the distribution functions governed by (\ref{kfp1}) always tend, up to some multiplicative constant  depending only on the initial data, to the Maxwillian $\gm^2$, as $t \to +\infty$ and justifies the well-known phenomenon of the return to equilibrium in statistical physics. 
Since the KFP operator is neither elliptic nor selfadjoint,  the proof of such result is highly nontrivial and is realized first  by the entropy method in \cite{dv} (see also \cite{v}) and later on by microlocal and spectral methods in \cite{hln,hrn,hhs}. If $V(x)$ is slowly increasing so that $|\nabla V(x)|\to 0$ as $|x|\to\infty$, (for example, $V(x) \sim c \w{x}^\mu$ or $V(x) \sim a \ln |x|$ as $|x|\to \infty$ for some $0<\mu <1$ and $a, c>0$), $\gm$ is still an eigenfunction of $P$ associated with the eigenvalue zero (in the second case it may be an eigenfunction for some values of $a$ and a resonant state for some other values of $a$), but now the essential spectrum of $P$ is equal to $[0, +\infty[$ and there is no spectral gap between the eigenvalue zero and the other part of the spectrum of $P$. A natural question at this connection is whether there still exists some phenomenon of the return to equilibrium in such cases. \\

 The goal of this work is to study  spectral properties of the KFP operator and  large-time asymptotics of the solutions to the KFP  equation  with a potential $V(x)$ such that $|\nabla V(x)| \to 0$ as $|x|\to +\infty$.  Although our final result concerns only short-range potentials in dimension three, some results hold  for
slowly increasing potentials in any dimension. Throughout this work,   we assume that $V$ is $C^1$ on $\bR^n$ and
\be \label{ass}
|\nabla V(x)| \le C\w{x}^{-1-\rho},  \quad x\in \bR^n,
\ee
for some $\rho \ge -1$, where $\w{x} = (1 + |x|^2)^{1/2}$. The potential is said to be slowly increasing if $-1 < \rho \le 0$ and if it is positive near outside some compact, and to be of short-range if $\rho>1$. The KFP operator  $P$ with the maximal domain in $L^2$ is a closed, accretive ( $\Re P \ge 0$) and hypoelliptic operator.  Denote
\be
P = P_0 + W,
\ee
 with $ P_0 = v\cdot\na_x-\Dl_v+\f{1}{4}|v|^2-\f{n}{2}$  and $ W= -\na V(x)\cdot\na_v$.
 If $\rho>-1$, $W$ is a relatively compact perturbation of the free KFP operator $P_0$: $ W (P_0+1)^{-1}$ is a compact operator in $L^2$. One can check that the essential spectrum of $P$ is equal to $\sigma(P_0) = [0, +\infty[$ and the non-zero complex eigenvalues of $P$ have strictly positive real parts and may accumulate towards some points in the essential spectrum. 
\\

The main results of this paper can be summarized as follows.

\begin{theorem}\label{th1.1}
 (a). Assume  $n \ge 1$ and  the condition (\ref{ass}) with $\rho \ge -1$. 
Then there exists $C>0$ such that   $\sigma (P) \cap \{z; |\Im z| > C,  \Re z \le \f 1 C |\Im z|^{\f 1 3}  \} = \emptyset$ and
the resolvent $R(z) =(P-z)^{-1}$ satisfies the estimates
\be \label{eq1.21}
\|R(z)\| \le \f{C}{|z|^{\f 1 3}}, \quad
\ee
and
\be \label{eq1.22}
\|(1-\Delta_v + v^2)^{\f 1 2}R(z)\| \le \f{C}{|z|^{\f 1 6}}, \quad
\ee
for $|\Im z| > C $ and $ \Re z \le \f 1 C |\Im z|^{\f 1 3}$.
\\

(b). Assume  $n=3$ and $\rho >1$.  Then  $P$ has no eigenvalues  in a neighborhood of $0$. Let $S(t) =e^{-tP}$ be the semigroup of contractions generated by $-P$. One has 
\be \label{eq1.23}
\|S(t)\|_{\vL^{2,s} \to \vL^{2,-s}} \le C  t^{- \f 3 2}, \quad t  >0.
\ee
for any $s>\f 3 2$.  Here $\vL^{2,s} = L^{2}(\bR^{2n}_{x,v}; \w{x}^{2s} dx dv)$. \\

(c). Assume  $n=3$ and $\rho >2$. Then for  any $s> \f 3 2$, there exists $\ep>0$ such that  one has the following asymptotics
\be \label{eq1.24}
S(t) =   \frac{1}{ (4\pi t)^{\f 3 2}}\w{\gm, \cdot} \gm  + O(t^{- \f 3 2 -\ep}), \quad t \to + \infty,
\ee
as operators from $ \vL^{2, s} $ to $\vL^{2, -s}$. 
\end{theorem}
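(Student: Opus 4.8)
The plan is to recover $S(t)$ from the resolvent by the semigroup inversion formula and to push the contour onto $\si(P)$, the only non-negligible contribution coming from a neighbourhood of $z=0$. For $t>0$ and $u$ in a suitable dense set one has
\be
S(t)u=\f{1}{2\pi i}\int_{\Re z=-a}e^{-tz}R(z)u\,dz,\qquad a>0,
\ee
and one deforms the vertical line to the right onto $\si(P)$. This is legitimate because: by part (b) there is no eigenvalue in a disc $\{|z|<2\dl\}$; by part (a) the parabolic region $\{|\Im z|>C,\ \Re z\le\f1C|\Im z|^{\f13}\}$ is spectrum-free with $\|R(z)\|\le C|z|^{-\f13}$ there; the complex eigenvalues do not accumulate at low energies, so (together with part (b)) all eigenvalues satisfy $\Re z\ge\si_0$ for some $\si_0>0$; and the limiting absorption principle holds at positive energies. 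One is left with
\be
S(t)=\f{1}{2\pi i}\int_0^{\dl}e^{-t\la}\big(R(\la+i0)-R(\la-i0)\big)\,d\la+\vR(t),
\ee
where $\vR(t)$ gathers the residues at the eigenvalues, the tail of the jump integral on $[\dl,+\i[$, and the part of the deformed contour lying in the region of part (a). The first two are $O(e^{-\si_0 t/2})$ in $\vL^{2,s}\to\vL^{2,-s}$, while for the last, parametrising the parabola by $\si=\Im z$, using $(1.21)$ and $|e^{-tz}|=e^{-t|\Im z|^{\f13}/C}$ and substituting $\si=\tau^3$ gives a bound $O(t^{-2})$. Hence $\vR(t)=O(t^{-2})$.

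The core of the matter is the behaviour of $R(z)$ as $z\to 0$, $z\notin[0,+\i[$, between the weighted spaces with $s>\f32$. For the free operator $P_0$ I would perform a Grushin (Feshbach) reduction with respect to the rank-one projection, in the velocity variable, onto the ground state $(2\pi)^{-n/4}e^{-v^2/4}$ of the harmonic oscillator $-\Dl_v+\f14|v|^2-\f n2$; since the off-diagonal coupling is $v\cdot\na_x$ and the first excited level of the oscillator equals $1$, the effective Hamiltonian on $L^2(\bR^3_x)$ is $-\Dl_x-z$ up to terms not affecting the leading low-energy behaviour, so in dimension three its inverse has the Jensen--Kato expansion and one reads off
\be
R_0(z)=B_0+z^{\f12}B_1+O\!\left(z^{\f12+\ep'}\right),\qquad z\to 0,
\ee
with $z^{\f12}$ the branch cut along $[0,+\i[$ and $B_1$ of rank one. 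For the full operator one writes $R(z)=R_0(z)\big(1+WR_0(z)\big)^{-1}$; because $P\gm=0$ while $\gm\notin L^2$ (a threshold resonance rather than an $L^2$ eigenvalue --- which is precisely why part (b) holds), the function $\gm$ spans, in the weighted space, the one-dimensional kernel of the limiting operator $1+WR_0(0)$, and a second Feshbach reduction with respect to that kernel, whose remainder can be controlled because $\rho>2$ provides enough decay of $\na V$, yields
\be
R(z)=A_0+z^{\f12}A_1+O\!\left(z^{\f12+\ep}\right),\qquad A_1=\f{i}{4\pi}\,\w{\gm,\cdot}\,\gm,
\ee
for some $\ep>0$, the constant $\f{i}{4\pi}$ being that of the free three-dimensional Laplacian resolvent, transported through the two reductions and rescaled by the normalisation of $\gm$. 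Under the weaker hypothesis $\rho>1$ one still obtains $R(z)=A_0+O(z^{\f12})$, which is all that is needed for $(1.23)$.

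It then remains to insert this expansion into the jump integral. The term $A_0$ is continuous across $[0,\dl]$ and contributes nothing; the remainder contributes $\int_0^{\dl}e^{-t\la}O\!\left(\la^{\f12+\ep}\right)d\la=O\!\left(t^{-\f32-\ep}\right)$; and since $(\la+i0)^{\f12}-(\la-i0)^{\f12}=2\la^{\f12}$, the $z^{\f12}$ term gives
\be
\f{1}{2\pi i}\int_0^{\dl}e^{-t\la}\,2\la^{\f12}\,A_1\,d\la=\f{A_1}{\pi i}\int_0^{\i}e^{-t\la}\la^{\f12}\,d\la+O(e^{-ct})=\f{A_1}{\pi i}\cdot\f{\s\pi}{2\,t^{\f32}}+O(e^{-ct})=\f{1}{(4\pi t)^{\f32}}\,\w{\gm,\cdot}\,\gm+O(e^{-ct}),
\ee
using $(4\pi)^{\f32}=8\pi^{\f32}$. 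Collecting this with $\vR(t)=O(t^{-2})$ and choosing $\ep<\f12$ yields $(1.24)$; the same scheme with only $R(z)=A_0+O(z^{\f12})$ gives $(1.23)$. The main obstacle is the low-energy resolvent analysis: carrying out the two non-selfadjoint, non-elliptic Grushin reductions, identifying the threshold resonance with $\gm$, and --- above all --- establishing Jensen--Kato-type remainder estimates that are \emph{uniform up to the cut} $[0,+\i[$, with the precise constant. Since $\gm\notin L^2$, all of this must be carried out in the weighted spaces $\vL^{2,\pm s}$, which is what forces the dimensional restriction $n=3$ and the threshold $s>\f32$ (the exponent for which $\gm\in\vL^{2,-s}$); by comparison the high-frequency part is soft once part (a) is available, and the intermediate spectrum contributes only exponentially small terms.
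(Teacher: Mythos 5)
Your overall scheme --- representing $S(t)$ as a contour integral of $R(z)$, deforming onto the spectrum, and extracting the leading term from the jump of the $z^{\f12}$ coefficient of the low-energy resolvent expansion via Watson's lemma --- is exactly the paper's (Lemma \ref{lem5.2}, Corollary \ref{cor5.3}, Theorem \ref{th5.4}), and your final arithmetic $\f{1}{\pi i}\cdot\f{i}{4\pi}\cdot\f{\sqrt\pi}{2}=\f{1}{8\pi^{3/2}}$ reproduces the constant. The free resolvent expansion you postulate is also what the paper proves (Proposition \ref{prop3.7}), by an equivalent device: the explicit Riesz projections $\Pi_0^\xi$ of the fibered operators $\hat P_0(\xi)$ rather than a Grushin reduction.

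The genuine gap is in the threshold analysis of the full operator, and it is not cosmetic: you assert that $\gm$ spans a one-dimensional kernel of $1+WR_0(0)$ in the weighted space, that a second Feshbach reduction around this kernel is required, and that this ``threshold resonance'' is why part (b) holds. The truth is the opposite, and it is the crux of the paper (Lemma \ref{lem4.1}): $\ker_{\vL^{2,-s}}(1+G_0W)=\{0\}$ for $1<s<(1+\rho)/2$. Any element of this kernel bootstraps into $\vL^{2,-s}$ for every $s>\f12$ and is forced to be a multiple of $\gm$; but $\gm$ has no decay in $x$, so $\gm\notin\vL^{2,-s}$ for $\f12<s<\f32$ and the multiple vanishes (note also $(1+G_0W)\gm=\gm_0\neq0$, so $\gm$ is not in the kernel even for $s>\f32$). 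Hence zero is a \emph{regular} point: $1+G_0W$ is boundedly invertible, which is precisely what yields part (b), and $R(z)=(1+R_0(z)W)^{-1}R_0(z)$ expands by a plain Neumann series with no second reduction, giving $A_1=(1+G_0W)^{-1}G_1(1-WA_0)$ with $G_1=\f{i}{4\pi}\w{\gm_0,\cdot}\gm_0$, $\gm_0=1\otimes\psi_0$. Had the kernel genuinely been one-dimensional, the expansion would acquire a singular $z^{-\f12}$ term and the decay would degrade to $t^{-\f12}$, contradicting (\ref{eq1.23}); so your resonance picture, if taken seriously, refutes the theorem you are proving. Finally, identifying $A_1$ with $\f{i}{4\pi}\w{\gm,\cdot}\gm$ is a separate step you have elided: one must prove $(1+G_0W)^{-1}\gm_0=\gm$ and $(1-WA_0)^*\gm_0=\gm$, which the paper obtains from $P\gm=0$ together with the limit $\lim_{\lambda\to0_-}\lambda R_0(\lambda)\gm=-\gm_0$ (Proposition \ref{prop3.11}). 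Your proposal also consumes parts (a) and (b) as hypotheses rather than proving them; (a) rests on the semiclassical bound of \cite{dsz} for the rescaled fibered operators, and (b) is exactly the triviality-of-kernel statement your argument denies.
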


It may be interesting to compare (\ref{eq1.24}) with (\ref{eq1.6}). The space distributions of solutions to (\ref{kfp1}) in both cases are governed by the Maxwillian, but for decreasing potentials, the density of distribution decays in time like $t^{-\f 3 2}$.  Recall that it is well-known for Schr\"odinger operator  $H= -\Delta_x + U(x)$ with a real-valued potential $U(x)$ that the space-decay rates of solutions to  $Hu=0$ determine the 
low-energy asymptotics of the resolvent $(H-z)^{-1}$ near the threshold zero, which in turn determine large-time asymptotics of  solutions to the evolution equation (see, for example, \cite{w0}).  From this point of view, the  difference between (\ref{eq1.6}) and (\ref{eq1.24}) may be explained by the lack of decay in $x$-variables in the Maxwillian $\gm^2$ in our case and one may even expect that the solution to the KFP equation behaves like $t^{-\alpha}$ with $\alpha$ depending on $a$ for critical potentials $V(x) \sim a\ln |x|$, $a \in ]\f{n-2}{2}, \f n 2[$.  Notice also that different from (\ref{eq1.6}) there is no normalization for $V(x)$ in (\ref{eq1.24}). Although the KFP operator is a differential operator of the first order in $x$-variables,  the large-time behavior   of solutions looks like those to the heat equation described by  $e^{t \Delta_x}$ as $t\to+\infty$. This  is due to the interplay between the diffusion part and the transport part of the KFP operator $P$ and will become clear from spectral decomposition formula of $e^{-tP_0}$. Under stronger assumption on $\rho$, one can calculate the second term in large-time asymptotics of solutions which is of the order $O(t^{-\f 5 2})$ in dimension three.\\

The method used in this work is  scattering in nature: we regard the full KFP operator $P$ as a perturbation of the free KFP operator $P_0$ without potential. A large part of this work is devoted to a detailed analysis of the free KFP operator. Several basic questions  remain open for $P_0$ such as the high energy estimates for the free resolvent near the positive real axis.
 A key step in the proof of the large-time asymptotics of solutions to the full KFP equation is to show that the complex eigenvalues of $P$ do not accumulate towards the threshold zero. So far as the author  knows,  such a statement is not yet proven for non-selfadjoint Schr\"odinger operators $-\Delta + U(x)$ with  a general complex-valued potential satisfying $|U(x)| = O(|x|^{-\rho}), \rho >2$. (See however \cite{ls,w1} for dissipative potentials ($\Im U(x) \le 0$)). 
Results like (\ref{eq1.23}) and (\ref{eq1.24}) may fail if there is a sequence of complex eigenvalues tending to zero tangentially to the imaginary axis. For the KFP operator $P$ with a short-rang potential in dimension three, we prove that there are no complex eigenvalues in  neighborhood of zero by making use of the method of threshold spectral analysis and the supersymmetric structure of the operator. \\

The organisation of this work is as follows.  
In Section 2, we study in detail the spectral properties of the model operator $P_0$  and establish some dispersive estimate for the semigroup generated by $-P_0$. We also prove  the limiting absorption principles  and the low-energy asymptotics of the resolvent $R_0(z) =(P_0 -z)^{-1}$, as well as some high energy resolvent estimates. these estimates are pseudo-spectral in nature, because the numerical range of the free KFP operator is equal to the right half complex plane.   
The threshold spectral properties of the full KFP operator $P$ with a short-range potential is analyzed in Section 3. We prove that there is no eigenvalue of $P$ in a neighborhood of zero and calculate the lower-energy resolvent asymptotics.
For technical reasons, we only prove these results when the dimension $n$ is equal to three, but we believe that they  remain true when $ n\ge 4$. Finally, in Section 4, we prove a high energy pseudo-spectral estimate in general situation and this combined with the low energy resolvent estimates obtained in dimension three allows to prove  the time-decay and the large-time asymptotics of  solutions. In Appendix A,  we study a family of nonselfadjoint  harmonic oscillators which may be regarded as complex translation in variables of selfadjoint harmonic oscillators. We prove some quantitative estimates with respect to the parameters of translation, establish a spectral decomposition formula and prove some uniform time-decay estimates of the semigroup. These results are used in Section 2 to analyze the free KFP operator.

\sect{The free Kramers-Fokker-Planck operator}

 Denote by $P_0$ the free KFP operator (with $\nabla V=0$):
\be
P_0=v\cdot\na_x-\Dl_v+\f{1}{4}|v|^2-\f{n}{2}.
\ee
$P_0$ is a  non-selfadjoint and hypoelliptic  operator with loss of $\f 1 3$ derivative in $x$ variables. The following result is known (see \cite{hln,n}). In particular, the essential maximal accretivity is discussed in Proposition 2.4 of \cite{n}.

\begin{prop} \label{prop2.1}  One has
\be
\|\Delta_v u \|^2 + \||v|^2u\|^2 + \| |D_x|^{\f 23 } u\|^2 \le C ( \|P_0u \|^2 + \|u\|^2), \quad  u\in \vS(\bR_{x,v}^{2n})
\ee
$P_0$ defined on $\vS(\bR^{2n}_{x,v})$ is essentially maximally accretive, {\it i.e.}, the closure of $P_0$ in $L^2(\bR^{2n}_{x,v})$  with core $\vS(\bR^{2n}_{x,v})$ is of maximal domain $D(P_0) = \{u \in L^2(\bR_{x,v}^{2n}); P_0u \in L^2(\bR_{x,v}^{2n})\}$ and $\Re \w{P_0u, u} \ge 0$ for $u\in D(P_0)$.
\end{prop}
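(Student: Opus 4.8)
The plan is to prove the two assertions --- the a priori subelliptic estimate and essential maximal accretivity --- in that order, since the estimate is the engine behind both.

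First I would establish the a priori estimate for $u \in \vS(\bR^{2n}_{x,v})$. Write $P_0 = X + Y$ where $X = v\cdot\na_x$ is the (skew-adjoint) transport part and $Y = -\Dl_v + \tfrac14|v|^2 - \tfrac n2$ is the (self-adjoint, nonnegative) harmonic oscillator in $v$ alone. Taking the real part of $\w{P_0 u, u}$ kills $X$ and leaves $\Re\w{P_0 u,u} = \w{Yu,u} = \|\na_v u\|^2 + \tfrac14\||v|u\|^2 - \tfrac n2\|u\|^2$; combined with the elementary commutator identity $[\na_v, \tfrac12 v] $ this already gives control of $\|\na_v u\|^2 + \||v|u\|^2$ by $\|P_0 u\|\,\|u\| + \|u\|^2$, hence $\|Y u\| \lesssim \|P_0 u\| + \|u\|$ after another application (the oscillator is ``elliptic'' in its own variables, so $\|\Dl_v u\| + \||v|^2 u\| \lesssim \|Yu\| + \|u\|$ by the standard spectral estimate for $-\Dl_v + \tfrac14|v|^2$). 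The genuinely non-trivial part is the gain of $\tfrac23$ derivative in $x$: this is the hypoelliptic estimate of H\"ormander/Kohn type for the Kolmogorov-type operator $v\cdot\na_x - \Dl_v$. I would obtain $\||D_x|^{2/3} u\|^2 \lesssim \|P_0 u\|^2 + \|u\|^2$ by the now-standard route: either a direct Fourier-multiplier computation exploiting that $[v\cdot\na_x, \na_v] = \na_x$ (so one commutator of the transport field with the ``elliptic'' directions recovers the full $x$-gradient, and scaling $|v| \sim |\xi|^{1/3}$, $|\eta| \sim |\xi|^{1/3}$ balances the three contributions to give the $2/3$ exponent), or by quoting Proposition~\ref{prop2.1} itself as stated in \cite{hln,n}. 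I will follow the references for this point and record the commutator bookkeeping only schematically.

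Next, essential maximal accretivity. Accretivity $\Re\w{P_0 u, u} \ge 0$ on $\vS$ is immediate from the computation above once one notes $\|\na_v u\|^2 + \tfrac14\||v|u\|^2 - \tfrac n2\|u\|^2 \ge 0$ (the bottom of the spectrum of the harmonic oscillator $-\Dl_v + \tfrac14|v|^2$ is exactly $\tfrac n2$), and it extends to the closure by continuity. For maximal accretivity it suffices, by the Lumer--Phillips theorem, to show that $P_0 + 1$ (equivalently $P_0 + \lambda$ for some, hence all, $\lambda > 0$) has dense range on $\vS$; equivalently that the only $w \in L^2$ with $(P_0^* + 1) w = 0$ in the distributional sense is $w = 0$. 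Since $P_0^* = -v\cdot\na_x - \Dl_v + \tfrac14|v|^2 - \tfrac n2$ is of the same structure (the sign flip of the transport term is harmless), hypoellipticity gives $w \in \vS$-type regularity a posteriori, and then pairing $(P_0^* + 1)w = 0$ with $w$ and taking real parts forces $\w{Yw,w} + \|w\|^2 = 0$, whence $w = 0$. The only care needed is that the a priori estimate is proved on the Schwartz core and must be transported to $D(P_0)$ by a mollification/cut-off argument showing $\vS$ is a core for the maximal operator --- this is exactly the content quoted from Proposition~2.4 of \cite{n}, and I would cite it rather than reprove it.

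The main obstacle is the subelliptic $|D_x|^{2/3}$ gain: it is the one ingredient that is not a soft functional-analytic manipulation and requires the Kolmogorov-operator hypoellipticity estimate with the sharp exponent. Everything else --- accretivity, the oscillator estimates in $v$, the Lumer--Phillips argument, and the core property --- is routine given that estimate and the cited results, so I would present those briefly and concentrate whatever detail is warranted on the commutator/scaling heuristic (or simply defer to \cite{hln,n}) for the $x$-regularity.
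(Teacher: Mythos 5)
The paper offers no proof of this proposition at all: it is quoted as known, with the subelliptic estimate attributed to \cite{hln} and the essential maximal accretivity to Proposition 2.4 of \cite{n}. Your sketch follows exactly the standard route of those references (energy identity for the real part, Kohn/H\"ormander commutator argument for the $|D_x|^{2/3}$ gain, Lumer--Phillips plus a cut-off argument for the core property), so in substance you are reconstructing the cited proofs rather than diverging from the paper.

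One step of your sketch is genuinely imprecise and worth flagging. You present the bounds on $\|\Delta_v u\|$ and $\||v|^2u\|$ as soft consequences of the first-order energy identity (``hence $\|Yu\|\lesssim\|P_0u\|+\|u\|$ after another application''), reserving the hard hypoelliptic work for the $x$-variable alone. This does not work as stated: writing $P_0=X+Y$ with $X=v\cdot\nabla_x$, the identity $\|P_0u\|^2=\|Xu\|^2+\|Yu\|^2+\langle u,[Y,X]u\rangle$ has cross term $[Y,X]=-2\nabla_x\cdot\nabla_v$, so isolating $\|Yu\|$ costs a full $x$-derivative paired against $\nabla_v u$ --- more than the $|D_x|^{2/3}$ the estimate provides. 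In the references the second-order $v$-estimates and the $2/3$-derivative gain in $x$ are obtained together, by the same commutator/multiplier machinery (pseudodifferential weights of order $1/3$ in $\xi$, or the algebraic approach via creation--annihilation operators); they are not decoupled. Since you ultimately defer to \cite{hln,n} for the hypoelliptic estimate, the proposal is acceptable as a citation-level argument --- which is all the paper itself does --- but the claim that only the $|D_x|^{2/3}$ term is non-routine should be corrected. The remaining points (accretivity on $\vS$ from the ground-state energy $\tfrac n2$ of $-\Delta_v+\tfrac14|v|^2$, density of the range via the kernel of the formal adjoint, and the cut-off argument identifying the closure with the maximal operator) are correctly organized.
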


Henceforth we still denote by $P_0$ its closed extension in $L^2$ with maximal domain $D(P_0)= \{u \in L^2(\bR_{x,v}^{2n}); P_0u \in L^2(\bR_{x,v}^{2n})\}$.\\

In terms of Fourier transform in $x$-variables, we have for $u \in D(P_0)$
\bea
 P_0 u (x, v)& = &\vF_{x\rightarrow\xi}^{-1}\hat{P}_0(\xi) \hat{u}(\xi, v), \quad \mbox{ where }\\
\hat{P}_0(\xi) &=&-\Dl_v+\f{1}{4}\sum^n_{j=1}(v_j+2i\xi_j)^2-\f{n}{2}+|\xi|^2 \\
 \hat{u}(\xi, v) & = & (\vF_{x\rightarrow\xi}u)(\xi, v) \triangleq \int_{\bR^n} e^{-i x\cdot\xi}u(x, v) \; dx.
 \eea
Denote
\be
D(\hat{P}_0) =  \{f \in L^2(\bR^{2n}_{\xi, v}); \hat{P}_0(\xi)f   \in L^2(\bR^{2n}_{\xi, v})\}.
\ee
Then $\hat{P}_0  \triangleq\vF_{x\rightarrow\xi} P_0 \vF_{x\rightarrow\xi}^{-1}$ is a direct integral of the  family of complex harmonic operators $\{\hat{P}_0(\xi); \xi \in \bR^n \}$ which is studied in Appendix A.
 \\
 
The following abstract result may be useful to determine the spectrum of operators which are direct integral of a family of nonselfadjoint operators.  See also Theorem 8.3 of \cite{d4} for families of bounded nonselfadjoint operators. 

\begin{theorem} \label{th3.1} Let $H$ be a separable Hilbert space, $X $ a non empty open set of $\bR^n$ and $\vH = \{f : X \to H; \|f\| = (\int_X \|f(x)\|_H^2 dx\}^{\f 1 2} < + \infty\}$. Here $dx$ is the Lebesgue measure of $\bR^n$. Suppose that  both $\{Q(x); x\in X\}$  and the adjoints  $\{Q(x)^*; x\in X\}$  are  strongly continuous families of closed, densely defined operators with constant domains $D, D^* \subset H$, respectively.
Suppose in addition that  for each $z \in \bC \setminus \overline{ \cup_{x\in X} \sigma (Q(x))}$, one has
\be \label{e3.2}
\sup_{x\in X} \| (Q(x) -z)^{-1} \| < +\infty.
\ee
Let $Q$ be the closed, densely defined operator in $\vH$ such that for any $f$ in the domain of $Q$, one has
$f(x) \in D$ about everywhere and
\be
Qf = Q(x)f(x) \quad \mbox{ in } \vH.
\ee
Then one has
\be \label{eq3.3}
\sigma(Q) = \overline{ \cup_{x\in X} \sigma (Q(x))}
\ee
\end{theorem}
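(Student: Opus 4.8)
\medskip
\noindent\textbf{Proof strategy.} The plan is to prove the two inclusions in (\ref{eq3.3}) separately. The inclusion $\sigma(Q)\subseteq\overline{\cup_{x\in X}\sigma(Q(x))}$ will follow directly from the uniform resolvent bound (\ref{e3.2}). Indeed, if $z\notin\overline{\cup_x\sigma(Q(x))}$ then $\mathrm{dist}(z,\cup_x\sigma(Q(x)))>0$, so $(Q(x)-z)^{-1}$ is bounded for every $x$ and $M:=\sup_x\|(Q(x)-z)^{-1}\|<\infty$. First I would show that $x\mapsto(Q(x)-z)^{-1}$ is strongly continuous: with $u=(Q(x)-z)^{-1}v$ one has the resolvent-type identity $(Q(x')-z)^{-1}v-(Q(x)-z)^{-1}v=-(Q(x')-z)^{-1}(Q(x')-Q(x))u$, so the bound $M$ and the strong continuity of $\{Q(x)\}$ at the fixed vector $u\in D$ give $\|(Q(x')-z)^{-1}v-(Q(x)-z)^{-1}v\|\le M\|(Q(x')-Q(x))u\|\to 0$ as $x'\to x$. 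Since $H$ is separable, $x\mapsto(Q(x)-z)^{-1}f(x)$ is then measurable for every $f\in\vH$, and
\be
\int_X\|(Q(x)-z)^{-1}f(x)\|_H^2\,dx\le M^2\|f\|^2 ,
\ee
so $R(z)f:=(Q(\cdot)-z)^{-1}f(\cdot)$ defines a bounded operator on $\vH$ with $\|R(z)\|\le M$. Since $(R(z)f)(x)\in D$ a.e.\ and $Q(x)(R(z)f)(x)=f(x)+z(R(z)f)(x)\in\vH$, one gets $R(z)f\in D(Q)$ and $(Q-z)R(z)=I$, while $R(z)(Q-z)=I$ on $D(Q)$ by the same fibrewise computation; hence $z\notin\sigma(Q)$.

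For the reverse inclusion, since $\sigma(Q)$ is closed it suffices to show that $z_0\in\sigma(Q(x_0))$ implies $z_0\in\sigma(Q)$ for every $x_0\in X$, and I would distinguish two cases according to whether $Q(x_0)-z_0$ is bounded below. If it is not, choose $u_k\in D$ with $\|u_k\|=1$ and $\|(Q(x_0)-z_0)u_k\|\to0$; by strong continuity of $Q(\cdot)u_k$ at $x_0$ one can pick shrinking balls $B(x_0,r_k)\subset X$ with $\sup_{x\in B(x_0,r_k)}\|(Q(x)-Q(x_0))u_k\|\le 1/k$ and scalar functions $\phi_k$ supported in $B(x_0,r_k)$ with $\|\phi_k\|_{L^2(X)}=1$. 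Then $f_k(x):=\phi_k(x)u_k$ lies in $D(Q)$, $\|f_k\|_\vH=1$, and
\be
\|(Q-z_0)f_k\|_\vH^2=\int_X|\phi_k(x)|^2\,\|(Q(x)-z_0)u_k\|^2\,dx\le\bigl(\|(Q(x_0)-z_0)u_k\|+\tfrac1k\bigr)^2\longrightarrow 0 ,
\ee
so $z_0$ lies in the approximate point spectrum of $Q$, hence in $\sigma(Q)$.

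If instead $Q(x_0)-z_0$ is bounded below then, $Q(x_0)$ being closed, it has closed range, and this range is proper (otherwise $Q(x_0)-z_0$ would be bijective with bounded inverse, i.e.\ $z_0\in\rho(Q(x_0))$); hence there is $w\neq0$ with $w\in\ker(Q(x_0)^*-\overline{z_0})\subset D^*$ and $w\perp\ran(Q(x_0)-z_0)$. Rather than disintegrating $Q^*$, I would construct an asymptotically co-null sequence by pairing against $w$: for $f\in D(Q)$ and $x$ close to $x_0$, using $f(x)\in D$, $w\in D^*$ and $w\perp\ran(Q(x_0)-z_0)$,
\be
\w{(Q(x)-z_0)f(x),w}=\w{(Q(x)-Q(x_0))f(x),w}=\w{f(x),(Q(x)^*-Q(x_0)^*)w} ,
\ee
whose modulus is bounded by $\|f(x)\|\,\eta(x)$ with $\eta(x):=\|(Q(x)^*-Q(x_0)^*)w\|\to0$ as $x\to x_0$, by strong continuity of $\{Q(x)^*\}$ at the fixed vector $w$. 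Taking $\psi_k$ supported in $B(x_0,r_k)$ with $\|\psi_k\|_{L^2(X)}=1$ and $\sup_{B(x_0,r_k)}\eta\le1/k$, and $g_k(x):=\psi_k(x)w/\|w\|$, Cauchy--Schwarz gives $\|g_k\|_\vH=1$ and $|\w{(Q-z_0)f,g_k}_\vH|\le(k\|w\|)^{-1}\|f\|_\vH$ for all $f\in D(Q)$. If $Q-z_0$ had a bounded inverse $S$, applying this to $f=Sh$ would force $|\w{h,g_k}_\vH|\le(k\|w\|)^{-1}\|S\|\,\|h\|_\vH$ for all $h$, whence $\|g_k\|_\vH\to0$, a contradiction; so $z_0\in\sigma(Q)$.

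The main obstacle I expect is the bookkeeping attached to the decomposable operator $Q$. In the first inclusion one must make sure the fibrewise inverses genuinely assemble into a bounded two-sided inverse of $Q-z$ on the domain $D(Q)=\{f\in\vH;\ f(x)\in D\ \text{a.e.},\ Q(\cdot)f(\cdot)\in\vH\}$, which relies on the strong continuity of the resolvent just derived and on separability of $H$; and in the reverse inclusion the case in which $Q(x_0)-z_0$ has closed but proper range has to be treated with no decomposition of the adjoint available — this is precisely where the hypothesis that $\{Q(x)^*\}$ is a strongly continuous family with constant domain $D^*$ enters, through the pairing identity above. A routine point to be checked throughout is the measurability of the various vector-valued maps (such as $x\mapsto Q(x)u_k$ and $x\mapsto(Q(x)-z)^{-1}f(x)$), which follows from the strong continuity of the relevant families together with separability of $H$.
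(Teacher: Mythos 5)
Your proof is correct and follows the same basic architecture as the paper's: fibrewise inversion and the uniform bound (\ref{e3.2}) give the inclusion $\sigma(Q)\subset\overline{\cup_x\sigma(Q(x))}$, and for the converse the same dichotomy (approximate kernel for $Q(x_0)-z_0$, or else a deficiency/approximate co-kernel vector) is turned into a localized test function $\phi_k(x)u_k$ or $\psi_k(x)w$ supported near $x_0$. The one place where you take a different route is the co-kernel case: the paper works directly with the adjoint, building $g=\chi(x)v_{n_0}$ with $\|(Q-z)^*g\|<\epsilon$, which tacitly uses that $g$ lies in $D(Q^*)$ and that $Q^*$ acts fibrewise on it. That fact is true and not hard to verify, but you sidestep it entirely by pairing $(Q(x)-z_0)f(x)$ against $w\in\ker(Q(x_0)^*-\overline{z_0})\subset D^*$ and using only the strong continuity of $\{Q(x)^*\}$ at the fixed vector $w$; this buys you a proof that never needs a description of $D(Q^*)$, at the cost of an extra bounded-inverse-contradiction step at the end. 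Your version is also more explicit about measurability and about the strong continuity of $x\mapsto(Q(x)-z)^{-1}$, both of which the paper leaves implicit. Altogether a slightly longer but cleaner variant of the same argument.
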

\pf If $z \not\in \overline{ \cup_{x\in X} \sigma (Q(x))}$, define $R_z : \vH \to \vH$ by $(R_z f)(x) = (Q(x)-z)^{-1}f(x)$, $f\in \vH$. Then
(\ref{e3.2}) shows that $R_z$ is bounded on $\vH$. One can check that $(Q-z)R_z=1$ on $\vH$ and $R_z (Q-z) =1$ on $D(Q)$.
Thus $z$ is in resolvent set of $Q$. This shows that $\sigma(Q) \subset \overline{ \cup_{x\in X} \sigma (Q(x))}$. \\

Conversely, if $z \in \sigma(Q(x_0))$ for some $x_0 \in X$, then either $\| (Q(x_0) -z)u_n\| \to 0$ or
$\| (Q(x_0)-z)^* v_n\| \to 0$ for some sequences $\{u_n\}$, $\{v_n\}$ in $H$ with $\|u_n\| = \| v_n\| =1$. In fact if one had   both $\| (Q(x_0) -z)u\| \ge c \|u\|$ and $\| (Q(x_0) -z)^*v\| \ge c \|v\|$ for some $c>0$ and for all $u \in D$ and $v \in D^*$, $z$ would belong to the resolvent set of $Q(x_0)$.
Since $x \to Q(x)$ is strongly continuous on $D$, for any $\epsilon >0$, we can find $f \in \vH $ in the form $f = \chi(x) u_{n_0}$ (or
$g = \chi(x)v_{n_0} \in \vH$ in the second case $\| (Q(x_0)-z)^* v_n\| \to 0$)  for some $n_0$ and for some numerical function $\chi$ supported in a sufficiently small neighborhood of $x_0$ such that $\|f\|=1$ and  $\| (Q-z)f\|<\epsilon$ (
or  $\|g\|=1$ and  $\| (Q-z)^*g\|<\epsilon$ ). This shows that $z \in \sigma (Q)$. Consequently,
$\sigma(Q) \supset \overline{ \cup_{x\in X} \sigma (Q(x))}$.
\ef

  Remark that if the condition (\ref{e3.2}) is not satisfied, the equality (\ref{eq3.3}) may fail. See Theorem 8.3 of \cite{d4}.

\begin{prop} \label{prop3.2} Let $P_0$ denote the free KFP operator with the maximal domain.
Then one has: $\sigma(P_0) = [0, + \infty[$.
\end{prop}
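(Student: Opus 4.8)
The plan is to apply Theorem~\ref{th3.1} to the direct integral decomposition $\hat P_0 = \int_{\bR^n}^{\oplus} \hat P_0(\xi)\, d\xi$, which reduces the problem to computing $\sigma(\hat P_0(\xi))$ for each fixed $\xi$ and controlling the resolvent uniformly in $\xi$. First I would recall that, as noted in the excerpt, each fiber operator $\hat P_0(\xi) = -\Delta_v + \tfrac14\sum_j (v_j + 2i\xi_j)^2 - \tfrac n2 + |\xi|^2$ is a complex-shifted harmonic oscillator of the type analyzed in Appendix~A. The operator $-\Delta_v + \tfrac14\sum_j(v_j + 2i\xi_j)^2 - \tfrac n2$ is unitarily equivalent, via the complex translation $v \mapsto v - 2i\xi$ (or by a direct eigenfunction computation using shifted Hermite functions), to the standard harmonic oscillator $-\Delta_v + \tfrac14|v|^2 - \tfrac n2$, whose spectrum is $\{|\alpha| : \alpha \in \bN^n\} = \{0,1,2,\dots\}$; the analyticity of the family and the fact that the shift does not change the spectrum of an operator with compact resolvent should be taken from Appendix~A. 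Hence $\sigma(\hat P_0(\xi)) = \{k + |\xi|^2 : k \in \bN\}$, and consequently $\overline{\bigcup_{\xi\in\bR^n} \sigma(\hat P_0(\xi))} = \overline{\bigcup_{\xi}\{k+|\xi|^2 : k\in\bN\}} = [0,+\infty[$.

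The remaining ingredient is the uniform resolvent bound \eqref{e3.2}: for every $z \notin [0,+\infty[$ one needs $\sup_{\xi\in\bR^n}\|(\hat P_0(\xi)-z)^{-1}\| < +\infty$. For a fixed $z$ with $\operatorname{dist}(z,[0,+\infty[) = d > 0$, write $\hat P_0(\xi) - z = (\hat P_0(\xi) - |\xi|^2) - (z - |\xi|^2)$. Since $\hat P_0(\xi) - |\xi|^2$ is the complex-shifted harmonic oscillator whose spectrum is $\bN$ uniformly in $\xi$, Appendix~A should provide a bound $\|(\hat P_0(\xi) - |\xi|^2 - w)^{-1}\| \le C(\operatorname{dist}(w,\bN))$ uniform in $\xi$ (this is exactly the sort of quantitative estimate with respect to the translation parameter announced in the description of Appendix~A). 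Applying this with $w = z - |\xi|^2$ gives $\|(\hat P_0(\xi)-z)^{-1}\| \le C(\operatorname{dist}(z-|\xi|^2,\bN)) \le C(d)$ for all $\xi$, since translating $z$ by the real number $-|\xi|^2$ keeps it at distance $\ge d$ from the real half-line and hence at distance $\ge d$ from $\bN \subset [0,+\infty[$. One also checks the hypotheses of Theorem~\ref{th3.1}: $\xi \mapsto \hat P_0(\xi)$ and $\xi \mapsto \hat P_0(\xi)^*$ are strongly continuous on the common domain $D = D(-\Delta_v + |v|^2) \subset L^2(\bR^n_v)$ (the shift $2i\xi\cdot v$ is a lower-order perturbation depending continuously on $\xi$), so Theorem~\ref{th3.1} applies and yields $\sigma(\hat P_0) = [0,+\infty[$. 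Finally, $P_0$ and $\hat P_0$ are unitarily equivalent via the Fourier transform $\vF_{x\to\xi}$, so $\sigma(P_0) = \sigma(\hat P_0) = [0,+\infty[$.

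The main obstacle is the uniform resolvent estimate \eqref{e3.2}: although each fiber has spectrum $\bN$, the fiber operators are non-selfadjoint with numerical range filling the whole right half-plane, so the resolvent norm is \emph{not} controlled by the reciprocal of the distance to the spectrum in any naive way, and one must genuinely use the structure of the complex-shifted harmonic oscillator (its explicit diagonalization by shifted Hermite functions, or the quantitative pseudospectral bounds of Appendix~A) to get a bound that is uniform as $|\xi| \to \infty$. This is precisely why the appendix on the parameter-dependent non-selfadjoint harmonic oscillators is needed before this Proposition can be established.
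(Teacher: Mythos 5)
Your overall scheme is the paper's: reduce via Theorem~\ref{th3.1} and the Fourier direct integral, compute $\sigma(\hat{P}_0(\xi)) = \{k+|\xi|^2:\ k\in\bN\}$ from Lemma~\ref{lem2.2}, check strong continuity of $\xi\mapsto\hat P_0(\xi)$ and its adjoint on the common domain, and then verify the uniform resolvent bound~\eqref{e3.2}. The first pieces you have right; the verification of~\eqref{e3.2} has a genuine gap.

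The estimate you claim Appendix~A supplies, namely
\[
\sup_{\xi\in\bR^n}\bigl\|\bigl(\hat{P}_0(\xi)-|\xi|^2-w\bigr)^{-1}\bigr\|\ \leq\ C\bigl(\mathrm{dist}(w,\bN)\bigr),
\]
is not in Appendix~A and is in fact false. Appendix~A gives the explicit projection norms~\eqref{norm} and the semigroup estimate of Corollary~\ref{cor2.4}. The former shows $\|\Pi_k^\xi\|\geq e^{2\xi^2}$, so the residue of $(\hat P_0(\xi)-|\xi|^2-w)^{-1}$ at each pole $w=k\in\bN$ has operator norm at least $e^{2\xi^2}$, which already precludes any $\xi$-uniform bound in terms of $\mathrm{dist}(w,\bN)$ alone. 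More explicitly, the rescaling used in the proof of Proposition~\ref{prop3.8} sends a fixed $w$ with $\Re w\geq 0$ to a semiclassical spectral parameter $z'=1+O(|\xi|^{-2})$ with Planck constant $h=|\xi|^{-2}$, and $z'=1$ lies in the interior of the semiclassical pseudospectrum of $-h^2\Delta_v+\tfrac{v^2}{4}+iv_1$, where the resolvent norm grows like $e^{c/h}=e^{c|\xi|^2}$. Your own closing paragraph correctly identifies this pseudospectral obstruction; the lemma you invoke just before it would contradict it. In your actual application you take $w=z-|\xi|^2$, so for $|\xi|$ large you happen to land in the left half-plane where accretivity of $\hat P_0(\xi)$ saves you --- but the argument as written does not make that split, and the blanket lemma it leans on is wrong.

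What the paper actually does for~\eqref{e3.2}, for each fixed $z\notin[0,+\infty[$, is a two-regime argument in $\xi$. When $|\xi|^2\leq |\Re z|+1$ (a compact set), the holomorphic type-(A) dependence of $\hat P_0(\xi)$ on $\xi$ gives local boundedness of $\xi\mapsto(\hat P_0(\xi)-z)^{-1}$ (Kato), hence a uniform bound over the compact. When $|\xi|^2>|\Re z|+1$, one writes the resolvent as $\int_0^\infty e^{-t(\hat P_0(\xi)-z)}\,dt$, splits at $T=3$, uses the accretivity bound $\|e^{-t\hat P_0(\xi)}\|\leq 1$ on $[0,T]$, and applies Corollary~\ref{cor2.4} on $[T,\infty)$ to get $\|(\hat P_0(\xi)-z)^{-1}\|\leq C+C/(\xi^2-\Re z)$ with $C=C(\Re z)$ independent of $\xi$. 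The single clean uniform bound for the shifted oscillator that you posit does not exist; this compact/large-$\xi$ split is what must replace it.
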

\pf We want to apply Theorem \ref{th3.1}.
 One sees that $P_0$ is unitarily equivalent with $\hat{P_0}$ which is a direct integral of a family of operators
 $\{  \hat{P_0}(\xi); \xi \in \bR^n \}$ with constant domain $D$. Lemma \ref{lem2.2} shows that
 \[
  \cup_{\xi\in \bR^n } \sigma(\hat{P_0}(\xi)
  ) = [0, + \infty[.
 \]
 It is clear that $ D( \hat{P_0}(\xi)^*) =D$  and $x \to \hat{P_0}(\xi) $ and  $x \to \hat{P_0}(\xi)^* $ are strongly continuous on $D$. To apply Theorem \ref{th3.1} to show that $\sigma(P_0) = [0, + \infty[$, it remains to check the condition (\ref{e3.2}):  for each $z \not\in [0, +\infty[$, there exists some constant $C_z$ such that
 \be \label{eq2.11a}
 \|(\hat{P_0}(\xi) -z)^{-1}\| \le C_z
 \ee
 uniformly in $\xi \in \bR^n$.  For $\xi$ in a compact, this follows from the fact that since $\hat{P_0}(\xi) $ forms a holomorphic family of type (A) in sense of Kato, the resolvent $(\hat{P_0}(\xi) -z)^{-1}$ is locally bounded in $\xi \in \bR^n$ for each $z\not\in [0, + \infty[$ (\cite{Kat80}).   For $|\xi|$ large ($|\xi|^2 >|\Re z| +1$), using the representation
\[
(\hat{P_0}(\xi) -z)^{-1}  = - \int_0^T e^{-t(\hat{P_0}(\xi) -z)} \; dt -  \int_T^\infty e^{-t(\hat{P_0}(\xi) -z)} \; dt
\]
with $T \ge 3$  fixed, one deduces from Corollary \ref{cor2.4}  that there exists $ C= C(\Re z)$ independent of $\xi$   such that
\be
\| (\hat{P_0}(\xi) -z)^{-1} \| \le C + \f{C}{\xi^2- \Re z}
\ee
for $\xi^2 > |\Re z| +1$.
 This proves (\ref{eq2.11a}) which finishes the proof of Proposition \ref{prop3.2}.
\ef

 From Proposition \ref{prop2.3} in Appendix A on a family of non-selfadjoint harmonic oscillators, one can deduce some time-decay estimates for  $e^{-t P_0}$  in appropriate spaces. Denote
\[
\vL^{2,s}(\bR^{2n})=L^2(\bR^{2n}; \langle x\rangle^{2s}dxdv).
\]
and
\[
\vL^p = L^p(\bR^n_x; L^2(\bR^n_v)), \quad p \ge 1,
\]
equipped with their natural norms.\\

\begin{theorem}\label{th3.3}

(a). One has the following dispersive type estimate:  $\exists C>0$ such that
\be
\|e^{-tP_0}u\|_{\vL^\infty}\leq \f{C}{t^\f{n}{2}}\|u\|_{\vL^1}, \quad t\ge 3,
\ee
for $u \in \vL^1$.

(b). For $s>\f{n}{2}$, one has for some $C_s>0$
\be
\|e^{-tP_0}u\|_{\vL^{2,-s}}\leq \f{C_s}{t^\f{n}{2}}\|u\|_{\vL^{2,s}},
\ee
for   $t\ge 3$  and $u\in L^{2,s}$.
\end{theorem}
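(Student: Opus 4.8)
The plan is to diagonalize $P_0$ by the partial Fourier transform $\vF_{x\to\xi}$ and to reduce both estimates to fiberwise bounds on the non-selfadjoint harmonic oscillators $\hat P_0(\xi)=-\Dl_v+\f14|v|^2+iv\cdot\xi-\f n2$ studied in Appendix A. Completing the square, $\f14\sum_{j}(v_j+2i\xi_j)^2=\f14|v|^2+iv\cdot\xi-|\xi|^2$, shows that $\hat P_0(\xi)$ is conjugate, through a complex translation in $v$, to $-\Dl_v+\f14|v|^2-\f n2+|\xi|^2$, so its eigenvalues are $|\xi|^2+m$, $m=0,1,2,\dots$ (see Lemma \ref{lem2.2}), while the associated biorthogonal spectral projections $\Pi_m(\xi)$ have $L^2(\bR^n_v)$-operator norms that grow like $e^{2|\xi|^2}$; this last point is precisely why the contraction bound $\|e^{-t\hat P_0(\xi)}\|\le1$ cannot decay in $\xi$ for small $t$. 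What I would take from Proposition \ref{prop2.3} and Corollary \ref{cor2.4} is that, after the discrete decay $e^{-tm}$ of the oscillator modes has absorbed this growth, one gets a \emph{uniform} fiberwise estimate of the form
\be \label{eqfiber}
\|e^{-t\hat P_0(\xi)}\|\le C\,e^{-(t-2)|\xi|^2},\qquad t\ge3,\ \xi\in\bR^n,
\ee
the norm being that of $L^2(\bR^n_v)$. The only consequence of (\ref{eqfiber}) used below is $\int_{\bR^n}\|e^{-t\hat P_0(\xi)}\|\,d\xi\le C\,t^{-n/2}$ for $t\ge3$; the rate $t^{-n/2}$ is exactly the size of this Gaussian integral in the $x$-frequencies, which is the heat-kernel mechanism announced in the introduction.

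For part (a) I would use the direct-integral representation of the semigroup. Since $\vF_{x\to\xi}P_0\vF_{x\to\xi}^{-1}$ is the direct integral of the $\hat P_0(\xi)$ and, as in the proof of Proposition \ref{prop3.2}, $(P_0+\la)^{-1}$ is the direct integral of $(\hat P_0(\xi)+\la)^{-1}$ for $\la>0$, the same holds for the semigroups (pass from resolvents to $e^{-tP_0}$ by the Yosida approximation), so that for $u\in\vL^1\cap L^2$
\be
(e^{-tP_0}u)(x,v)=\f{1}{(2\pi)^n}\int_{\bR^n}e^{ix\cdot\xi}\big(e^{-t\hat P_0(\xi)}\hat u(\xi,\cdot)\big)(v)\,d\xi .
\ee
Taking the $L^2(\bR^n_v)$-norm, applying Minkowski's integral inequality together with (\ref{eqfiber}), and using the trivial uniform bound $\|\hat u(\xi,\cdot)\|_{L^2(\bR^n_v)}=\big\|\int e^{-iy\cdot\xi}u(y,\cdot)\,dy\big\|_{L^2(\bR^n_v)}\le\|u\|_{\vL^1}$, I obtain
\be
\|(e^{-tP_0}u)(x,\cdot)\|_{L^2(\bR^n_v)}\le\f{C}{(2\pi)^n}\,\|u\|_{\vL^1}\int_{\bR^n}e^{-(t-2)|\xi|^2}\,d\xi\le C'\,t^{-n/2}\,\|u\|_{\vL^1}
\ee
uniformly in $x\in\bR^n$ and $t\ge3$; since the left-hand side is continuous in $x$, the $\vL^\infty$-norm is a genuine supremum and (a) follows, extending to all $u\in\vL^1$ by density.

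Part (b) is then a soft corollary of (a) through two continuous embeddings that hold exactly when $s>\f n2$, this being equivalent to $\w{x}^{-s}\in L^2(\bR^n_x)$. Writing $g(x)=\|u(x,\cdot)\|_{L^2(\bR^n_v)}$, Cauchy--Schwarz gives $\|u\|_{\vL^1}=\|g\|_{L^1(\bR^n_x)}\le\|\w{x}^{-s}\|_{L^2(\bR^n_x)}\|\w{x}^{s}g\|_{L^2(\bR^n_x)}=C_s\|u\|_{\vL^{2,s}}$, i.e.\ $\vL^{2,s}\hookrightarrow\vL^1$, and likewise $\|u\|_{\vL^{2,-s}}=\|\w{x}^{-s}g\|_{L^2(\bR^n_x)}\le\|\w{x}^{-s}\|_{L^2(\bR^n_x)}\|g\|_{L^\infty(\bR^n_x)}=C_s\|u\|_{\vL^\infty}$, i.e.\ $\vL^\infty\hookrightarrow\vL^{2,-s}$. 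Composing $\vL^{2,s}\hookrightarrow\vL^1\xrightarrow{e^{-tP_0}}\vL^\infty\hookrightarrow\vL^{2,-s}$ and inserting the bound from (a) yields (b).

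I expect the only genuine difficulty to be hidden in (\ref{eqfiber}), i.e.\ in Appendix A: controlling the non-normality, namely showing that the $e^{2|\xi|^2}$ growth of the projections $\Pi_m(\xi)$ is dominated once the modes $e^{-tm}\Pi_m(\xi)$ are summed, which is what forces the restriction $t\ge3$ (a dispersive bound valid for small $t$ would contradict $\|\Pi_0(\xi)\|\asymp e^{2|\xi|^2}$). Granting (\ref{eqfiber}), everything above is routine, and the restriction $t\ge3$ is harmless for the applications in Section 4, where the semigroup near $t=0$ is handled by the contraction property.
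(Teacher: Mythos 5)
Your proof follows essentially the same route as the paper: diagonalize by $\vF_{x\to\xi}$, use the fiberwise semigroup bound from Corollary \ref{cor2.4}, pass from $\vL^1_\xi$ to $\vL^\infty$ by Minkowski, and compute the Gaussian $\xi$-integral to produce $t^{-n/2}$; your explicit Cauchy--Schwarz derivation of (b) from (a) is precisely what the paper's terse ``(b) is a consequence of (a)'' means. One small overclaim: your displayed fiberwise estimate $\|e^{-t\hat P_0(\xi)}\|\le Ce^{-(t-2)|\xi|^2}$ is slightly stronger than Corollary \ref{cor2.4}, whose exponent is $t-2-\f{4}{e^t-1}$ rather than $t-2$, and the surplus factor $e^{4|\xi|^2/(e^t-1)}$ cannot be absorbed into a $\xi$-uniform constant, so that estimate as written is not literally true. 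It is harmless, though, because for $t\ge 3$ one has $t-2-\f{4}{e^t-1}\ge c_0 t>0$, so the only consequence you actually use, $\int_{\bR^n}\|e^{-t\hat P_0(\xi)}\|\,d\xi\le C\,t^{-n/2}$, remains valid.
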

\pf For $u \in \vS(\bR^{2n}_{x,v})$, we denote by $\hat u$ the Fourier transform of $u$ in $x$-variables.
Then one has
\be
\| e^{-tP_0} u\|_{\vL^\infty}  \le  \frac{1}{(2\pi)^n} \|  e^{-t \hat{P}_0(\xi) } \hat{u}\|_{\vL^1_\xi}
\ee
Here  $\vL_\xi^1$ denotes the space $L^1(\bR^n_\xi; L^2(\bR^n_v))$. 
Corollary  \ref{cor2.4} gives for any $\xi \in \bR^n$
\be
 \|  e^{-t \hat{P}_0(\xi) } \hat{u}\|_{L^2_v} \le  \f{e^{-\xi^2(t-2 - \f{4}{e^t-1}) }}{(1-e^{-t})^n} \|\hat{u}(\xi, \cdot)\|_{L^2_v}.
\ee
Since  $t-2 - \f{4}{e^t-1} \ge c_0 t>0$  for some $c_0>0$ when $t \ge 3$, one obtains that  
\bea
 \|  e^{-t \hat{P}_0(\xi) } \hat{u}\|_{\vL^1_\xi} &\le  & \int_{\bR^n_\xi}\f{e^{-\xi^2(t-2 - \f{4}{e^t-1}) }}{(1-e^{-t})^n} d\xi \|u\|_{\vL^1} \nonumber \\
&\le & C t^{-\f n 2} \|u\|_{\vL^1}
\eea
for $t \ge 3$ and for any $u \in \vS$. An argument of density proves (a) for any $u \in \vL^1$.
\\

 Theorem \ref{th3.3} (b) is a consequence of Theorem \ref{th3.3}  (a).
\ef

For a symbol $a(x,v; \xi, \eta)$, denote by $a^w(x,v, D_x, D_v)$ the associated Weyl pseudo-differential operator defined by
\bea
\lefteqn{a^w(x,v,D_x,D_v)u(x,v)} \\[2mm]
& =& \f{1}{(2\pi)^{2n}} \int\int e^{i(x-x')\cdot\xi + i(v-v')\cdot\eta} a(\f{x+x'}2, \f{v+v'}2, \xi,\eta) u(x'v') dx'dv'd\xi d\eta \nonumber 
\eea
for $u \in \vS(\bR^{2n}_{x,v})$. 
\\

The following Proposition is a spectral decomposition for the semigroup $e^{-t P_0}$ which follows from Proposition \ref{prop2.3} given in Appendix A and some elementary symbolic calculus for Weyl pseudo-differential operators (see \cite{hor}).

\begin{prop} \label{prop3.3a} For any $t>3$, one has
\be\label{expansion1} 
e^{-t P_0 }=\sum_{l=0}^{\i}e^{- l t + (t-2) \Delta_x }p_l^w(v,D_x,D_v), 
\ee
where the series is norm-convergent as   operators in $L^2(\bR^{2n}_{x,v})$ and 
 $p_l(v, \xi,\eta)$ is given by
 \be
 p_l(v, \xi,\eta) = \int_{\bR^n} e^{-i v'\cdot\eta/2}\left(\sum_{|\alpha|=l} e^{-2|\xi|^2} \psi_\alpha( v+ v' + 2i\xi)\psi_\alpha( v- v' + 2i\xi) \right) dv',  \quad l\in \bN.
 \ee
In particular, 
\be
p_0(v, \xi,\eta) = 2^{\f n2}e^{-v^2 -\eta^2 + 2iv\cdot\xi }.
\ee
\end{prop}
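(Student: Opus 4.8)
The plan is to work on the $x$-Fourier side, expand each fibre semigroup $e^{-t\hat P_0(\xi)}$ by the spectral resolution of Appendix~A, split off a Gaussian in $\xi$ to produce the heat factor $e^{(t-2)\Dl_x}$, recognise what is left as a Weyl quantization in the $v$-variables with $\xi=D_x$ frozen, and then sum the resulting operator series in norm. Since $P_0=\vF_{x\to\xi}^{-1}\hat P_0(\xi)\vF_{x\to\xi}$, one has $e^{-tP_0}=\vF_{x\to\xi}^{-1}e^{-t\hat P_0(\xi)}\vF_{x\to\xi}$, the direct integral over $\xi\in\bR^n$ of the fibre semigroups. Completing the square, $\hat P_0(\xi)=|\xi|^2-\f n2-\Dl_v+\f14(v+2i\xi)^2$, so $\hat P_0(\xi)-|\xi|^2$ is conjugate, via the formal complex translation $v\mapsto v+2i\xi$, to the harmonic oscillator $H=-\Dl_v+\f14|v|^2-\f n2$ whose real $L^2$-normalised Hermite eigenfunctions satisfy $H\psi_\alpha=|\alpha|\psi_\alpha$. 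Proposition~\ref{prop2.3} furnishes the corresponding spectral resolution of $\hat P_0(\xi)$: its spectrum is the pure point set $\{|\xi|^2+l;\ l\in\bN\}$, the rank-$\binom{n+l-1}{n-1}$ Riesz projection $\Pi_l(\xi)$ at $|\xi|^2+l$ has integral kernel $\sum_{|\alpha|=l}\psi_\alpha(v+2i\xi)\psi_\alpha(v'+2i\xi)$ (its biorthogonal partners being the eigenfunctions $\psi_\alpha(\cdot-2i\xi)$ of $\hat P_0(\xi)^*=\hat P_0(-\xi)$, which pair correctly with the $\psi_\alpha(\cdot+2i\xi)$ after a contour shift using the Gaussian decay of Hermite functions in horizontal strips), and, for $t>0$,
\be\label{specfib}
e^{-t\hat P_0(\xi)}=e^{-t|\xi|^2}\sum_{l=0}^\infty e^{-lt}\,\Pi_l(\xi),
\ee
the series converging in $\vL(L^2(\bR^n_v))$ with quantitative $\xi$-dependence given by Appendix~A.

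Next I would write $e^{-t|\xi|^2}=e^{-(t-2)|\xi|^2}\,e^{-2|\xi|^2}$ and put $q_l(\xi):=e^{-2|\xi|^2}\Pi_l(\xi)$. In the kernel of $\Pi_l(\xi)$ the Gaussian parts of the two translated Hermite functions combine into a factor $e^{2|\xi|^2}$, exactly cancelled by $e^{-2|\xi|^2}$, so $q_l(\xi)$ is an integral operator on $L^2(\bR^n_v)$ with kernel Gaussian-in-$(v,v')$ times a polynomial; this is precisely why the exponent in (\ref{expansion1}) is $t-2$ and why $p_l$ carries the explicit $e^{-2|\xi|^2}$. Conjugating (\ref{specfib}) by $\vF_{x\to\xi}$ turns $e^{-(t-2)|\xi|^2}$ into the heat semigroup $e^{(t-2)\Dl_x}$, a contraction for $t\ge2$, giving formally $e^{-tP_0}=\sum_l e^{-lt}\,e^{(t-2)\Dl_x}\,\vF_{x\to\xi}^{-1}q_l(\xi)\vF_{x\to\xi}$. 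Computing, for each fixed $\xi$, the Weyl symbol of $q_l(\xi)$ in the $(v,\eta)$-variables from its kernel (with the normalisation of the Weyl calculus recalled above) reproduces $p_l(v,\xi,\eta)$; since $q_l$ does not depend on $x$ and enters only through the parameter $\xi$, which becomes $D_x$ under the $x$-Fourier conjugation, one gets $\vF_{x\to\xi}^{-1}q_l(\xi)\vF_{x\to\xi}=p_l^w(v,D_x,D_v)$. For $l=0$ the single Hermite function is the Gaussian $\psi_0$, and one Gaussian integration in $v'$ gives $p_0(v,\xi,\eta)=2^{\f n2}e^{-v^2-\eta^2+2iv\cdot\xi}$.

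It remains to prove the norm-convergence of (\ref{expansion1}), which then also justifies the interchange of sum and semigroup and identifies the sum with $e^{-tP_0}$. The $l$-th term of (\ref{expansion1}) has operator norm on $L^2(\bR^{2n})$ equal to $e^{-lt}\sup_{\xi\in\bR^n}e^{-(t-2)|\xi|^2}\|q_l(\xi)\|_{L^2_v\to L^2_v}$. The decisive quantitative input, from Appendix~A, is a bound on $\|q_l(\xi)\|_{L^2_v\to L^2_v}$ by a polynomial in $\xi$ of degree $\le 2l$ (coming from the polynomial parts of the complex-translated Hermite functions) with coefficients controlled in $l$, such that, weighed against the Gaussian $e^{-(t-2)|\xi|^2}$, one obtains $\sup_\xi e^{-(t-2)|\xi|^2}\|q_l(\xi)\|\le C\,d_l\,A_t^{\,l}$ with $d_l=\binom{n+l-1}{n-1}$ and $A_t\to1$ as $t\to+\infty$. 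Then the $l$-th term is $O(d_l\,\rho_t^{\,l})$ with $\rho_t=A_t\,e^{-t}$, and $\sum_l d_l\,\rho_t^{\,l}<\infty$ as soon as $\rho_t<1$, which holds for $t>3$; the partial sums of (\ref{expansion1}) therefore form a Cauchy sequence in $\vL(L^2(\bR^{2n}))$ whose limit, by (\ref{specfib}), is $e^{-tP_0}$. The main obstacle is exactly this $\xi$-uniform estimate: one must control at once the compensation of the $e^{2|\xi|^2}$ blow-up of the complex-translated Hermite functions by $e^{-2|\xi|^2}$ and the growth in $l$ of the surviving $\xi$-polynomial, so that the geometric factor $e^{-lt}$ with $t>3$ wins — this being the quantitative content of Appendix~A invoked here. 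The $\xi$-fibre reduction and the Weyl-calculus bookkeeping with the frozen parameter $\xi=D_x$ are routine.
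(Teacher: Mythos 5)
Your proposal is correct and follows essentially the same route as the paper: Fourier-fibre decomposition, the Appendix A spectral resolution of $\hat P_0(\xi)$ from Proposition \ref{prop2.3}, splitting $e^{-t|\xi|^2}=e^{-(t-2)|\xi|^2}e^{-2|\xi|^2}$ so that the first factor becomes $e^{(t-2)\Delta_x}$ and the second cancels the $e^{2|\xi|^2}$ growth of the complex-translated Hermite kernels, identification of $e^{-2|\xi|^2}\Pi_l^\xi$ with $p_l^w(v,D_x,D_v)$ by elementary Weyl calculus, and the quantitative bounds of Lemma \ref{lem2.3} for the norm convergence when $t>3$. Your term-by-term estimate $\sup_\xi e^{-(t-2)|\xi|^2}\|q_l(\xi)\|\le C\,d_l\,A_t^l$ with $A_t e^{-t}<1$ is in fact the cleaner way to establish $\sum_l\sup_\xi(\cdots)<\infty$, which the paper obtains somewhat loosely by citing the fixed-$\xi$ bound (\ref{eq2.10}) in which sup and sum appear in the other order.
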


 We just indicate that for $t>3$, one has $t-2 - \f{4}{e^t-1} >0$ and
 \be
e^{ (t-2) \Delta_x }p_l^w(v,D_x,D_v)  = e^{t\Delta_x} \Pi_l^{D_x}
\ee 
 is a bounded operator, where $\Pi_l^\xi$ is the Riesz projection of $\hat{P}_0(\xi)$ associated with the eigenvalue $E_l = l+ |\xi|^2$. See Lemma \ref{lem2.2} in Appendix A. The norm-convergence as  operators in $L^2(\bR^{2n}_{x,v})$ of the right-hand side of (\ref{expansion1}) follows from (\ref{eq2.10}) which gives:
\[
\sum_{l=0}^{\i}\| e^{- l t + (t-2) \Delta_x }p_l^w(v,D_x,D_v)\| \le \f{1}{(1-e^{-t})^n}, \quad t >3. 
\]
The detail of the proof is omitted. As an application of this spectral decomposition (\ref{expansion1}), we can establish the following
 result on large-time approximation of solutions to the free KFP equation.  \\
 
\begin{prop}\label{prop3.4} There exists $C>0$ such that 
\be
\|e^{-tP_0}u -e^{(t-2)\Delta_x} p_0^w u\|_{\vL^\infty}\leq C \f{e^{-t}}{t^\f{n}{2}}\|u -e^{-2 \Delta_x}p_0^w u\|_{\vL^1},
\ee
for $t \ge 3$ and for any $u\in \vL^1$ with $e^{-2 \Delta_x}u \in \vL^1$.
\end{prop}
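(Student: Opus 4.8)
The plan is to rewrite the difference $e^{-tP_0}u-e^{(t-2)\Delta_x}p_0^w u$ as $e^{-tP_0}$ acting on the spectral subspace of $P_0$ complementary to the lowest mode, on which an extra factor $e^{-t}$ is available, and then to repeat the $\vL^1\to\vL^\infty$ argument used for Theorem~\ref{th3.3}. First I would introduce the projection $\Pi_0^{D_x}$, acting in the Fourier variable $\xi$ dual to $x$ as the (rank-one) Riesz projection $\Pi_0^\xi$ of $\hat{P}_0(\xi)$ onto its lowest eigenvalue $|\xi|^2$. Taking $t=2$ in the identity $e^{(t-2)\Delta_x}p_l^w=e^{t\Delta_x}\Pi_l^{D_x}$ recalled after Proposition~\ref{prop3.3a} gives $p_0^w=e^{2\Delta_x}\Pi_0^{D_x}$, hence $\Pi_0^{D_x}=e^{-2\Delta_x}p_0^w=p_0^w e^{-2\Delta_x}$ and $u-e^{-2\Delta_x}p_0^w u=(I-\Pi_0^{D_x})u$. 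Since $\hat{P}_0(\xi)$ commutes with $\Pi_0^\xi$ and $\hat{P}_0(\xi)\Pi_0^\xi=|\xi|^2\Pi_0^\xi$, one has $e^{-tP_0}\Pi_0^{D_x}=e^{t\Delta_x}\Pi_0^{D_x}=e^{(t-2)\Delta_x}p_0^w$, so that
\[
e^{-tP_0}u-e^{(t-2)\Delta_x}p_0^w u=e^{-tP_0}w,\qquad w:=u-e^{-2\Delta_x}p_0^w u=(I-\Pi_0^{D_x})u.
\]
Computing the Schwartz kernel of $p_0^w$ from the formula for $p_0$ — it is a Gaussian in $(v,v')$ times $\delta(x-x'+v+v')$ — a Schur-test argument shows $p_0^w$ is bounded on $\vL^1$; hence, under the hypotheses $u,e^{-2\Delta_x}u\in\vL^1$, one has $w=u-p_0^w(e^{-2\Delta_x}u)\in\vL^1$, so the right-hand side of the proposition is finite.

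It then remains to prove the dispersive estimate $\|e^{-tP_0}w\|_{\vL^\infty}\le C e^{-t}t^{-n/2}\|w\|_{\vL^1}$ for $t\ge3$ and $w=(I-\Pi_0^{D_x})w\in\vL^1$. As in the proof of Theorem~\ref{th3.3}(a), I would pass to the Fourier transform in $x$ and use $\|\hat{w}(\xi,\cdot)\|_{L^2_v}\le\|w\|_{\vL^1}$ to get
\[
\|e^{-tP_0}w\|_{\vL^\infty}\le\frac{\|w\|_{\vL^1}}{(2\pi)^n}\int_{\bR^n}\|e^{-t\hat{P}_0(\xi)}(I-\Pi_0^\xi)\|_{L^2_v\to L^2_v}\,d\xi.
\]
Writing $\hat{P}_0(\xi)=|\xi|^2+K(\xi)$ with $K(\xi)=-\Delta_v+\f14(v+2i\xi)^2-\f n2$ a complex-translated harmonic oscillator of spectrum $\bN$, one has $e^{-t\hat{P}_0(\xi)}(I-\Pi_0^\xi)=e^{-t|\xi|^2}\sum_{l\ge1}e^{-tl}\Pi_l^\xi$, whose smallest surviving eigenvalue is $1+|\xi|^2$. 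The quantitative semigroup estimates of Proposition~\ref{prop2.3} in Appendix~A, applied on the range of $I-\Pi_0^\xi$, then yield a bound of the form $\|e^{-t\hat{P}_0(\xi)}(I-\Pi_0^\xi)\|_{L^2_v\to L^2_v}\le C e^{-t}e^{-c_0 t|\xi|^2}$, uniformly in $\xi\in\bR^n$ and $t\ge3$, with the gain $e^{-t}$ coming from the spectral gap between the two lowest eigenvalues $|\xi|^2$ and $1+|\xi|^2$ and with $c_0>0$ the same constant as in the proof of Theorem~\ref{th3.3}(a) (from $t-2-4/(e^t-1)\ge c_0 t$). Integrating $e^{-c_0 t|\xi|^2}$ over $\bR^n$ produces the factor $t^{-n/2}$, and the asserted inequality follows; the passage from Schwartz data to general $u,e^{-2\Delta_x}u\in\vL^1$ is by density, exactly as for Theorem~\ref{th3.3}(a).

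The step I expect to be the main obstacle is the last uniform-in-$\xi$ estimate: one has to show that removing the ground state of $\hat{P}_0(\xi)$ gains a \emph{full} factor $e^{-t}$, while the non-self-adjoint Riesz projections $\Pi_l^\xi$ ($l\ge1$) and the operator series $\sum_{l\ge1}e^{-tl}\Pi_l^\xi$ remain controlled uniformly in $\xi$. For $\xi=0$ this reduces to the elementary identity $\|\sum_{l\ge1}e^{-tl}\Pi_l^0\|=\sup_{l\ge1}e^{-tl}=e^{-t}$, but for $\xi\ne0$ it requires the Mehler-type analysis of the translated harmonic oscillators from Appendix~A; everything else merely repeats the $\vL^1\to\vL^\infty$ argument already used for Theorem~\ref{th3.3}(a).
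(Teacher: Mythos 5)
Your reduction is exactly the one the paper uses: writing $u-e^{-2\Delta_x}p_0^wu=(I-\Pi_0^{D_x})u$, noting $e^{-tP_0}\Pi_0^{D_x}=e^{(t-2)\Delta_x}p_0^w$, checking that $p_0^w$ is bounded on $\vL^1$, and then rerunning the $\vL^1\to\vL^\infty$ argument of Theorem \ref{th3.3}(a) on the complementary spectral subspace. All of that is correct. The one place where your write-up falls short of a proof is precisely the step you yourself flag as the main obstacle: the uniform bound $\|e^{-t\hat P_0(\xi)}(I-\Pi_0^\xi)\|\le Ce^{-t}e^{-c_0t|\xi|^2}$. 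You assert that it is "yielded" by Proposition \ref{prop2.3}, but Proposition \ref{prop2.3} and Corollary \ref{cor2.4} only control the full sum $\sum_{l\ge0}e^{-t(l+\xi^2)}\|\Pi_l^\xi\|\le e^{-\xi^2(t-2-4/(e^t-1))}/(1-e^{-t})^n$, with no $e^{-t}$ gain; and because the Riesz projections are not orthogonal and $\|\Pi_l^\xi\|$ grows exponentially in $l$ for $\xi\neq0$, the "spectral gap" heuristic does not by itself produce the factor $e^{-t}$. Truncating the tail of a known convergent bound is not enough — you must re-sum the tail.

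The missing computation is the one the paper carries out: redo the Mehler-type summation of Lemma \ref{lem2.3} starting at $k=1$. In dimension one,
\[
\sum_{k\ge1}e^{-t(k+\xi^2)}\|\Pi_k^\xi\|=\f{e^{-\xi^2(t-2)-t}}{1-e^{-t}}+\f{e^{-\xi^2(t-2)}}{1-e^{-t}}\sum_{j\ge1}\f1{j!}\Bigl(\f{4\xi^2}{e^t-1}\Bigr)^j,
\]
where the first term carries $e^{-t}$ outright and the second is bounded by $\f{e^{-\xi^2(t-2)}}{1-e^{-t}}\cdot\f{4\xi^2}{e^t-1}e^{4\xi^2/(e^t-1)}$, so the $e^{-t}$ gain in the $j\ge1$ part comes from the factor $4\xi^2/(e^t-1)\lesssim\xi^2e^{-t}$, at the price of a polynomial weight $\xi^2$ which is then absorbed by the Gaussian $e^{-at\xi^2}$. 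Tensorizing gives $\sum_{k\ge1}e^{-t(k+\xi^2)}\|\Pi_k^\xi\|\le C(1+\xi^2)e^{-at\xi^2-t}$ for $t>3$ in any dimension, and integration in $\xi$ then yields the stated $Ce^{-t}t^{-n/2}$. Once you supply this estimate (rather than cite Proposition \ref{prop2.3} for it), your argument coincides with the paper's proof.
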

\pf By a direction calculation, one can check that
\bea
\vF^{-1}_{x\to\xi} \Pi_0^\xi \vF_{x\to\xi} &= &e^{-2 \Delta_x} p_0^w, \\
 \vF^{-1}_{x\to\xi}e^{-t\hat P_0(\xi)} \Pi_0^\xi \vF_{x\to\xi} &=& e^{(t-2)\Delta_x} p_0^w.
\eea
$p_0^w$ is continuous on $\vL^1$.  In fact, let $\tau : u (x, v) \to u(x+2v, v)$. Then one has
\[
p_0^wu(x, v)= \w{\psi_0, \tau u}_{L^2(\bR^n_v)}(x) \psi_0(v),
\]
 where $\psi_0 = \frac{1}{(2\pi)^{\f n 4}}e^{-\f{v^2}{4}}$ is the first eigenfunction of $-\Delta_v + \frac{v^2}{4}$.  It follows that
\[
\|p_0^w u\|_{\vL^1} \le \int_{\bR^{2n}} \psi_0(v)|u(x+ 2v, v)|dxdv  =   \int_{\bR^{2n}} \psi_0(v)|u(y, v)|dydv\le \|u\|_{\vL^1}.
\]
 Denote $u_0 = e^{-2\Delta_x}p_0^wu$ for $u \in \vL^1$ with supp$_\xi \hat u$ compact. The proof of Lemma \ref{lem2.3} shows that if $n=1$, one has
\bea
\lefteqn{\sum_{k=1}^{\i}e^{-t(k+\xi^2) }\| \Pi_k^\xi\| } \nonumber \\
&= &  \sum_{k=1}^{\i}e^{-t(k+\xi^2) + 2\xi^2}\sum_{j=0}^{k}\f{C_k^j}{j!}(4\xi^2)^j  \\
&=& \f{e^{- \xi^2(t-2) -t}}{1-e^{-t}} + \f{e^{-\xi^2(t - 2)}}{1-e^{-t}}\sum_{j=1}^\i\f{1}{j!}(\f{4\xi^2}{e^t-1})^j \nonumber\\
& \le & \f{e^{- \xi^2(t-2) -t}}{1-e^{-t}}  \left( 1 + \f{ 4 \xi^2 e^{\f{4\xi^2}{e^t-1} }}{1 -e^{-t}} \right), \quad t >0.  \nonumber 
\eea
When $n\ge 1$, we deduce from the explicit formula for the Riesz projections $\Pi_k^\xi$ and the above one-dimensional bound that there exist some constants $C$, $a >0$ such that
\be \label{eq2.26a}
\sum_{k=1}^{\i}e^{-t(k+\xi^2) }\| \Pi_k^\xi\|  \le C (1+ \xi^2) e^{- a t \xi^2 -t}, \quad t >3.
\ee
Making use of the above estimate and following the proof of Theorem \ref{th3.3} with $u$ replaced by $u-u_0$, one obtains
\bea
\| e^{-tP_0} (u-u_0)\|_{\vL^\infty} & \le & C\sum_{k=1}^{\infty}\| e^{-t(k+\xi^2)}\Pi_k^\xi ( \hat u - \hat u_0)\|_{\vL^1_\xi} \nonumber  \\
& \le & C '  \frac{e^{-t}}{t^{\f n2}} \| u - u_0\|_{\vL^1}, \quad t>3.
\eea
Since $e^{-tP_0}u_0 = e^{(t-2)\Delta_x} p_0^w u$, an argument of density proves the desired result.
\ef

The time-decay of $e^{-tP_0}$ is governed by the first eigenvalue of the harmonic oscillator in $v$-variables and propagation of energy due to the transport term $v\cdot\nabla_x$. Although this term is of the first order in $\xi$, Theorem \ref{th3.3} shows that solutions to the free KFP equation
decay like those to the heat equation in space variables. A natural question is to see if the results  of  Theorem \ref{th3.3} are still true for
the full KFP operator with a potential $V(x)$ such that $|\na V(x)|$ tends to zero sufficiently rapidly. 
In this work, we prove a result similar to  Theorem \ref{th3.3} (b) through resolvent estimates for the full KFP operator $P$. \\

In order to study the resolvent of $P$, we establish here some limiting absorption principles for the resolvent of $P_0$ and its low-energy asymptotics. Different from the limiting absorption principle for selfadjopint operators, the problem  we want to study here is pseudospectral in nature, because $\bR_+$ is located in the interior of the numerical range of $P_0$. Set $R_0(z) = (P_0 -z)^{-1}$, $\hat R_0(z) = ( \hat P_0-z)^{-1}$ and $\hat R_0(z, \xi) = ( \hat P_0(\xi)-z)^{-1}$ for $z \not \in \bR_+$. Then
$R_0(z) = \vF_{x\to\xi}^{-1} \hat R_0(z) \vF_{x\to\xi}$.  Note that $\hat R_0(z)$ is multiplication in $\xi$-variables by $\hat R_0(z,\xi)$.

\begin{prop}\label{prop3.5}  Let $l\in \bN$ and $ l < a <l +1$ be fixed. Take  $\chi \ge 0$ and $\chi \in C_0^\infty(\bR^n_\xi)$ with supp $\chi \subset \{\xi, |\xi|\le a + 4\}$, $\chi(\xi) =1$ when $|\xi| \le a +3$ and $ 0 \le \chi(\xi) \le 1$.  Then one has
\be \label{R0}
\hat R_0(z, \xi) = \sum_{k=0}^l \chi(\xi) \frac{\Pi_k^\xi}{\xi^2+k -z} + r_l(z,\xi),
\ee
for any $\xi \in \bR^n$ and $z \in \bC$ with $\Re z <a$ and $\Im z \neq 0$. Here $r_l(z, \xi)$ is holomorphic in $z$ with $\Re z < a $ verifying the estimate
\be \label{reste}
\sup_{\Re z <    a, \xi \in \bR^n} \|r_l(z, \xi)\|_{\vL(L^2(\bR^n_v))} <\i.
\ee
\end{prop}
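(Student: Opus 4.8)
The plan is to decompose $\hat R_0(z,\xi)$ according to the spectral subspaces of $\hat P_0(\xi)$, isolating the first $l+1$ eigenvalues $E_k = \xi^2 + k$, $k=0,\dots,l$, and to absorb everything else into a remainder term. Fix $\xi\in\bR^n$ and $z$ with $\Re z < a$ and $\Im z\ne 0$. The operator $\hat P_0(\xi)$ is, by the analysis in Appendix A, a complex translation of the harmonic oscillator in $v$-variables, with discrete spectrum $\{\xi^2+k : k\in\bN\}$ and associated Riesz projections $\Pi_k^\xi$. Writing $Q^\xi_l = I - \sum_{k=0}^l \Pi_k^\xi$ for the complementary spectral projection, we have on $\Ran Q^\xi_l$ the spectral estimate $\dist(z,\sigma(\hat P_0(\xi)|_{\Ran Q^\xi_l})) \ge \xi^2 + l + 1 - \Re z \ge l+1-a > 0$, so that $\hat R_0(z,\xi) Q^\xi_l$ is holomorphic in $z$ for $\Re z < a$ with a uniform bound. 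This suggests defining
\[
r_l(z,\xi) = \hat R_0(z,\xi)Q^\xi_l + \sum_{k=0}^l (1-\chi(\xi))\frac{\Pi_k^\xi}{\xi^2+k-z},
\]
so that the identity \eqref{R0} holds by construction; the work is then entirely in proving the uniform bound \eqref{reste}.

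For the first term, when $|\xi|$ is bounded the family $\hat P_0(\xi)$ is holomorphic of type (A) (as used in Proposition \ref{prop3.2}), so $\hat R_0(z,\xi)Q^\xi_l$ is jointly continuous in $(z,\xi)$ on the compact set $\{\Re z \le a\}\cap\{|\Im z|\le M, |\xi|\le M\}$ union with control at $\Im z\to\pm\infty$ coming from accretivity; for $|\xi|$ large one uses the semigroup representation exactly as in Proposition \ref{prop3.2} together with the spectral decomposition of $e^{-t\hat P_0(\xi)}$ from Appendix A, noting that $e^{-t\hat P_0(\xi)}Q^\xi_l$ decays like $e^{-t(\xi^2+l+1)}$, giving $\|\hat R_0(z,\xi)Q^\xi_l\| \le C/(l+1-a)$ uniformly. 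The simplest clean route is: $\hat R_0(z,\xi)Q^\xi_l = -\int_0^\infty e^{-t(\hat P_0(\xi)-z)}Q^\xi_l\,dt$, and the integrand is bounded by $\|e^{-t\hat P_0(\xi)}Q^\xi_l\|\,e^{t\Re z} \le C e^{-t(\xi^2+l+1-\Re z)}$ for $t\ge 3$ (using the Appendix A bound \eqref{eq2.10}-type estimate, which shows the norm of $e^{-t\hat P_0(\xi)}$ restricted to the high modes is summably small), and by a crude bound for $t\le 3$; this yields \eqref{reste} for the first term with a constant depending only on $a$ and $l$.

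For the second term, the cutoff factor $1-\chi(\xi)$ is supported in $\{|\xi|\ge a+3\}$, where $\xi^2 + k - \Re z \ge (a+3)^2 - a \ge 1$, hence $|\xi^2+k-z|\ge |\xi^2+k-\Re z| \ge 1$ uniformly; combined with the fact that $\|\Pi_k^\xi\|$ grows at most polynomially in $|\xi|$ (explicit from the translated-oscillator projection formula in Appendix A, cf. the bound \eqref{eq2.26a}) against a denominator of size $\gtrsim \xi^2$, each summand is $O(\langle\xi\rangle^{N}/\xi^2)$... which is \emph{not} bounded, so one must be slightly more careful: in fact the correct statement is that $(1-\chi(\xi))\|\Pi_k^\xi\|/|\xi^2+k-z|$ is bounded because the operator-norm growth of $\Pi_k^\xi$ is, after the natural conjugation by the Gaussian weight that diagonalizes the translation, exactly polynomial of degree $2k$ in $|\xi|$ while $|\xi^2+k-z|$ only grows quadratically — so this term is genuinely bounded only for $k\le 1$ unless one exploits a better bound. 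The clean fix, and the step I expect to be the main obstacle, is therefore to extract from Appendix A the sharp bound $\|\Pi_k^\xi\| \le C_k$ uniform in $\xi$ (which holds because $\Pi_k^\xi$ is a \emph{projection} obtained by complex translation of the genuine spectral projection of the self-adjoint oscillator, and the translation parameter $2i\xi$ enters through $e^{-2|\xi|^2}$ factors that compensate the polynomial growth — this is precisely the content of the quantitative Appendix A estimates); granting that, the second term is bounded by $C_l \sum_{k=0}^l 1 < \infty$, finishing the proof. The whole argument hinges on having, from Appendix A, both the uniform-in-$\xi$ resolvent bound on the high-frequency subspace and the uniform-in-$\xi$ bound on the individual Riesz projections $\Pi_k^\xi$.
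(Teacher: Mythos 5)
There is a genuine gap, and it sits exactly where you flagged your own uncertainty. The ``clean fix'' you hope for does \emph{not} exist: from the explicit formula (\ref{norm}) in Appendix A one has, already in dimension one,
\[
\|\Pi_k^\xi\| = \|\psi_k^\xi\|^2 = e^{2\xi^2}\sum_{j=0}^{k}\frac{C_k^j}{j!}(4\xi^2)^j,
\]
which grows like $e^{2|\xi|^2}$. The Riesz projections of $\hat P_0(\xi)$ are genuinely non-orthogonal, and the Gaussian factor $e^{-2|\xi|^2}$ you invoke is \emph{not} present in the operator norm --- it appears in the kernel formula (cf.\ Proposition~\ref{prop3.3a}) but is exactly cancelled when one takes $\|\psi_\alpha^\xi\|\,\|\psi_\alpha^{-\xi}\|$. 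So $\|\Pi_k^\xi\|\le C_k$ uniformly in $\xi$ is false, and your second term $\sum_{k=0}^l(1-\chi(\xi))\Pi_k^\xi/(\xi^2+k-z)$ is not uniformly bounded: it blows up like $e^{2|\xi|^2}/\xi^2$. The same defect infects your first term. The assertion that ``$\mathrm{dist}(z,\sigma(\hat P_0(\xi)|_{\mathrm{Ran}\,Q_l^\xi}))\ge l+1-a$ gives a uniform resolvent bound'' is the spectral-theorem reflex for normal operators; $\hat P_0(\xi)$ is very far from normal when $\xi\ne 0$ (that is the whole point of this paper), and indeed the na\"ive spectral sum $\sum_{k\ge l+1}\Pi_k^\xi/(\xi^2+k-z)$ does not even converge in operator norm. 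In your decomposition of $r_l$ the two pieces each diverge as $|\xi|\to\infty$ and only their sum is bounded, because that sum collapses back to $\hat R_0(z,\xi)$ once $\chi(\xi)=0$.

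The repair is precisely the paper's route, and the order of the two splittings matters. One introduces the $\xi$-cutoff \emph{inside} the Laplace-transform integral $\hat R_0(z,\xi)=\int_0^\infty e^{-t(\hat P_0(\xi)-z)}\,dt$ \emph{before} touching the eigenvalues. On the support of $\chi_1=1-\chi$ (i.e.\ $|\xi|\ge a+3$) one never invokes $\Pi_k^\xi$ at all: accretivity gives $\Re\hat P_0(\xi)\ge\xi^2$, hence $\|e^{-t(\hat P_0(\xi)-z)}\|\le e^{-t(\xi^2-\Re z)}$, and the integral is uniformly bounded with no projections in sight. Only on the support of $\chi$, where $|\xi|\le a+4$ is bounded, does one extract the first $l+1$ modes via $S_1^\xi=\sum_{k=0}^l\Pi_k^\xi$; there the exponential factor $e^{2\xi^2}$ in $\|\Pi_k^\xi\|$ is harmless because $\xi$ lives on a compact set, and Lemma~\ref{lem2.3} gives the summable decay $e^{-(l+1)t}$ for the tail $S_2^\xi=1-S_1^\xi$. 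Your scheme, which projects first and cuts off second, cannot be salvaged without somehow re-creating this cancellation by hand; the paper's scheme avoids having to create it at all.
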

\pf  Let $\chi_1= 1-\chi$. For $\Re z<0$, one has
\bea \label{decom}
\hat R_0(z, \xi)  &=& \int_0^\i  e^{-t(\hat P_0(\xi)-z)} \; dt \nonumber \\
& = & \int_0^\i  \chi_1(\xi) e^{-t(\hat P_0(\xi)-z)} \; dt + \int_0^\i  \chi(\xi) e^{-t(\hat P_0(\xi)-z)} \; dt 
\nonumber\\
& \triangleq  & I_1(z,\xi) + I_2(z,\xi).
\eea
Since $\Re P_0(\xi) \ge 0$, it is clear that for each fixed $T$,  $ \int_0^T  \chi_1(\xi) e^{-t(\hat P_0(\xi)-z)} \; dt$ is uniformly bounded in $\xi$ and $z$ with $\Re z \le a$:
\[
\|\int_0^T  \chi_1(\xi) e^{-t(\hat P_0(\xi)-z)} \; dt \|_{\vB(L^2(\bR_v^n))} \le Te^{aT},
\]
for all $\xi \in \bR^n$. Corollary \ref{cor2.4}  with $T=3$ shows that
\be \label{eq3.17}
\|e^{-t (\hat{P}_0(\xi)-z)}\|_{\vB(L^2(\bR_v^n))}\leq C e^{- t(\xi^2 - 2 - e^{-1} - \Re z)}
\ee
for $t\ge 3$. Since $\chi_1$ is supported in $\xi^2 \ge  a +3$, $ I_1(z,\xi)$ is holomorphic in $z$ with $\Re z <a$ and verifies the estimate (\ref{reste}).   To study $I_2(z,\xi)$, we decompose $e^{-t \hat P_0(\xi)}$ as
\[
e^{-t \hat P_0(\xi)} =  J_1(t,\xi) + J_2(t,\xi),
\]
where $J_j(t, \xi) =  e^{-t \hat P_0(\xi)}S_j^\xi$ with $ S_1^\xi =\sum_{k=0}^l \Pi_k^\xi$ and $S_2^\xi = 1 - S_1^\xi$.
For $\Re z <0$, the contribution of $J_1(t, \xi)$ to $\hat{R}_0(z, \xi)$ is
\[
\int_0^\i e^{tz} J_1(t, \xi) dt = \sum_{k=0}^l \frac{\Pi_k^\xi}{\xi^2 + k -z}.
\]
  By (\ref{norm}), one has for $t \ge T >0$
\be
\|J_2(t,\xi)\|_{\vB(L^2(\bR^n_v))} \le  \sum_{k=l+1}^{\i}e^{-t(k+\xi^2)+2\xi^2}\sum_{j=0}^{k}\f{C_k^j}{j!}(4\xi^2)^j \triangleq J_{21}(t,\xi) + J_{22}(t,\xi)
\ee
where
\begin{eqnarray*}
J_{21}(t,\xi)&=& e^{-\xi^2(t-2)}\sum_{j=0}^{l+1}\f{(4\xi^2)^j}{j!}\sum_{k=l+1}^\i C_k^je^{-tk} \\
J_{22}(t,\xi)&=& e^{-\xi^2(t-2)}\sum_{j=l+2}^{\i}\f{(4\xi^2)^j}{j!}\sum_{k=j}^\i C_k^je^{-tk}.
\end{eqnarray*}
 $J_{21}(t,\xi)$ and $J_{22}(t,\xi)$ can be evaluated as in the proof of
Proposition \ref{prop2.3} (see also the proof of (\ref{eq2.26a}) in the case $l=0$) and we omit the details here. One has for some $C, a>0$ 
\bea
J_{21}(t,\xi) & \le & e^{-\xi^2(t-2) -(l+1) t  }\sum_{j=0}^{l+1}\f{(4\xi^2)^j l!}{j!(1-e^{-t})^{l+1}},  \label{eq3.19}\\
J_{22}(t,\xi) & \le & C e^{-a \xi^2 t -(l+1) t  } \label{eq3.20}
\eea
for $t \ge T$. Since $|\xi|$ is bounded on the support of $\chi$, this implies that there exists some constant $C$  such that
\be
\|J_2(t, \xi)\|_{\vB(L^2(\bR_v^n))} \le C e^{ -(l+1)t}
\ee
uniformly in $\xi \in supp \chi$ and $t\ge T$.  We obtain a  decomposition for $\hat R_0(z, \xi) $ when $\Re z <0$:
\be \label{R00}
\hat R_0(z, \xi) = \sum_{k=0}^l \chi(\xi)  \frac{\Pi_k^\xi}{\xi^2+k -z} + r_l(z,\xi),
\ee
where
\[
r_l(z, \xi) = I_1(z, \xi) + \int_0^\i e^{tz}J_2(t, \xi) dt.
\]
By the estimates (\ref{eq3.17}) and (\ref{eq3.20}), $r_l(z, \xi)$ is holomorphic in $z$ with $\Re z <a$ and verifies the estimate (\ref{reste}). Since the both sides of (\ref{R00}) are holomorphic in $z\in \bC\setminus \bR_+$ with $\Re z <a$, this representation formula remains valid for $z$ in this region.
\ef

For $r,s \in \bR$, introduce the weighted Sobolev space
\[
\vH^{r,s} = \{u \in \vS'(\bR^{2n}); (1 + |D_v|^2 + |v|^2 + |D_x|^{\f 2 3})^{\f r 2 } \w{x}^s u \in L^2\}.
\]
Denote $\vB(r,s; r',s')$ the space of continuous linear operators from $\vH^{r,s}$ to $\vH^{r',s'}$. The hypoellipticity of $P_0$ (Proposition \ref{prop2.1}) shows that $(P_0 +1)^{-1} \in \vB(0,0; 2, 0)$. A commutator argument shows that
$(P_0 +1)^{-1} \in \vB(0,s; 2, s)$ for any $s\in \bR$. \\

\begin{cor}\label{cor3.6} Set $R_0(z) = (P_0 -z)^{-1}$, $z \not \in \bR_+$. \\

(a).  Assume $ n\ge 1$. Let $I$ be a compact interval of $\bR$ which does not contain any non negative integer. Then for any $s>\f 1 2$, one has
\be \label{LAP1}
\sup_{\lambda \in I; \epsilon \in ]0, 1]}\|R_0(\lambda \pm i\epsilon)\|_{\vB(0,s; 2,-s)} <\infty
\ee
The boundary values of the resolvent $R_0(\lambda \pm i0) = \lim_{\ep \to 0_+} R_0(\lambda \pm i\epsilon)$ exists
in $\vB(0,s; 2,-s)$ for $\lambda \in I$ and is continuous in $\lambda$.

(b). Assume $n\ge 3$. Let $I$ be a compact interval containing some non negative integer. Then for any $s>1$, one has
\be \label{LAP2}
\sup_{\lambda \in I; \epsilon \in ]0, 1]}\|R_0(\lambda \pm i\epsilon)\|_{\vB(0,s; 2,-s)} <\infty
\ee
for any $k \in \vN$, the limite $R_0(k\pm i0) = \lim_{z \to k, z \not\in \bR_+} R_0(z) $ exists
in $\vB(0,s; 2,-s)$ for any  $s>1$. One has $R_0(0+  i0)  =R_0(0 - i0)$ and  $R_0(k + i0)  - R_0(k - i0)  \in  \vB(0,s; 2,-s)$ for any $s>\frac 1 2$ if $k\ge 1$.
\end{cor}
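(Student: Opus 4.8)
The plan is to read off both assertions from the resolvent representation \eqref{R0} in Proposition \ref{prop3.5}, where the singular part is the finite sum $\sum_{k=0}^l \chi(\xi)\Pi_k^\xi/(\xi^2+k-z)$ and $r_l(z,\xi)$ is a uniformly bounded holomorphic remainder. Since $R_0(z)=\vF_{x\to\xi}^{-1}\hat R_0(z)\vF_{x\to\xi}$ acts by multiplication in $\xi$ by $\hat R_0(z,\xi)$, the boundary-value analysis reduces to understanding, for each relevant integer $k$, the operator of multiplication by $\chi(\xi)\Pi_k^\xi/(\xi^2+k-\lambda\mp i\epsilon)$ as a map $\vB(0,s;2,-s)$. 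Near $\lambda=k$ the denominator $\xi^2+k-z$ vanishes on the sphere $|\xi|^2=\lambda-k$, so this is precisely the classical free-resolvent (Agmon--Kuroda) situation: multiplication by $(\xi^2+(k-\lambda)\mp i\epsilon)^{-1}$ composed with the smooth projector-valued factor $\chi(\xi)\Pi_k^\xi$, sandwiched between weights $\w{x}^{\pm s}$. The factor $(1+|D_v|^2+|v|^2+|D_x|^{2/3})$ that defines $\vH^{2,-s}$ is harmless here because $\Pi_k^\xi$ maps into the $k$-th spectral subspace of the harmonic oscillator in $v$, on which $-\Delta_v+v^2/4$ is bounded by $O(k)$, and $|D_x|^{2/3}$ contributes a factor $\w{\xi}^{2/3}$ which is bounded on $\mathrm{supp}\,\chi$; thus on the range of $\chi(\xi)\Pi_k^\xi$ the operator $(1+|D_v|^2+|v|^2+|D_x|^{2/3})^{1/2}$ is a bounded multiplier and can be absorbed.

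For part (a), $I$ avoids all non-negative integers, so for $k=0,\dots,l$ (with $l$ chosen so that $I\subset(l,l+1)$ or below) the denominator $\xi^2+k-\lambda$ is bounded away from zero uniformly for $\lambda\in I$, $\xi\in\mathrm{supp}\,\chi$; hence every singular term is in fact uniformly bounded in $\vB(0,0;0,0)$ (indeed in $\vB(0,0;2,0)$ after the harmonic-oscillator remark), the remainder $r_l$ is uniformly bounded by \eqref{reste}, and the $\w{x}^{\pm s}$ weights are not even needed — but keeping them only makes the bound easier. Continuity in $\lambda$ and existence of $R_0(\lambda\pm i0)$ follow because each term depends continuously (indeed holomorphically) on $z$ for $\Re z<a$, $\lambda\in I$. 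This gives \eqref{LAP1} with any $s\ge 0$, in particular $s>\tfrac12$.

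For part (b), $I$ contains some integer $k_0$, and now near $z=k_0$ the term $\chi(\xi)\Pi_{k_0}^\xi/(\xi^2+k_0-z)$ is genuinely singular along $|\xi|=\sqrt{\lambda-k_0}$ when $\lambda>k_0$. Here one invokes the standard limiting absorption principle for the Laplacian on $\bR^n$: for $n\ge 3$ and $s>\tfrac12$, multiplication by $(|\xi|^2-\mu\mp i\epsilon)^{-1}$ is uniformly bounded $L^{2,s}(\bR^n_x)\to L^{2,-s}(\bR^n_x)$ for $\mu$ in a compact set, with boundary values continuous in $\mu$ down to $\mu=0$ (the threshold $\mu=0$ is where $s>1$, rather than $s>\tfrac12$, is forced in dimension $n=3$ because the resolvent of $-\Delta_x$ at zero energy, i.e. convolution with the Newtonian kernel $c_n|x|^{2-n}$, maps $L^{2,s}\to L^{2,-s}$ only for $s>\tfrac n2-1=\tfrac12$ away from zero but needs the stronger weight for the continuous extension through $\mu=0$ when $n=3$; cf.\ \cite{w0}). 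One applies this with $\mu=\lambda-k_0$, carrying along the smooth bounded operator-valued factor $\chi(\xi)\Pi_{k_0}^\xi$ which commutes with multiplication in $\xi$ and is uniformly bounded; the other terms $k\ne k_0$ in the sum are non-singular on $I$ as in part (a), and $r_l$ is uniformly bounded. This yields \eqref{LAP2} and the existence of $R_0(k\pm i0)$ for $s>1$. Finally, at $k=0$ the map $\mu\mapsto (|\xi|^2-\mu\mp i\epsilon)^{-1}$ has the same limit from both sides at $\mu=0$ (the Newtonian potential is real and the $\pm i0$ prescriptions coincide in the limit), giving $R_0(0+i0)=R_0(0-i0)$; whereas at $k\ge 1$ the difference $R_0(k+i0)-R_0(k-i0)$ comes only from the jump $(|\xi|^2\mp i0)^{-1}$ times $\chi(\xi)\Pi_k^\xi$, i.e.\ from a delta measure on $\{|\xi|^2=0\}$ which here is just the origin and contributes nothing — more precisely the jump is $2\pi i\,\delta(|\xi|^2)\chi(\xi)\Pi_k^\xi$-type, an operator that after Fourier transform back in $x$ is a smoothing operator with a kernel decaying in $x$, hence lies in $\vB(0,s;2,-s)$ for any $s>\tfrac12$; since the contribution of the genuinely singular sphere is absent at energy exactly $k$ (the sphere degenerates to a point), this jump term is as regular as claimed.

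The main obstacle is the threshold analysis in part (b) at the integer points: one must match the abstract decomposition \eqref{R0} to the classical weighted estimates for $(-\Delta_x-\mu)^{-1}$ on $\bR^n$, get the weights right (the loss from $s>\tfrac12$ to $s>1$ at $\mu=0$ in dimension $n=3$ is exactly the obstruction to a uniform statement and is the reason the hypothesis reads $s>1$), and verify that the operator-valued coefficient $\chi(\xi)\Pi_k^\xi$ together with the $\vH^{2,-s}$-defining multiplier can genuinely be treated as an inert bounded factor — which is where the explicit structure of the Riesz projections $\Pi_k^\xi$ from Appendix A, in particular their smooth dependence on $\xi$ and their boundedness together with $(-\Delta_v+v^2/4)^{1/2}\Pi_k^\xi$, is used.
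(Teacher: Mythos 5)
Your overall route matches the paper's: use the decomposition from Proposition \ref{prop3.5}, apply the classical weighted-$L^2$ limiting absorption estimates for $(-\Delta_x - z)^{-1}$ to the singular terms $\chi(D_x)\Pi_k^{D_x}(-\Delta_x + k - z)^{-1}$, and treat $r_l(z)$ as a uniformly bounded remainder. But there are two genuine gaps.

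\emph{The upgrade to $\vH^{2,-s}$.} Your argument that $(1+|D_v|^2+|v|^2+|D_x|^{2/3})^{1/2}$ can be absorbed only covers the singular terms, and even there the justification is imprecise: $\Pi_k^\xi$ does not project onto an eigenspace of $-\Delta_v + v^2/4$; its range is spanned by the complex-translated Hermite functions $\psi_\alpha^\xi(v)=\psi_\alpha(v+2i\xi)$, which are eigenfunctions of $\hat P_0(\xi)$, not of the harmonic oscillator. (The desired boundedness still holds for $\xi$ in a compact because the range of $\chi(\xi)\Pi_k^\xi$ is a finite-dimensional, smoothly varying space, but that is not the argument you gave.) More importantly, nothing you say shows that the remainder $r_l(z)$ lands in $\vH^{2,-s}$; the estimate (\ref{reste}) only gives uniform boundedness in $\vB(L^2)$, hence in $\vB(0,s;0,-s)$. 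The paper closes this exactly here: first prove the uniform bound in $\vB(0,s;0,-s)$ using only $L^2$-boundedness of $r_l$, and then promote to $\vB(0,s;2,-s)$ by writing $R_0(z) = R_0(-1) + (1+z)R_0(-1)R_0(z)$ and using the hypoellipticity of $P_0$, which gives $R_0(-1)\in\vB(0,s;2,s)$ for all $s$. Without this (or some equivalent regularity gain) the statement about $\vB(0,s;2,-s)$ is not established.

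\emph{The jump at $k\ge 1$.} Your explanation that $R_0(k+i0)-R_0(k-i0)$ ``contributes nothing'' because the singular set is the origin is wrong and would, if taken literally, make the claimed membership in $\vB(0,s;2,-s)$ vacuous. The nontrivial contributions for $k\ge 1$ come from the terms $j=0,\dots,k-1$ in the sum: there the scalar factor is $(-\Delta_x-(k-j)\mp i0)^{-1}$ at a strictly positive energy $k-j>0$, so the jump is the spectral measure of $-\Delta_x$ on the sphere $|\xi|^2=k-j$, which is a nonzero operator. The reason this jump lies in $\vB(0,s;2,-s)$ for every $s>\tfrac12$ (a weaker weight than the $s>1$ needed for the resolvent itself near integers in $n=3$) is that the imaginary part of the free boundary-value resolvent is smoother than the resolvent itself; only the term $j=k$, sitting at zero energy, has no jump when $n\ge 3$, which is also what gives $R_0(0+i0)=R_0(0-i0)$. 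You should rework this part accordingly; as written, it mislocates where the discontinuity actually lives.
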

\pf Proposition \ref{prop3.5} shows that
\be \label{RR0}
 R_0(z) = \sum_{k=0}^l \chi(D_x) \Pi_k^{D_x}(-\Delta_x +k -z)^{-1} + r_l(z),
\ee
for  $z \in \bC$ with $\Re z <a$ and $\Im z \neq 0$ and
 that  $ r_l(z) $ is  bounded on $L^2$ and  holomorphic in $z$ with $\Re z <a$.
 $ \chi(D_x) \Pi_k^{D_x}$, $k=0, 1, \cdots,$  are  Weyl pseudodifferential operators with nice symbols $b_k$ independent of $x$:
\be
\chi(D_x) \Pi_k^{D_x} =  b_k^w(v, D_x, D_v)
\ee
with $b_k(v, \xi,\eta)$ given by
 \be
 b_k(v, \xi,\eta) = \int_{\bR^n} e^{-i v'\cdot\eta/2}\left(\sum_{|\alpha|=k} \chi(\xi) \psi_\alpha( v+ v' + 2i\xi)\psi_\alpha( v- v' + 2i\xi) \right) dv'.
 \ee
 In particular, 
\be
b_0(v, \xi,\eta) = 2^{\f n2}\chi(\xi)e^{-v^2 -\eta^2 + 2iv\cdot\xi + 2\xi^2}.
\ee
These Weyl pseudodifferential operators belong to $\vB(r,s; r',s)$ for any $r, r', s \in \bR$  (\cite{hor}).\\

Since  for any compact interval $I' \subset \bR$, one has
\be
\sup_{\lambda \in I', \epsilon \in ]0,1]}\|\w{x}^{-s} (-\Delta_x - (\lambda \pm i \epsilon))^{-1}\w{x}^{-s}\|_{\vB(L^2(\bR^n_x))} <\infty
\ee
for any $s>1/2$ if $I'$ does not contain $0$ and for any $s>1$ and $n\ge 3$ if $I'$ contains $0$, it follows from
(\ref{RR0}) that for $I \subset ]-\infty, a[$
\be \label{eq3.25}
\sup_{\lambda \in I; \epsilon \in ]0, 1]}\|R_0(\lambda \pm i\epsilon)\|_{\vB(0,s; 0,-s)} <\infty
\ee
for $s>\f 1 2$ if $I \cap \bN =\emptyset$ or $s>1$ and $n\ge 3$ if $I \cap \bN \neq \emptyset$.
Estimates (\ref{LAP1}) and (\ref{LAP2}) follow from (\ref{eq3.25}) and the resolvent equation
\[
R_0(z) = R_0(-1) + (1+z) R_0(-1)R_0(z)
\]
by noticing that $R_0(-1)\in \vB(0, s; 2, s)$ for any $s \in \bR$.  The other assertions  of Corollary \ref{cor3.6} can be proven by making use of the properties  of $(-\Delta_x -(\lambda \pm i0))^{-1}$.
\ef

 The  formula (\ref{RR0}) can also be used to study the threshold asymptotics of the resolvent $R_0(z)$ as
 $z   \to k$, $\Im z\neq 0$, $k \in \bN$.  To simplify calculations, we only consider  the  threshold zero in the case $n=3$.

\begin{prop}\label{prop3.7} Let $n =3$. One has the following low-energy resolvent asymptotics for $R_0(z)$:
for $s, s'> \f 1 2$ and $s+ s' >2$, 
there exists $\ep>0$ such that 
\be \label{eq3.26}
R_0(z) = G_0 + O(|z|^\epsilon), \mbox{ as }  z\to 0, z\not\in\bR_+,
\ee
as operators in $\vB(-1,s; 1, -s')$.  More generally, for any integer $N \ge 1$ and $s >N+\f 1 2$,  there exists $\epsilon>0$
\be \label{eq3.27}
R_0(z) =\sum_{j=0}^N z^{\f j 2}G_j + O(|z|^{\f N 2 + \epsilon}), \mbox{ as }  z\to 0, z\not\in\bR_+,
\ee
as operators in $\vB(-1,s; 1, -s)$. Here the branch of $z^{\f 1 2}$ is chosen such that its imaginary part is positive when $z\not \in \bR_+$ and $G_j \in \vB(-1,s; 1, -s)$ for $s>j+\frac 1 2$, $j \ge 1$.  In particular,
\be \label{G0}
G_0 = F_0 + F_1,
\ee
where $F_0$ is the operator with integral kernel
\be \label{F0}
F_0(x,v; x', v') = \frac{\psi_0(v)\psi_0(v')}{4\pi |x-x'|}
\ee
 and $F_1\in \vB(-1,s; 1,-s')$ for any $s, s' \ge 0$ and $s+s' >\f 3 2$. $G_1 : \vH^{-1,s} \to \vH^{1,-s}$, $s >\f 3 2$,  is an operator of rank one with integral kernel given by
 \begin{equation}
 K_1(x,x';v,v') = \f{i}{4\pi} \psi_0(v)\psi_0(v'). \label{eq2.41}
 \end{equation} 
Here $\psi_0 = (2\pi)^{-\f 3 4} e^{-\f{v^2}4}$ is the first eigenfunction of the harmonic oscillator $-\Delta_v + \f { v^2} 4$. 
\end{prop}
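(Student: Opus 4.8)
The plan is to derive the low-energy asymptotics of $R_0(z)$ from the decomposition \eqref{RR0} with $l=0$, which reduces everything to the well-understood low-energy behavior of the free Laplacian resolvent $(-\Delta_x + k - z)^{-1}$ in dimension three. Writing, for $\Re z$ small and $\Im z \neq 0$,
\be
R_0(z) = \chi(D_x)\Pi_0^{D_x}(-\Delta_x - z)^{-1} + r_0(z),
\ee
we see that $r_0(z)$ is holomorphic in $z$ near $0$ and bounded on $L^2$; combined with the mapping property $(P_0+1)^{-1}\in\vB(0,s;2,s)$ one upgrades it (via one resolvent identity) to a holomorphic $\vB(-1,s;1,-s)$-valued function, so it contributes a full Taylor expansion in $z$ and in particular the constant term $r_0(0)$ plus $O(|z|)$. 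The only genuinely singular part is the first summand.

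First I would analyze $\chi(D_x)\Pi_0^{D_x}(-\Delta_x - z)^{-1}$. The operator $\chi(D_x)\Pi_0^{D_x}$ is a Weyl pseudodifferential operator with the explicit symbol $b_0(v,\xi,\eta) = 2^{n/2}\chi(\xi)e^{-v^2 - \eta^2 + 2iv\cdot\xi + 2\xi^2}$, so it maps between the weighted spaces $\vB(r,s;r',s)$ for all $r,r',s$; in particular we may split off the $\xi = 0$ value of the symbol. Write $\Pi_0^\xi = \Pi_0^0 + (\Pi_0^\xi - \Pi_0^0)$ where $\Pi_0^0$ is the rank-one projection $u \mapsto \langle\psi_0,u\rangle_{L^2(\bR^n_v)}\psi_0$. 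The piece $\chi(D_x)\Pi_0^0(-\Delta_x - z)^{-1}$ is, on the $v$-side, the rank-one projector onto $\psi_0$ and on the $x$-side it is $\chi(D_x)(-\Delta_x - z)^{-1}$; using the classical three-dimensional expansion
\be
(-\Delta_x - z)^{-1} = \frac{1}{4\pi|x-x'|} + \frac{i z^{1/2}}{4\pi} + O(|z|^{1-})
\ee
as operators $\vB(0,s;0,-s)$, $s > 3/2$ (and more generally $(-\Delta_x-z)^{-1} = \sum_{j=0}^N z^{j/2}G_j^{\Delta} + O(|z|^{N/2 + \epsilon})$ in $\vB(0,s;0,-s)$, $s > N+1/2$), we read off $F_0$ with kernel $\psi_0(v)\psi_0(v')/(4\pi|x-x'|)$ and $G_1$ with kernel $\tfrac{i}{4\pi}\psi_0(v)\psi_0(v')$, after noting $\chi(0) = 1$ so the smoothing cutoff $\chi(D_x)$ drops out of the leading singular terms and the difference $(\chi(D_x)-1)(-\Delta_x-z)^{-1}$ is smooth (it lives where $|\xi|$ is bounded below). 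For the remainder $\chi(D_x)(\Pi_0^\xi - \Pi_0^0)(-\Delta_x - z)^{-1}$ one gains a factor $O(|\xi|^2)$ near $\xi = 0$ from the analyticity of $\xi \mapsto \Pi_0^\xi$ (Lemma \ref{lem2.2} in Appendix A): this kills one power of $|x-x'|^{-1}$ in the kernel, producing an operator with the milder mapping property claimed for $F_1 \in \vB(-1,s;1,-s')$, $s+s' > 3/2$, and whose $z$-expansion starts one order smoother, hence contributing only to $G_j$ with $j \ge 2$. Collecting the $z^0$-terms gives $G_0 = F_0 + F_1 + r_0(0)$ with $r_0(0)$ absorbed into $F_1$-type remainder (it is in fact bounded $L^2\to L^2$, hence certainly in $\vB(-1,s;1,-s')$), and assembling all orders gives \eqref{eq3.27}.

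**The main obstacle** will be bookkeeping the precise weighted mapping properties through the pseudodifferential operator $\chi(D_x)\Pi_k^{D_x}$: one must verify that composing the $x$-weighted estimates for $(-\Delta_x - z)^{-1}$ with $b_0^w$ stays in the advertised classes $\vB(-1,s;1,-s')$ and that the gain of $|\xi|^2$ from $\Pi_0^\xi - \Pi_0^0$ genuinely translates into a gain of one $\langle x\rangle$-weight and one derivative — i.e. that $|\xi|^2 (-\Delta_x - z)^{-1} = -1 + (z)(-\Delta_x-z)^{-1}$ type identities interact correctly with the cutoffs and weights. This is standard Hörmander-calculus (the symbols are Schwartz in $v,\eta$ and compactly supported in $\xi$, with Gaussian decay), and the $z$-expansion of the three-dimensional Laplacian resolvent in weighted spaces is classical (see \cite{w0}), so I expect no conceptual difficulty, only careful tracking of indices; the restriction to $n=3$ is exactly what makes the $z^{1/2}$-expansion of $(-\Delta_x-z)^{-1}$ clean, which is why the statement is limited to that case.
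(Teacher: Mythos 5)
Your proposal starts from the same decomposition (\ref{RR0}) with $l=0$ as the paper, but extracts the expansion differently: the paper writes out the integral kernel of $b_0^w(v,D_x,D_v)(-\Delta_x-z)^{-1}$ explicitly as a convolution against $\Phi(v,v',y)$ and Taylor-expands $e^{i\sqrt{z}\,|y-(x-x')|}$ pointwise, controlling each remainder through the Hilbert--Schmidt norm of a weighted kernel; you instead freeze the symbol at $\xi=0$, reduce the leading part to the rank-one projection onto $\psi_0$ tensored with $(-\Delta_x-z)^{-1}$, and quote the classical weighted-space expansion of the three-dimensional free Laplacian resolvent. Both routes produce the same $F_0$ and $G_1$ and the same description of $F_1$ (including the absorption of $r_0(0)$, which the hypoellipticity upgrade places in $\vB(-1,0;1,0)\subset\vB(-1,s;1,-s')$ for $s,s'\ge 0$). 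Your version is more modular and makes transparent where $n=3$ enters, at the price of the commutator bookkeeping you flag; the paper's direct kernel computation avoids that bookkeeping entirely.

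One intermediate claim is wrong as stated: $\Pi_0^\xi-\Pi_0^0$ is only $O(|\xi|)$, not $O(|\xi|^2)$. Indeed the kernel of $\Pi_0^\xi$ is $(2\pi)^{-3/2}e^{-(v^2+v'^2)/4}\,e^{-i(v+v')\cdot\xi+2\xi^2}$, whose first-order term in $\xi$ is $-i(v+v')\cdot\xi$ times the Gaussian and does not vanish. Fortunately, everything you deduce from the (false) quadratic gain already follows from the linear one: a single factor of $\xi$ pairs with one $x$-derivative of the free resolvent kernel, improving the off-diagonal decay from $|x-x'|^{-1}$ to $|x-x'|^{-2}$ (exactly the mapping class claimed for $F_1$, the local singularity being smoothed by the Schwartz convolution kernel of $\chi(D_x)(\Pi_0^{D_x}-\Pi_0^0)$), and it annihilates the $z^{1/2}$-coefficient of $(-\Delta_x-z)^{-1}$ because that coefficient has the constant kernel $\f{i}{4\pi}$; hence this remainder still contributes nothing to $G_1$. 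You should therefore replace the identity $|\xi|^2(-\Delta_x-z)^{-1}=1+z(-\Delta_x-z)^{-1}$ by the corresponding statement for one derivative, and read ``contributes only to $G_j$ with $j\ge 2$'' as ``beyond its constant term, which is part of $F_1$.'' With that repair the argument is correct.
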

\pf Note that by a complex interpolation, the results on the resolvent $R_0(z)$ in  Corollary \ref{cor3.6} also hold in $\vB(-1, s; \; 1, -s)$. \\ 

For $z \not \in \bR_+$, (\ref{RR0}) with $l=0$  shows that
\be \label{eq3.31}
 R_0(z) =  b_0^w(v,D_x,D_v)(-\Delta_x  -z)^{-1} + r_0(z),
\ee
with $ r_0(z) \in \vB(-1,0; 1,0)$   holomorphic in $z$  when  $\Re z < a$ for some $a \in ]0, 1[$. Here the cut-off $\chi(\xi)$ is chosen such that  $\chi\in C_0^\infty$ and $\chi(\xi) = 1$ in a neighbourhood of $\{ |\xi|^2 \le a\}$. Therefore $r_0(z)$ admits a convergent expansion in powers of $z$ for $z$ near $0$
\[
r_0(z)   = r_0(0) +  z r_0'(0) + \cdots
\]
in $\vB(-1,0; 1,0)$. It is sufficient to analyze the lower-energy expansion of $ b_0^w(v,D_x,D_v)(-\Delta_x  -z)^{-1}$.
\\

The integral kernel of $  b_0^w(v,D_x,D_v)(-\Delta_x  -z)^{-1}$, $z \not\in \bR_+$,  is given by
\be \label{kernel}
K(x,x';v,v';z) = \int_{\bR^3} e^{i\sqrt{z}|y-(x-x')|}\f{1}{4\pi |y -(x-x')|} \Phi(v,v',y) \; dy
\ee
with
\[
\Phi(v,v',y) = (2\pi)^{-\f 9 2}e^{-\f 1 4 (v^2 + v'^2)} \int_{\bR^3} e^{i(y-v-v')\cdot \xi + 2\xi^2}\chi(\xi) \; d\xi.
\]
Since $\chi\in C_0^\infty$, one has the following asymptotic expansion for $K(x,x';v,v';z)$ : for  any $\ep \in [0,1]$ and $N\ge 0$
\be \label{eq3.33}
|K(x,x';v,v';z) - \sum_{j=0}^N z^{\f j 2} K_j(x, x', v, v') | \le C_{N, \ep} |z|^{\f{N+\ep}{2}}|x-x'|^{N-1+\ep}e^{-\f 1 4 (v^2 + v'^2)}
\ee
where
\bea
\label{Kj}
K_j(x, x';v,v')& = &  \frac{i^j}{4\pi} \int_{\bR^3}|y-(x-x')|^{j-1} \Phi(v,v',y) dy.
\eea
Remark  that for $N\ge 1$, $s', s > N + \f 1 2 $ and $0 <\ep < \min\{s,s'\}- N- \f 1 2$, 
\[
\w{x}^{-s}\w{x'}^{-s'}  |x-x'|^{N-1+\ep}e^{-\f 1 4 (v^2 + v'^2)} \in L^2(\bR^{12})
\]
and the same is true if $N=0$ and $s, s'> \f 1 2 $ with $s+ s'>2$. We obtain the asymptotic expansion for
$ b_0^w(v,D_x,D_v)(-\Delta_x  -z)^{-1}$ in powers of $z^{\f 1 2}$ for $z$ near $0$ and $z\not\in \bR_+$.
\be \label{eq3.34}
b_0^w(v,D_x,D_v)(-\Delta_x  -z)^{-1} =\sum_{j=0}^N z^{\f j 2}K_j + O(|z|^{\f N 2 + \epsilon}), \mbox{ as }  
\ee
as operators in $\vB(0,s'; 0, -s)$, $s', s > N + \f 1 2$ (and $s+s'>2$ if $N=0$). By the hypoelltpticity of $P_0$, this expansion still holds in  $\vB(-1,s'; 1, -s)$. This proves (\ref{eq3.27}) with $G_k$ given by
\be
G_{2j} = K_{2j} + \f{r_0^{(j)}(0)}{j!}, \quad G_{2j+1} = K_{2j+1}, \quad j \ge 0.
\ee

To show (\ref{G0}) and (\ref{eq2.41}), note that since $\chi(0)=1$, one has
\[
 \int_{\bR^3} \Phi(v,v',y) \; dy = \chi(0)\psi_0(v)\psi_0(v') = \psi_0(v)\psi_0(v').
\]
One can then calculate that
\bea
K_0(x, x',v,v') & = & \frac{1}{4\pi} \int_{\bR^3}\Phi(v,v', y)\frac{ 1 }{|y-(x-x')|}  dy  \nonumber \\
& = & \frac{1}{4\pi |x-x'|} \psi_0(v)\psi_0(v') \label{eq3.38}   \\
& & + \frac{1}{4\pi} \int_{\bR^3}\Phi(v,v', y)(\frac{ 1 }{|y-(x-x')|} -\frac{ 1 }{|x-x'|})  dy  \nonumber
\eea
and
\bea
K_1(x, x',v,v')&= &  \frac{i}{4\pi} \int_{\bR^3} \Phi(v, v',y) dy = \frac{i}{4\pi}  \psi_0(v)\psi_0(v').
\eea
This shows (\ref{eq2.41}) and that  $G_0 = F_0 + F_1$  with $F_1 = K_{0,1} + r_0(0)$, $K_{0,1}$ being the operator with the integral kernel
\[
K_{0,1}(x,x', v, v') = \frac{1}{4\pi} \int_{\bR^3}\Phi(v,v', y)(\frac{ 1 }{|y-(x-x')|} -\frac{ 1 }{|x-x'|})  dy,
\]
which is a smooth function and 
\[
K_{0,1}(x,x', v, v') = O(\psi_0(v)\psi_0(v')|x-x'|^{-2})
\]
for $|x-x'|$ large. Therefore $K_{0,1}$ is bounded in $B(-1,s; 1,-s')$ for any $s, s'\ge 0$ and $s+s'>\f 3 2$.
This shows that $F_1 = K_{0,1} + r_0(0)$ has the same continuity property, which proves the decomposition (\ref{G0}) for $G_0$.
\ef

 The following high-energy pseudo-spectral estimate is used in the proof of Theorem \ref{th1.1} (a).

\begin{prop}\label{prop3.8} Let $n \ge 1$.  Then for every $\delta>0$, there exists $M>0$ such that
\be \label{eq3.40}
\|R_0(z)\| \le \f{M}{|z|^{\f 1 3}}, \quad
\ee
and
\be \label{eq3.40b}
\|(1-\Delta_v + v^2)^{\f 1 2}R_0(z)\| \le \f{M}{|z|^{\f 1 6}}, \quad
\ee
for $|\Im z| > \delta $ and $ \Re z \le \f 1 M |\Im z|^{\f 13}$.
\end{prop}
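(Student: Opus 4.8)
The idea is to conjugate by the Fourier transform in $x$ and reduce everything to a uniform-in-$\xi$ estimate for the family $\hat P_0(\xi)$. Writing $z = \lambda + i\mu$ with $|\mu| > \delta$, we must bound $\hat R_0(z,\xi) = (\hat P_0(\xi) - z)^{-1}$ in $\vB(L^2(\bR^n_v))$ uniformly in $\xi \in \bR^n$, by $M|z|^{-1/3}$ (respectively by $M|z|^{-1/6}$ after multiplying by $(1-\Delta_v+v^2)^{1/2}$), in the region $\lambda \le \frac 1 M|\mu|^{1/3}$. Once this is done, since $R_0(z) = \vF_{x\to\xi}^{-1}\hat R_0(z)\vF_{x\to\xi}$ and $(1-\Delta_v+v^2)^{1/2}$ commutes with $\vF_{x\to\xi}$, the Plancherel theorem gives (\ref{eq3.40}) and (\ref{eq3.40b}) immediately.

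\medskip

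For the uniform estimate, I would split into two regimes according to the size of $|\xi|$ relative to $|z|$. \textbf{Low frequencies.} When $|\xi|^2 \le c|z|^{1/3}$ for a suitable small constant $c$, I would use the semigroup representation as in the proof of Proposition \ref{prop3.5} and Corollary \ref{cor2.4}: writing $\hat R_0(z,\xi) = -\int_0^\infty e^{tz}e^{-t\hat P_0(\xi)}\,dt$ when $\Re z < 0$ and then continuing, one controls the integral using the spectral decomposition of $\hat P_0(\xi)$ from Appendix A (Lemma \ref{lem2.2}), whose eigenvalues are $E_l = l + |\xi|^2$, together with the bounds (\ref{eq2.10}), (\ref{norm}) on the Riesz projections $\Pi_l^\xi$. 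Since $|\xi|^2$ is small compared to $|z|^{1/3}$ while $|\Im z|$ is large, $z$ stays at distance $\gtrsim |z|$ from every $E_l$, and one even gets the much stronger bound $\|\hat R_0(z,\xi)\| \le C/|z|$; the factor $(1-\Delta_v+v^2)^{1/2}$ costs at most $\sqrt{\text{(largest relevant }E_l)}\lesssim |z|^{1/2}$ in this regime, which is still comfortably better than needed.

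\medskip

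\textbf{High frequencies.} The genuine pseudospectral content is when $|\xi|^2 \gtrsim c|z|^{1/3}$, i.e. $|\xi|$ is at least comparable to $|z|^{1/6}$. Here I would discard the harmonic-oscillator part and exploit that $\hat P_0(\xi) = -\Delta_v + \frac14|v+2i\xi|^2 - \frac n2 + |\xi|^2$ has a large imaginary part $\Im\big(\frac14|v+2i\xi|^2\big) = v\cdot\xi$ in its "symbol". The standard device is a commutator / positive-commutator estimate: for $u$ in the domain, expand $\|(\hat P_0(\xi)-z)u\|^2$ and also compute $\Im\langle(\hat P_0(\xi)-z)u,u\rangle = \langle v\cdot\xi\, u,u\rangle - \mu\|u\|^2$ and $\Re\langle(\hat P_0(\xi)-z)u,u\rangle \ge -\lambda\|u\|^2 + \|\nabla_v u\|^2 + \frac14\||v+2i\xi|u\|^2 - \frac n2\|u\|^2$. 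Combining these with an interpolation in the spirit of the hypoellipticity estimate of Proposition \ref{prop2.1} — which in Fourier variables says roughly $\||\xi|^{2/3}u\|^2 \lesssim \|\hat P_0(\xi)u\|^2 + \|u\|^2$ — and using $|\xi|^{2/3}\gtrsim |z|^{1/9}$... more efficiently, one runs the scaled commutator argument directly: the subelliptic estimate gives $\||\xi|^{2/3}u\| \lesssim \|(\hat P_0(\xi)-z)u\| + (\lambda+1)^{1/2}\|(\hat P_0(\xi)-z)u\|^{1/2}\|u\|^{1/2}$, and a bootstrapping with $|\xi|^{2/3} \ge c'|z|^{1/9}$ together with the condition $\lambda \le \frac 1M|\mu|^{1/3}\le \frac1M|z|^{1/3}$ closes to give $\|u\| \le M|z|^{-1/3}\|(\hat P_0(\xi)-z)u\|$; the Fokker-Planck hypoellipticity simultaneously yields $\|(1-\Delta_v+v^2)^{1/2}u\| \lesssim \|\hat P_0(\xi)u\| + \|u\| \lesssim |z|^{1/3}\|(\hat P_0(\xi)-z)u\| + \ldots$, so squaring/interpolating against the $L^2$ bound gives the $|z|^{-1/6}$ gain in (\ref{eq3.40b}).

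\medskip

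\textbf{Main obstacle.} The delicate point is the high-frequency commutator estimate: one must extract a gain of exactly $|z|^{1/3}$ uniformly in $\xi$, and the positivity one uses ($v\cdot\xi$ from the imaginary part, the subelliptic $|\xi|^{2/3}$ from Proposition \ref{prop2.1}) is not of a fixed sign in $v$, so the argument requires carefully choosing the multiplier and absorbing error terms of the form $\||v+2i\xi|u\|\cdot\|u\|$ into the elliptic part — precisely the balance that makes $\frac13$ (and not $\frac12$) the right exponent and forces the parabola-shaped region $\Re z \le \frac 1M|\Im z|^{1/3}$. Everything else — the Fourier reduction, the low-frequency estimate, and the passage from $\hat R_0$ to $R_0$ — is routine given the results already established.
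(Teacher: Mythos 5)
Your overall skeleton — Fourier reduction to $\hat R_0(z,\xi)$, a low/high split in $|\xi|$ relative to $|z|$, and then passage to (\ref{eq3.40b}) by an energy identity — matches the paper, but at both crucial steps your argument has a gap where the paper does something quite different. In the high-frequency regime, you try to extract the $|z|^{-\frac 1 3}$ gain directly from the subelliptic estimate of Proposition \ref{prop2.1}, read in Fourier variables as $\||\xi|^{\frac 2 3}u\|\lesssim\|\hat P_0(\xi)u\|+\|u\|$. This cannot close: replacing $\hat P_0(\xi)$ by $\hat P_0(\xi)-z$ on the right costs $|z|\|u\|$, which dominates $|\xi|^{\frac 2 3}\|u\|$ unless $|\xi|\gg|z|^{\frac 3 2}$ — a much more restrictive regime than the one you need, and far from your threshold $|\xi|\gtrsim|z|^{\frac 1 6}$ (with which $|\xi|^{\frac 2 3}\gtrsim|z|^{\frac 1 9}$ would anyway be short of $|z|^{\frac 1 3}$). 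The hypoelliptic a priori estimate simply does not see the geometry of the pseudospectral region $\Re z\le\frac 1 M|\Im z|^{\frac 1 3}$; what is needed is a genuine semiclassical pseudospectrum estimate. The paper achieves this by a rotation and rescaling that turns $\hat P_0(\xi)$, for $|\xi|\ge c|z|^{\frac 1 2}$, into $|\xi|^2 A(h)$ with $A(h)=-h^2\Delta_v+\frac{v^2}{4}+iv_1$ and $h=|\xi|^{-2}$, and then invokes Theorem 1.4 of \cite{dsz} to get $\|(A(h)-z')^{-1}\|\le Ch^{-\frac 2 3}$; this is the source of the $\frac 1 3$ exponent and it is not a consequence of Proposition \ref{prop2.1}.

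Your low-frequency step is also not sound as written. You propose to control $\hat R_0(z,\xi)$ by the spectral decomposition $\sum_l\Pi_l^\xi/(E_l-z)$ together with the distance $|z-E_l|\gtrsim|z|$, but the Riesz projections are not uniformly bounded — by (\ref{norm}) their norms grow in $l$ — so $\sum_l\|\Pi_l^\xi\|/|z-E_l|$ diverges, and the semigroup version $\int_0^\infty e^{tz}e^{-t\hat P_0(\xi)}\,dt$ only yields a bound independent of $|\Im z|$ unless you integrate by parts (which reintroduces $\hat P_0(\xi)$ applied to the semigroup and a $t=0$ singularity). The paper instead works in the region $|\xi|\le c_1|z|^{\frac 1 2}$ by writing the resolvent identity relating $\hat R_0(z,\xi)$ to the selfadjoint $\hat R_0(z,0)$, using $\|\hat R_0(z,0)\|\le|\Im z|^{-1}$ and $\|v\cdot\xi\,\hat R_0(z,0)\|\le C|\xi|/\sqrt{|\Im z|}$ to sum a Neumann series, and then fixes $\Re z=-\frac n 2$ first and extends to the parabola-shaped region by a second Neumann series using (\ref{eq3.41}). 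Both of these perturbative devices, rather than the spectral sum, are what makes the low-frequency estimate go through.
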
 
\pf   We first prove that for some constant $C>0$
\be \label{eq3.41}
\|R_0(z)\| \le \f{C}{|z|^{\f 1 3}}, \quad
\ee
for $z= -\f n 2 + i \mu$ with $\mu \in \bR$. 
It suffices to show that
\be \label{eq3.42}
\|\hat R_0(-\f n 2 + i \mu,\xi)\|_{\vB(L^2(\bR^n_v))} \le \f{C}{|z|^{\f 1 3}}, \quad
\ee
uniformly in $\xi\in \bR^n$.  Notice that 
$\hat{P}_0(0)$ is selfadjoint and that one has 
\[
\|\hat R_0(z, 0) \| \le \f 1{|\Im z|} \mbox{ and } \|v\cdot \xi \hat R_0(z, 0) \| \le \f {C'|\xi|}{\sqrt{|\Im z|}}, 
\]
for $\Im z \neq 0$.   Making use of the resolvent equation
\[
\hat R_0(z,\xi) =  \hat R_0(z, 0) - \hat R_0(z,\xi) i v\cdot \xi \hat R_0(z,0),
\]
 one obtains
\be \label{eq3.44}
\|\hat R_0(z,\xi)\|_{\vB(L^2_v)} \le \f{C}{|\Im  z|}, \quad
\ee
if $|\xi| \le c_1 \sqrt{|z|}$ for some $c_1>0$. For $  c |z|^{\f 1 2} \le |\xi| $ with $c>0$ small enough, since we are concerned with estimates for $|z|$ large, we can use  a rotation and a rescaling to reduce to a semiclassical problem. Set $A(h) = -h^2 \Delta_v + \f{v^2} 4 + i v_1$. Then
\[
\|\hat R_0(z,\xi)\| = |\xi|^{-2} \|(A(h) -z')^{-1}\|
\]
where $ h = |\xi|^{-2}$ and $z' = |\xi|^{-2}( \f n 2 + z)$. According to Theorem 1.4 of \cite{dsz}, one has
\be \label{eq3.45}
\|(A(h) -z')^{-1}\| \le C h^{-\f 2 3},
\ee
if $0<h \le h_0$, $ |z'| \le C$ and $ \Re z' \le \f { |\Im z'|^2}4$. In particular, for $z = -\f n 2 + i \mu$ with $\mu$ real,  one has
\be \label{eq3.46}
\|\hat R_0(z,\xi)\|  \le C' h^{\f 1 3} \le  C'' |z|^{-\f 1 3},
\ee
 for $  c |z|^{\f 1 2} \le |\xi| \le  c^{-1} |z|^{\f 3 2}$. This proves (\ref{eq3.41}). Now to prove (\ref{eq3.40}),
 set $z = \lambda + i \mu$ with $\lambda, \mu \in \bR$ and  write
 \[
 R_0(\lambda + i\mu) =   R_0(-\f n 2  + i\mu)  - (\lambda + \f n 2) R_0(-\f n 2  + i\mu) R_0(\lambda  + i\mu).
 \]
 According to (\ref{eq3.41}), 
 \[\|(\lambda + \f n 2) R_0(-\f n 2  + i\mu)\| \le  \f{C|\lambda + \f n 2|}{|z|^{\f 1 3}}
 \le \f 12\]
 if $|\lambda| \le \f 1 M |\mu|^{\f 1 3}$ and $|\mu| \ge M$ for some $ M >1$ large enough. (\ref{eq3.40}) follows from
 (\ref{eq3.41}) and the equation $  R_0(\lambda + i\mu) =  (1 + (\lambda + \f n 2) R_0(-\f n 2  + i\mu))^{-1} 
 R_0(-\f n 2 + i\mu)$ when $|\lambda|\f 1 M |\Im z|^{\f 13}$  with $M>0$ sufficiently large. The estimate (\ref{eq3.40})  for  $\Re z= \lambda < - \f 1 M |\Im z|^{\f 13}$ follows from the accretivity of $P_0$\\
 
 To show (\ref{eq3.40b}), notice that for $z = \lambda + i\mu$ with $\lambda, \mu \in \bR$, one has the identity
\[
\|\nabla_v u \|^2 + \f 1 4 \||v| u\|^2 = (\lambda + \f n 2) \|u\|^2  + \Re \w{(P_0-z) u, u}
\]
for $u \in D$.  One obtains from (\ref{eq3.40})  that
\begin{eqnarray*}
\lefteqn{\|\nabla_v R_0(z) w \|^2 + \f 1 4  \||v| R_0(z) w\|^2 }\\
& \le &  |\lambda + \f n 2|\| R_0(z)w\|^2  + \|u\| \| R_0(z)w\| \\
&\le &  C ( \f{|\lambda |}{ |z|^{\f 2 3}} + \f 1 { |z|^{\f 1 3}}) \|w\|^2, \quad w \in L^2, 
\end{eqnarray*}
  for $|\Im z| > M $ and $ \Re z \le \f 1 M |\Im z|^{\f 13}$.  (\ref{eq3.40b}) is proven. 
\ef

Remark that when $n\ge 3$,  Corollary \ref{cor3.6} shows the existence of the boundary values of the free resolvent $R_0(\lambda \pm i0)$ for any $\lambda>0$.  But so far it is still unclear what kind of upper bound one can expect for $R_0(\lambda \pm i0)$  as $\lambda \to +  \infty$. From Propositions \ref{prop3.7} and \ref{prop3.8}, one can deduce the following

\begin{cor} \label{cor3.9} Let $n=3$. Let $S_0(t)$, $t\ge 0$,  denote the semigroup generated by $-P_0$. Then for any integer $N\ge 1$ and $s> N + \f 1 2$, the following asymptotic expansion holds for some $\ep>0$
\be \label{eq3.56}
e^{-tP_0} = \sum_{k \in \bN,  2k+1 \le N} t^{-\f{2k+ 3}{ 2}} \beta_k G_{2k+1} + O(t^{-\f{N+2}{2}-\ep}), \quad t\to +\infty,
\ee
in $\vB(0,s, 0_s)$. Here $\beta_k$ is some non zero constant. In particular, the leading term $\beta_0G_1$ is a rank-one operator given by
\be
\beta_0G_1 = \f{1}{8 \pi^{\f 3 2} }\w{\gm_0, \; \cdot} \gm_0 : \vL^{2,s} \to \vL^{2, -s}
\ee
for any $s >\f 3 2$. Here $\gm_0(x,v) = 1\otimes\psi_0(v)$.
\end{cor}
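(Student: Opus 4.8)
The plan is to represent $S_0(t)=e^{-tP_0}$ by an inverse-Laplace (Bromwich) contour integral of the resolvent and to deform the contour so that the whole polynomially-decaying part of $S_0(t)$ is produced by an arbitrarily small loop around the threshold $0$, where Proposition~\ref{prop3.7} applies, while Proposition~\ref{prop3.8} is used to close the contour at high energy. Since $-P_0$ generates a bounded semigroup and $\sigma(P_0)=[0,+\infty[$ (Proposition~\ref{prop3.2}), for $t>0$ one has
\be
S_0(t)=\f{1}{2\pi i}\int_{\Gamma}e^{-tz}R_0(z)\,dz,
\ee
where $\Gamma$ is a contour lying in the resolvent set $\bC\setminus[0,+\infty[$ and encircling $[0,+\infty[$ (equivalently, after $z=-\lambda$, a Hankel contour around $\sigma(-P_0)=\,]-\infty,0]$).

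\textbf{Deformation of the contour.} Fix $\delta\in\,]0,1[$ and take $M>1$ as in Proposition~\ref{prop3.8}. I would deform $\Gamma$ into: (i) a loop $\gamma_0$ hugging $[0,\delta]$ at distance tending to $0$; (ii) two bounded pieces $\gamma_\pm$ leaving the endpoints of $\gamma_0$, remaining strictly inside the open resolvent set with $\Re z$ bounded below by a fixed positive constant, and reaching the two branches of $\{\Re z=\f{1}{M}|\Im z|^{\f{1}{3}}\}$; (iii) those parabolic branches, continued to $\Im z\to\pm\infty$. The deformation is legitimate: $R_0(z)$ is holomorphic off $[0,+\infty[$; the bounded pieces lie in a compact subset of the open resolvent set on which $R_0$ is continuous, hence bounded; and the arcs at infinity vanish because $\|R_0(z)\|\le M|z|^{-\f{1}{3}}$ on the parabolic region by Proposition~\ref{prop3.8}, while $\|R_0(z)\|\le 1/|\Re z|$ for $\Re z<0$ by accretivity of $P_0$. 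Since $\delta<1$ and $M^{-2/3}<1$, this contour never comes near the thresholds $1,2,\dots$, so no analysis there is required.

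\textbf{Extracting the expansion.} On parts (ii)--(iii) one has $\Re z\ge c_1>0$, so $|e^{-tz}|\le e^{-c_1 t}$ on the bounded pieces, while on the parabola $|e^{-tz}|=e^{-\f{t}{M}|\Im z|^{\f{1}{3}}}$ and $\|R_0(z)\|\le M|z|^{-\f{1}{3}}$; the substitution $u=|\Im z|^{\f{1}{3}}$ then bounds the contribution of (ii)--(iii) by $O(e^{-ct})$ in $\vL^{2,s}\to\vL^{2,-s}$ for some $c>0$. On $\gamma_0$, insert Proposition~\ref{prop3.7}: for $s>N+\f{1}{2}$, $R_0(z)=\sum_{j=0}^{N}z^{\f{j}{2}}G_j+O(|z|^{\f{N}{2}+\epsilon})$ as $z\to0$, $z\notin\bR_+$, valid in $\vB(-1,s;1,-s)$ and hence as operators $\vL^{2,s}\to\vL^{2,-s}$. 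The integer powers $z^{k}$ take equal values on the two sides of the cut, so $e^{-tz}z^{k}$ cancels and contributes nothing; the half-integer powers $z^{\f{2k+1}{2}}$ with $2k+1\le N$ give, after collapsing $\gamma_0$ onto the cut and replacing $\int_0^\delta$ by $\int_0^\infty$ (the difference being $O(e^{-ct})$), the Hankel integrals
\be
\f{1}{2\pi i}\int_{\gamma_0}e^{-tz}z^{\f{2k+1}{2}}\,dz=\beta_k\,t^{-\f{2k+3}{2}}+O(e^{-ct}),\qquad\beta_k=\f{\Gamma(k+\f{3}{2})}{\pi i}\neq0,
\ee
and the remainder $O(|z|^{\f{N}{2}+\epsilon})$ contributes $O(t^{-\f{N+2}{2}-\epsilon})$. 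Together with the $O(e^{-ct})$ from (ii)--(iii) this gives (\ref{eq3.56}).

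\textbf{Leading term and main difficulty.} For $k=0$, $\beta_0=\Gamma(\f{3}{2})/(\pi i)=-i/(2\sqrt\pi)$; combining with $G_1=\f{i}{4\pi}\w{\gm_0,\cdot}\gm_0$ from (\ref{eq2.41}), where $\gm_0=1\otimes\psi_0$, yields $\beta_0G_1=\f{1}{8\pi^{3/2}}\w{\gm_0,\cdot}\gm_0$; and one checks that $\gm_0\in\vL^{2,-s}$ and that $\w{\gm_0,\cdot}$ is bounded on $\vL^{2,s}$ exactly for $s>\f{3}{2}$ (in dimension three, since $\psi_0$ is Schwartz in $v$ and $\w{x}^{-s}\in L^2(\bR^3_x)$ iff $s>\f{3}{2}$), which is the asserted range. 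The step I expect to be the main obstacle is the contour surgery in the second paragraph: nothing is known about $\|R_0(\lambda\pm i0)\|$ as $\lambda\to+\infty$, so one cannot simply deform onto the cut, and Proposition~\ref{prop3.8} is precisely what permits routing the contour through a region where $e^{-tz}$ decays while keeping the resolvent under control. With that in place, the threshold computation is a routine Watson-lemma argument and the remaining points (mapping properties in $\vL^{2,s}\to\vL^{2,-s}$, vanishing of the analytic coefficients) are standard.
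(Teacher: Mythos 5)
Your proposal is correct and takes essentially the same route as the paper: it represents $S_0(t)$ via the contour integral of Lemma \ref{lem5.2}, deforms the contour onto the cut near zero (as in Corollary \ref{cor5.3}), uses the high-energy pseudospectral bound of Proposition \ref{prop3.8} to control the parabolic branches, and inserts the low-energy expansion of Proposition \ref{prop3.7} into the jump $R_0(\lambda+i0)-R_0(\lambda-i0)$ to produce the Watson-lemma integrals; the computed constant $\beta_0=\Gamma(3/2)/(\pi i)=1/(2i\sqrt\pi)$ combined with the kernel $G_1=\tfrac{i}{4\pi}\w{\gm_0,\cdot}\gm_0$ from (\ref{eq2.41}) indeed gives $\beta_0 G_1=\tfrac{1}{8\pi^{3/2}}\w{\gm_0,\cdot}\gm_0$, matching the paper (this is exactly the computation the paper carries out for the full operator $P$ in the proof of Theorem \ref{th5.4}).
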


The proof of Corollary \ref{cor3.9} uses a representation of the free semigroup $e^{-t P_0}$ as contour integral of the resolvent $R_0(z)$ in the right half complex plane. See Section 4 for more details in the case $V \neq 0$ where we shall prove a similar result for the full KFP operator $P$ (see (\ref{eq1.24})).  In the final step of the proof of (\ref{eq1.24}), we shall apply Proposition \ref{prop3.11} below to compute the leading term. As a preparation for the proof of this proposition, we establish some formulae on the evolution of observables which may be of interest in themselves.\\

\begin{lemma} \label{lem3.10} Let $n \ge 1$. For $t\ge 0$ and $0 \le s \le t$, one has the following equalities as operators from $\vS(\bR^{2n}_{x,v})$ to $L^2(\bR_{x,v}^{2n})$:
\bea
e^{-(t-s)P_0}v_j e^{-sP_0} &=& e^{-t P_0} (v_j \cosh s - 2  \partial_{v_j} \sinh s + 2 (\cosh s -1) \partial_{x_j}) \label{v1} \\
e^{-(t-s)P_0}\partial_{v_j }e^{-sP_0} &=& -\f 1 2 e^{-t P_0} (  (v_j \sinh s - 2  \partial_{v_j} \cosh s   + 2 \partial_{x_j} \sinh s
 ))  \label{v2} \\
e^{-(t-s)P_0}x_j e^{-sP_0} &=& e^{-t P_0} (x_j+ v_j \sinh s - 2 (\cosh s -1)\partial_{v_j} \nonumber \\
& &  + 2 (\sinh s -s) \partial_{x_j})  \label{x1} 
\eea
\end{lemma}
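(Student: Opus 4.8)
The plan is to verify the three identities by the standard Heisenberg-type computation: introduce, for a fixed observable $A$ among $\{v_j, \partial_{v_j}, x_j\}$, the operator-valued function $s \mapsto F_A(s) = e^{(t-s)P_0} \, e^{-(t-s)P_0} A e^{-sP_0}$ — more precisely, set $G_A(s) = e^{sP_0} A e^{-sP_0}$ acting on $\vS(\bR^{2n}_{x,v})$, so that $e^{-(t-s)P_0} A e^{-sP_0} = e^{-tP_0} G_A(s)$, and it suffices to identify $G_A(s)$. Differentiating in $s$ gives the evolution equation $\frac{d}{ds} G_A(s) = e^{sP_0}[P_0, A] e^{-sP_0} = G_{[P_0,A]}(s)$, with initial condition $G_A(0) = A$. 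So the first step is to compute the commutators $[P_0, v_j]$, $[P_0, \partial_{v_j}]$, $[P_0, x_j]$ with $P_0 = v\cdot\nabla_x - \Delta_v + \frac14|v|^2 - \frac n2$. A direct calculation gives $[P_0, v_j] = -2\partial_{v_j}$, $[P_0, \partial_{v_j}] = \frac12 v_j - \partial_{x_j}$, and $[P_0, x_j] = v_j$. Hence the three observables together with the $x_j$ and $\partial_{x_j}$ (which commute with $P_0$, so $G_{x_j}, G_{\partial_{x_j}}$ would be constant if it weren't for the coupling) span a finite-dimensional space closed under $\mathrm{ad}_{P_0}$, and the system closes.

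The second step is to solve this linear constant-coefficient ODE system on the finite-dimensional span of $\{v_j, \partial_{v_j}, x_j, \partial_{x_j}, 1\}$. For the $v_j$–$\partial_{v_j}$ block one has $\frac{d}{ds}G_{v_j} = -2 G_{\partial_{v_j}}$ and $\frac{d}{ds}G_{\partial_{v_j}} = \frac12 G_{v_j} - G_{\partial_{x_j}}$, with $G_{\partial_{x_j}} \equiv \partial_{x_j}$ constant. The homogeneous part is a $2\times 2$ system with eigenvalues $\pm 1$, producing the $\cosh s$, $\sinh s$ coefficients; the inhomogeneous term $-\partial_{x_j}$ contributes the $(\cosh s - 1)\partial_{x_j}$ and $(\sinh s - s)\partial_{x_j}$ pieces after one integration. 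Imposing the initial conditions $G_{v_j}(0) = v_j$, $G_{\partial_{v_j}}(0) = \partial_{v_j}$, $G_{x_j}(0) = x_j$ (and using $\frac{d}{ds}G_{x_j} = G_{v_j}$ to get (\ref{x1}) by a further integration of (\ref{v1})) pins down all constants and yields exactly (\ref{v1}), (\ref{v2}), (\ref{x1}). One should double-check the arithmetic of the $\frac12$ and $-2$ factors, since the normalization of the harmonic part is $\frac14|v|^2$ rather than $\frac12|v|^2$.

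The main technical obstacle is not the algebra but the justification that these formal manipulations are legitimate on the stated domains. Since $P_0$ is not selfadjoint, one must check that $e^{-sP_0}$ maps $\vS(\bR^{2n}_{x,v})$ into a domain on which $v_j$, $\partial_{v_j}$, $x_j$ and the relevant commutators act, that $s\mapsto G_A(s)u$ is differentiable in $L^2$ for $u\in\vS$, and that the resulting identity, a priori valid on a dense set, holds as an equality of operators $\vS \to L^2$. This can be handled using the hypoellipticity and the mapping properties recorded in Proposition \ref{prop2.1} and the weighted estimates behind Theorem \ref{th3.3}: $e^{-sP_0}$ preserves enough regularity and decay (one works with the Fourier-transformed picture $\hat P_0(\xi)$, a family of shifted harmonic oscillators analyzed in Appendix A, where the action on Hermite-type data is explicit) to run the argument. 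Alternatively, one verifies the identity first on the Schwartz dense core by checking both sides applied to $u\in\vS$ satisfy the same $L^2$-valued ODE in $s$ with the same initial value, and invokes uniqueness for that ODE. I would present the computation of the commutators and the ODE solution as the substance, and relegate the domain bookkeeping to a remark pointing to Appendix A.
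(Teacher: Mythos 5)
Your approach coincides with the paper's: compute $[P_0,v_j]$, $[P_0,\partial_{v_j}]$, $[P_0,x_j]$, note that the system closes on the span of $\{v_j,\partial_{v_j},x_j,\partial_{x_j}\}$, solve the resulting constant-coefficient linear ODE in $s$, and obtain (\ref{x1}) by one further integration of (\ref{v1}). Two points need repair. First, a sign: $[P_0,\partial_{v_j}]=-\tfrac12 v_j-\partial_{x_j}$ (since $[\tfrac14|v|^2,\partial_{v_j}]=-\tfrac12 v_j$ and $[v\cdot\nabla_x,\partial_{v_j}]=-\partial_{x_j}$), not $\tfrac12 v_j-\partial_{x_j}$ as you wrote. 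With your sign one gets $\tfrac{d^2}{ds^2}G_{v_j}=-G_{v_j}+2\partial_{x_j}$, i.e.\ characteristic roots $\pm i$ and trigonometric rather than hyperbolic coefficients, contradicting the statement of the lemma; your own assertion that the eigenvalues are $\pm1$ is consistent only with the corrected commutator, which yields $\tfrac{d^2}{ds^2}G_{v_j}=G_{v_j}+2\partial_{x_j}$ --- exactly the scalar equation $f''(s)=f(s)+2\partial_{x_j}e^{-tP_0}$ that the paper solves with initial data $f(0)=e^{-tP_0}v_j$, $f'(0)=-2e^{-tP_0}\partial_{v_j}$.

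Second, the object $G_A(s)=e^{sP_0}Ae^{-sP_0}$ is not defined: $\sigma(P_0)=[0,+\infty)$ and $-P_0$ generates only a forward contraction semigroup, so $e^{sP_0}$ for $s>0$ does not exist as a bounded operator and the factorization $e^{-(t-s)P_0}=e^{-tP_0}e^{sP_0}$ is illegitimate. Your fallback is the correct route and is precisely what the paper does: differentiate $f(s)=e^{-(t-s)P_0}Ae^{-sP_0}$ directly (this uses only the forward semigroup on both factors) and solve the resulting ODE. The domain bookkeeping is as you indicate: Proposition \ref{prop2.1} gives $A^k e^{-tP_0}u\in L^2$ for $u\in\vS$ and $A\in\{v_j,\partial_{v_j},\partial_{x_j}\}$, while control of the unbounded weight $x_j$ is obtained from the commutator identity (\ref{x2}), which the paper derives from (\ref{v1}) before proving (\ref{x1}).
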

\pf For fixed $t>0$, set $f(s) = e^{-(t-s)P_0}v_j e^{-sP_0}$, $0 \le s \le t$. Proposition \ref{prop2.1} shows that for $u\in\vS$, $A^k e^{-tP_0}u \in L^2$ for any $k\in \bN$, where $A$ may be one of the operators $v_j, \partial_{v_j}, \partial_{x_j}$, $j=1, \cdots, n$.  As  operators from $\vS$ to $L^2$, one has:
\bea 
f'(s) &=& e^{-(t-s)P_0}[P_0, v_j] e^{-sP_0}  = -2 e^{-(t-s)P_0}\partial_{v_j }e^{-sP_0} \label{v11} \\
f''(s)& = & -2 e^{-(t-s)P_0}\left[\f{v^2} 4 + v\cdot\partial_{x}, \; \partial_{v_j}\right] e^{-sP_0}  =  f(s) + 2 \partial_{x_j} e^{-tP_0} \label{v12}
\eea
This shows that $f(s) = C_1 e^s + C_2 e^{-s} - 2  \partial_{x_j} e^{-tP_0} $.  $C_1$, $C_2$ can be determined by the initial data $f(0)= e^{-tP_0}v_j$ and $f'(0) = -2 e^{-tP_0} \partial_{v_j }$:
\[
C_1 = e^{-tP_0} ( \f 1 2 v_j - \partial_{v_j} +  \partial_{x_j}) , \quad C_2 =  e^{-tP_0} ( \f 1 2 v_j + \partial_{v_j}  + \partial_{x_j}) .
\]
This proves (\ref{v1}).  (\ref{v2}) follows from (\ref{v1}) and the equality
\[
 e^{-(t-s)P_0}\partial_{v_j }e^{-sP_0} = -\f 1 2 f'(s).
\]

To prove (\ref{x1}), one can check the following commutator relation: 
\bea \label{x2}
[ e^{-tP_0}, x_j] &=& - \int_0^t e^{(t-s)P_0} v_j e^{-s P_0} ds \\
& =& -e^{-tP_0} \left( v_j \sinh t   - 2 (\cosh t -1)  \partial_{v_j} + 2 (\sinh t -t) \partial_{x_j}\right), \nonumber
\eea
which means that the commutator initially defined as forms on $\vS\times \vS$ extends to operators from $\vS$ to  $L^2$ and the equality (\ref{x2}) holds. A successive application of this commutator relation shows that if $u\in \vS$, then $\w{x}^re^{-tP_0}u \in L^2$ for any $r \in \bR$.  It follows from (\ref{v1}) that 
\bea
e^{-(t-s)P_0}x_j e^{-sP_0} &=& e^{-tP_0}x_j + \int_0^s e^{-(t-\tau) P_0}v_j e^{-\tau P_0} \; d \tau  \nonumber \\
 &= & e^{-tP_0}(x_j+ v_j \sinh s - 2 (\cosh s -1)\partial_{v_j}  + 2 (\sinh s -s) \partial_{x_j}). \nonumber
\eea
This proves (\ref{x1}).
\ef

\begin{prop} \label{prop3.11} Let $n=3$.  Assume that $u \in \vL^{2, -s}$ for some $ \f 3 2 < s < 2$ such that  there exists some constant $c_0$  and $\psi\in L^2_v$ with  $(-\Delta_v + v^2)\psi \in L_v^2$ such that
\[
u(x, v) - c_0 (1\otimes \psi)  \in \vL^{2, \delta}(\bR_{x,v}^{6})
\]
for some $\delta>0$. Then on has
\be
\lim_{\lambda \to 0_-}\lambda R_0(\lambda) u = - c_0 \w{\psi_0, \psi}_{L^2_v}\gm_0
\ee
in $\vL^{2,-s}$ for any $s > \f 3 2$, where $\gm_0 = 1\otimes \psi_0(v)$.
\end{prop}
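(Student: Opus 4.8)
\noindent\emph{Proof proposal.} The plan is to split the datum as $u = c_0\,(1\otimes\psi) + w$ with $w := u - c_0(1\otimes\psi) \in \vL^{2,\delta}(\bR^6)\subset L^2(\bR^6)$, and to handle the two pieces by entirely different mechanisms: the remainder $w$ decays in $x$ and will drop out in the limit, whereas the $x$-independent model part $1\otimes\psi$ carries the whole answer, through the flat threshold eigenfunctions of $P_0$. Throughout $\lambda<0$, $\lambda\to 0_-$. The starting point is the resolvent splitting of Proposition~\ref{prop3.5} (with $l=0$),
\[
R_0(\lambda) = b_0^w(v,D_x,D_v)(-\Delta_x-\lambda)^{-1} + r_0(\lambda),\qquad b_0^w = \chi(D_x)\,\Pi_0^{D_x},
\]
in which $b_0^w$ and $(-\Delta_x-\lambda)^{-1}$ commute (both being functions of $D_x$ only), and $r_0(\lambda)$ acts on the Fourier side in $\xi$ as multiplication by $r_0(\lambda,\xi)\in\vB(L^2(\bR^n_v))$, holomorphic in $\lambda$ near $0$, strongly continuous in $\xi$, uniformly bounded. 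One extra fact is needed, and is read off from the proof of Proposition~\ref{prop3.5}: since $\chi\equiv 1$ near $\xi=0$, the $\chi_1$-contribution to $r_0$ vanishes there and one is left with the explicit fibre $r_0(\lambda,0)=(\hat P_0(0)-\lambda)^{-1}(1-\Pi_0^0)$, where $\hat P_0(0)=-\Delta_v+\tfrac{v^2}{4}-\tfrac n2$ is selfadjoint with a spectral gap on $\Ran(1-\Pi_0^0)$ and $\Pi_0^0=\w{\psi_0,\cdot}_{L^2_v}\psi_0$.

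For the remainder I would write $\lambda R_0(\lambda)w = b_0^w\big(\lambda(-\Delta_x-\lambda)^{-1}w\big) + \lambda\,r_0(\lambda)w$. The second term tends to $0$ in $L^2(\bR^6)$ since $r_0(\lambda)$ is uniformly bounded there. For the first, the scalar Fourier multiplier $\xi\mapsto\lambda(|\xi|^2-\lambda)^{-1}$ has modulus $\le 1$ and tends to $0$ pointwise, so dominated convergence gives $\lambda(-\Delta_x-\lambda)^{-1}w\to 0$ in $L^2(\bR^6)$, and $b_0^w$ is bounded on $L^2$; hence $\lambda R_0(\lambda)w\to 0$ in $L^2\hookrightarrow\vL^{2,-s}$ for every $s\ge 0$.

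For the model part the key observation is that the transport term $v\cdot\na_x$ annihilates any $x$-independent function, so each of the three operators above, applied to a datum $1\otimes g$, merely evaluates its $\xi$-symbol at $\xi=0$: thus $b_0^w(1\otimes\psi)=1\otimes\Pi_0^0\psi=\w{\psi_0,\psi}_{L^2_v}\gm_0$ (using $\chi(0)=1$), $(-\Delta_x-\lambda)^{-1}(1\otimes\psi_0)=-\lambda^{-1}(1\otimes\psi_0)$ (since $(-\Delta_x-\lambda)^{-1}1=|\lambda|^{-1}$), and $r_0(\lambda)(1\otimes\psi)=1\otimes(\hat P_0(0)-\lambda)^{-1}(1-\Pi_0^0)\psi$. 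Putting these together and using the commutation of $b_0^w$ with $(-\Delta_x-\lambda)^{-1}$,
\[
\lambda R_0(\lambda)(1\otimes\psi) = -\w{\psi_0,\psi}_{L^2_v}\gm_0 \;+\; \sum_{l\ge1}\frac{\lambda}{l-\lambda}\,(1\otimes\pi_l\psi),
\]
where $\pi_l$ is the orthogonal projection onto the level-$l$ Hermite eigenspace of $\hat P_0(0)$ and $(1-\Pi_0^0)\psi=\sum_{l\ge1}\pi_l\psi$. Because the $\pi_l\psi$ are mutually orthogonal, $|\lambda/(l-\lambda)|\le|\lambda|$ for $l\ge 1$, $\lambda<0$, and $\int_{\bR^3}\w{x}^{-2s}\,dx<\infty$ for $s>\tfrac32$, the tail has $\vL^{2,-s}$-norm at most $C_s|\lambda|\,\|\psi\|_{L^2_v}\to 0$. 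Adding the two contributions gives $\lambda R_0(\lambda)u = c_0\lambda R_0(\lambda)(1\otimes\psi)+\lambda R_0(\lambda)w \to -c_0\w{\psi_0,\psi}_{L^2_v}\gm_0$ in $\vL^{2,-s}$ for every $s>\tfrac32$, which is the claim.

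The hypothesis $(-\Delta_v+v^2)\psi\in L^2_v$ serves to place $1\otimes\psi$, hence $u$, in $\vH^{2,-\sigma}$ with $\sigma>\tfrac32$, the natural space on which $R_0(\lambda)$ and the fibrewise identities above are meaningful. The one point I expect to have to argue carefully is exactly this: legitimizing the evaluation-at-$\xi=0$ of the multipliers $(-\Delta_x-\lambda)^{-1}$ and $r_0(\lambda)$ on the non-$L^2$ function $1\otimes\psi$, which comes down to strong continuity of these operator-valued symbols at the origin (immediate for $(|\xi|^2-\lambda)^{-1}$, and for $r_0(\lambda,\xi)$ a consequence of the uniform bounds on $\|J_2(t,\xi)\|$ near $\xi=0$ established in the proof of Proposition~\ref{prop3.5}). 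Everything else is routine bookkeeping.
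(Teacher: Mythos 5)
Your argument is correct in substance and arrives at the right answer, but it follows a genuinely different route from the paper for both halves of the splitting $u=c_0(1\otimes\psi)+w$. For the remainder $w$, the paper uses a complex-interpolation bound $\|R_0(\lambda)\|_{\vB(0,s';0,-s)}\le C|\lambda|^{-1+\ep}$ to get $\lambda R_0(\lambda)w=o(1)$; your dominated-convergence argument on the scalar multiplier $\lambda(|\xi|^2-\lambda)^{-1}$ is more elementary and in fact only uses $w\in L^2$, not the weight $\delta>0$. For the model part, the paper evaluates $\lambda\, b_0^w(-\Delta_x-\lambda)^{-1}(1\otimes\psi)$ via the explicit integral kernel (\ref{kernel}) (finding it exactly equal to $-\w{\psi_0,\psi}\psi_0$ for every $\lambda<0$, by the same $\int\frac{e^{-\sqrt{|\lambda|}|x'|}}{4\pi|x'|}dx'=|\lambda|^{-1}$ identity you use), and controls $\lambda r_0(\lambda)(1\otimes\psi)$ only as $O(|\lambda|)$ via a double-commutator estimate $\|\w{x}^{-2}r_0(\lambda)\w{x}^{2}f\|\le C(\|f\|+\|H_0f\|)$. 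Your fibre computation $r_0(\lambda,0)=(\hat P_0(0)-\lambda)^{-1}(1-\Pi_0^0)$ is correct (at $\xi=0$ the $\chi_1$-piece vanishes and $\chi(0)=1$), and it yields a more explicit, diagonalized form of the same $O(|\lambda|)$ tail.

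The one point you flag yourself is the real burden: extending the identity $R_0(\lambda)=b_0^w(-\Delta_x-\lambda)^{-1}+r_0(\lambda)$, proved on $L^2$, to the non-decaying datum $1\otimes\psi\in\vL^{2,-s}\setminus L^2$, and identifying $r_0(\lambda)(1\otimes\psi)$ with $1\otimes r_0(\lambda,0)\psi$. ``Strong continuity of the operator-valued symbol at $\xi=0$'' is not by itself enough to define the action on a function whose Fourier transform is a delta at the origin and to show it agrees with the extension of $R_0(\lambda)$ to weighted spaces; you need a uniform weighted bound on $r_0(\lambda)$ (or an approximation $\chi(x/R)\otimes\psi\to1\otimes\psi$ in $\vL^{2,-s}$ together with uniform-in-$\xi$ control of $r_0(\lambda,\xi)$ and its derivatives). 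This is precisely the role of the paper's commutator argument, and of the hypothesis $(-\Delta_v+v^2)\psi\in L^2_v$, which puts $\w{x}^{-s}\otimes\psi$ in $D(H_0)$. With that step supplied, your proof closes.
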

\pf For $\lambda <0$, $R_0(\lambda)$ maps $\vL^{2,-s}$ to $\vL^{2,-s}$ for any $s$ and one has
\[
\|R_0(\lambda)\|_{\vB(0,0; 0,0)} \le \frac{C}{|\lambda|}, \quad  \|R_0(\lambda)\|_{\vB(0,s'; 0,-s)} \le C_{s,s'}
\]
if $s,s' >\f 1 2$ with $s+s'>2$, uniformly in $\lambda \in ]-1, 0[$. An argument of complex interpolation shows that
for any $s, s' >0$, there exists $\ep>0$ such that
\be
 \|R_0(\lambda)\|_{\vB(0,s'; 0,-s)} \le C |\lambda|^{-1 +\ep}.
\ee
This shows that  
\be \label{eq2.72}
\lambda R_0(\lambda) (u - c_0 (1\otimes \psi)) = o(1), \quad \mbox{ as } \lambda \to 0_-
\ee
in $\vL^{2,-s}$ for any $s>0$. To prove Proposition \ref{prop3.11}, it suffices to study the limit $\lim_{\lambda \to 0_-}\lambda R_0(\lambda) (1\otimes \psi)$. By Proposition \ref{prop3.5}, the resolvent $R_0(\lambda)$ can be decomposed as
\be
R_0(\lambda) = b_0^w(v,D_x,D_v) (-\Delta_x- \lambda)^{-1} + r_0(\lambda)
\ee 
where 
\[
b_0(v, \xi, \eta) = 2^{\f 3 2}\chi(\xi)  e^{-v^2-\eta^2 + 2i v\cdot\xi + 2\xi^2}
\]
with $\chi$  a smooth cut-off around $0$ with compact support, and $r_0(\lambda)$ is uniformly bounded  as operators in $L^2$ for $\lambda < a$ for some $a\in ]0, 1[$. \\

We claim that the following estimate holds uniformly in $\lambda <a$:
\be \label{eq3.51}
\|\w{x}^{-2} r_0(\lambda) \w{x}^{2} f\| \le C (\|f\| + \|H_0 f\|)
\ee
for any $f \in D(H_0 ) $,  where $H_0 = -\Delta_v + v^2 -\Delta_x$.
 Remark that $r_0(\lambda)$ is a pseudodifferential operator in $x$-variables: $r_0(\lambda) = r_0(\lambda, D_x)$, with operator-valued symbol  $r_0(\lambda, \xi)\in \vB(L^2_v)$ (see (\ref{R00})). The proof of Proposition \ref{prop3.5} shows that $r_0(\lambda, \xi)$ can be decomposed as
\[
r_0(\lambda, \xi) =r_{0,1}(\lambda,\xi) + r_{0,2}(\lambda, \xi)
\]
 with
\[
r_{0,1}(\lambda, \xi) = \int_{0}^T \chi_1(\xi) e^{-t(\hat{P}_0(\xi) -\lambda)} dt
\]
for some fixed $T \ge 3$ and $r_{0,2}(\lambda,\xi)$ is smooth and rapidly decreasing in $\xi$, uniformly for $\lambda <a$ (see (\ref{eq3.17}), (\ref{eq3.19}) and (\ref{eq3.20})).   Since $r_{0,2}(\lambda, D_x)$ is a convolution in $x$-variables with a smooth and rapidly decreasing kernel, one has $\w{x}^{-s} r_{0,2}(\lambda, D_x)\w{x}^s$ is uniformly bounded for any $s$.
 To study $
r_{0, 1}(\lambda, D_x) $, we use commutator techniques. One writes
\[
\w{x}^{-2} r_{0, 1}(\lambda, D_x) x^2 = \w{x}^{-2}  \left(x^2 r_{0, 1}(\lambda, D_x) + \sum_{j=1}^n [ \; [r_{0, 1}(\lambda, D_x), x_j], x_j]\right).
\]
Making use of  (\ref{x2}), one can calculate
\bea
\lefteqn{[r_{0,1}(\lambda, D_x), x_j]} \nonumber \\
&= & -i(\partial_{\xi_j}\chi_1)(D_x) \int_{0}^T  e^{-t(\hat{P}_0(\xi) -\lambda)} dt  +
\chi_1(D_x) \int_{0}^T  [ e^{-t(\hat{P}_0(\xi) -\lambda)}, x_j] dt    \nonumber \\
&=&  -i(\partial_{\xi_j}\chi_1)(D_x) \int_{0}^T  e^{-t(\hat{P}_0(\xi) -\lambda)} dt  \\
& & +
\chi_1(D_x) \int_{0}^T e^{-t(P_0-\lambda)} (v_j \sinh t - 2 (\cosh t-1)\partial_{v_j} 
 + 2 (\sinh t - t) \partial_{x_j}) dt  \nonumber
\eea
Since $P_0$ is accretive, one obtains
 \[
 \|[r_{0,1}(\lambda, D_x), x_j] f\| \le C (\|f\|+ \|v_jf\| + \|\partial_{v_j} f \| + \|\partial_{x_j}f\|), \quad f\in \vS. 
 \]
Similarly, one can check that the second commutator  $ [ \; [r_{0, 1}(\lambda, D_x), x_j], x_j]$ verifies
\[
\|  [ \; [r_{0, 1}(\lambda, D_x), x_j], x_j] f\| \le C(\|f\| + \|H_0 f\|).
\]
This proves that 
\be \label{eq3.53}
\|\w{x}^{-2} r_{0,1}(\lambda) \w{x}^{2} f\| \le C  (\|f\| + \|H_0 f\|)
\ee
for any $f \in D(H_0 ) $, which gives   
(\ref{eq3.51}). It follows from (\ref{eq3.51}) and the uniform boundedness of $\|r_0(\lambda)\|$ for $\lambda <a$, $a>0$,  that for any $ s\in[0, 2]$
\be
\|\w{x}^{-s} r_0(\lambda) \w{x}^{s} f\| \le C (\|f\| + \|H_0 f\|), 
\ee
Since $1\otimes\psi = \w{x}^s f_s$ with $f_s = \w{x}^{-s} \otimes\psi  \in D(H_0)$ for any $s>\f 3 2$, it follows that 
\be
 \lambda r_0(\lambda)(1\otimes\psi) = O(|\lambda|), \quad  \lambda \to 0,
 \ee \label{eq2.79}
  in $ \vL^{2,-s}$  for any $s> \f 3 2$, which together with (\ref{eq2.72}) implies that
 \be
 \lambda R_0(\lambda) u  - c_0 \lambda  b_0^w(v,D_x,D_v) (-\Delta_x- \lambda)^{-1} (1 \otimes \psi) = o(1)
 \ee
in $\vL^{2,-s}$, $s< \f 3 2$, as $\lambda \to 0_-$. This estimate is valid in all dimensions.\\

Finally we calculate  $ \lambda  b_0^w(v,D_x,D_v) (-\Delta_x- \lambda)^{-1} (1 \otimes \psi)$  which is  independent of $\lambda <0$ in dimension $n=3$.  In fact, according to (\ref{kernel}) one has for $\lambda <0$ and $n=3$
\bea 
\lefteqn{\lambda  b_0^w(v,D_x,D_v) (-\Delta_x- \lambda)^{-1} (1 \otimes \psi)}  \\
& =& \int_{\bR^9}\f{\lambda  e^{-\sqrt{|\lambda|}|y-(x-x')|}}{4\pi |y -(x-x')|} \Phi(v,v',y) \psi(v')\; dy dx' dv'
\nonumber \\
&= & \int_{\bR^3} \f{\lambda e^{-\sqrt{|\lambda|}|x'|}}{4\pi |x'|} dx'  \int_{\bR^6} \Phi(v,v',y) \psi(v')\; dy  dv'
\nonumber \\
&=& -  \int_{\bR^3_{v'}}  (\int_{\bR^3}\Phi(v,v',y) \; dy )\psi(v')\;  dv' \nonumber \\
&=&  - \chi(0) \w{\psi_0, \psi}_{L^2_v} \psi_0(v) = - \w{\psi_0, \psi}_{L^2_v} \psi_0(v).  \nonumber
\eea
This finishes the proof of Proposition \ref{prop3.11}.
\ef

\sect{Threshold spectral properties of the Kramers-Fokker-Planck operator}

Consider the Kramers-Fokker-Planck operator $ P=v\cdot\na_x-\na_x V(x)\cdot\na_v-\Dl_v+\f{1}{4}|v|^2-\f{n}{2}$ with a $C^1$ potential $V(x)$ satisfying
\begin{equation} \label{eq4.1}
|\nabla_x V(x)|\le C \w{x}^{-\rho-1}, \quad x\in \bR^n,
\end{equation}
 for some $\rho \ge -1$.  In fact, it is sufficient to suppose that $V(x)$ is a Lipschitz function satisfying (\ref{eq4.1}) about everywhere and one can even include some mild local singularities in $\nabla V(x)$, by using hypoellipiticiy of the operator. But we will not care about such generalization.
  $P$ defined on $D(P)= D(P_0)$ is maximally dissipative. By Proposition \ref{prop2.1},  if $\rho >-1$, $\na_x V(x)\cdot\na_v$ is  relatively compact with respect to $P_0$. Consequently, the spectrum of $P$ is discrete outside $\bR_+$ and the complex eigenvalues of $P$ 
may only accumulate towards points in $\bR_+$.  The thresholds of $P$ are the eigenvalues of $-\Dl_v+\f{1}{4}|v|^2-\f{n}{2}$ which are equal to $\bN$. To be simple, we study only the threshold zero in dimension $n=3$ under the condition $\rho>1$.

Denote $W =-\na_x V(x)\cdot\na_v$. One has $W^* = - W$ and
\[
(P-z)R_0(z) = 1 + R_0(z)W = 1+ G_0W + O(|z|^\ep), \quad \ep>0,
\]
 in $\vB(0, -s; 0,-s)$
for  $1<s < (1+\rho)/2$ and $z$ near $0$ and $z\not\in \bR_+$.

\begin{lemma}\label{lem4.1}  Assume $n=3$ and  let (\ref{eq4.1}) be satisfied with $\rho >1$ ({\it i.e.}, the potential is of short-range). Then, $G_0W$ is a compact operator in $\vL^{2,-s}$ for $1<s< (1+\rho)/2 $
and
\be \label{eq4.2}
\ker_{\vL^{2,-s}} (1 + G_0W) =\{0\}.
\ee
\end{lemma}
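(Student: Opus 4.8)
The plan is to prove the two assertions of Lemma \ref{lem4.1} separately, with the compactness statement serving as a warm-up for the harder injectivity claim. For compactness, I would write $G_0 W = (F_0 + F_1)W$ using the decomposition (\ref{G0}) from Proposition \ref{prop3.7}. The operator $W = -\nabla_x V(x)\cdot\nabla_v$ gains one $v$-derivative and carries a decaying factor $\nabla V(x) = O(\w{x}^{-1-\rho})$; since $\rho>1$ we have $1-\rho < 0$, so the two weights $\w{x}^{-s}\cdot\nabla V(x)\cdot\w{x}^{s}$ still decay like $\w{x}^{2s-1-\rho}$, which is integrable-to-the-square for $s<(1+\rho)/2$. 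Combined with the smoothing in the $v$-variables provided by $F_0$ (Hilbert--Schmidt-type kernel $\psi_0(v)\psi_0(v')/(4\pi|x-x'|)$ times $\nabla_{v'}$, which produces $\psi_0'(v')$, still rapidly decaying) and by $F_1$ (which lies in $\vB(-1,s;1,-s')$, hence absorbs one $v$-derivative), one shows $G_0W$ factors through a compact embedding in the $x$-variable. Concretely I would exhibit $G_0W = A\circ J \circ B$ with $A,B$ bounded on the relevant weighted spaces and $J$ a compact operator (e.g. $\w{x}^{-s}(-\Delta_x+1)^{-1}\w{x}^{-s'}$-type map in three dimensions), using Proposition \ref{prop3.7} to control the $v$-behaviour.

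For the injectivity statement (\ref{eq4.2}), the strategy is the standard threshold-analysis argument: suppose $u \in \vL^{2,-s}$ with $(1+G_0W)u = 0$, i.e. $u = -G_0 W u$. First I would bootstrap the regularity and decay of $u$. Since $Wu \in \vL^{2,s'}$ for suitable $s'$ (because $u$ has limited decay but $\nabla V$ decays fast), $G_0 = F_0+F_1$ applied to it gives that $u$ actually decays better and lies in $\vH^{1,-s_1}$ for some smaller $s_1$, and the leading part of $u$ is governed by $F_0 W u$, whose kernel is $\psi_0(v)\psi_0(v')/(4\pi|x-x'|)$. Thus $u(x,v) = c\,\psi_0(v) + (\text{better-decaying remainder})$ where $c = (4\pi)^{-1}\int \psi_0(v')(Wu)(x',v')\,dx'dv'$ is, after integration by parts in $v'$, a constant; more importantly the profile in $v$ is exactly $\psi_0$. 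The point of the decomposition $G_0 = F_0+F_1$ with $F_1$ mapping into $\vB(-1,s;1,-s')$ for $s+s'>3/2$ is precisely to isolate this $\psi_0$-profile leading behaviour. Then one checks $(P-0)u = 0$ in the distributional sense: since $u = -G_0Wu = -R_0(0+i0)Wu$ and $(P_0)R_0(0) = 1$, one gets $P_0 u = -Wu$, i.e. $Pu = 0$.

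The main obstacle — and the step I expect to require the supersymmetric structure alluded to in the introduction — is then to show that $Pu = 0$ with $u$ of this asymptotic type forces $u = 0$. Here I would use the factorization of the KFP operator: write $P_0 = b^* b$-type (with $b = \nabla_v + \tfrac{v}{2}$, $b^* = -\nabla_v + \tfrac v2$) modulo the transport term, so that $\Re\w{Pu,u} = \|bu\|^2 + (\text{transport contribution which vanishes by skew-adjointness})$. From $Pu=0$ one extracts $bu = 0$ in an appropriate weighted space, which pins down the $v$-dependence to be a multiple of $\psi_0(v)$ pointwise in $x$: $u(x,v) = w(x)\psi_0(v)$. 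Substituting back into $Pu=0$ and projecting onto $\psi_0$ in $L^2_v$, the transport term $v\cdot\nabla_x$ and the potential term drop out (they map $\psi_0$ out of its own span), leaving effectively $-\Delta_x w = 0$ or a closely related equation; combined with the decay $w \in \vL^{2,-s}\cap(\text{better})$ in dimension three — where $-\Delta_x$ has no zero-energy bound states or resonances — one concludes $w = 0$, hence $u=0$. The delicate points are justifying the integration by parts / factorization identities for $u$ that only lies in a weighted space (needs the a priori hypoellipticity from Proposition \ref{prop2.1} and a cutoff-and-limit argument) and verifying that the projected equation genuinely reduces to the free Laplacian in three dimensions so that the classical absence of zero-resonances applies.
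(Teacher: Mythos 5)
The compactness argument and the overall skeleton of your injectivity proof -- derive $Pu=0$, exploit the supersymmetric structure $\Re\langle Pu,u\rangle=\|(\nabla_v+\tfrac v2)u\|^2$ via a cutoff-and-limit argument, and conclude $u(x,v)=C(x)\psi_0(v)$ -- match the paper. However, your final step contains a genuine error. After you have $u=C(x)\psi_0(v)$, projecting $Pu=0$ onto $\psi_0$ in $L^2_v$ does \emph{not} leave ``$-\Delta_x w=0$'': there is no $\Delta_x$ anywhere in the KFP operator. With $u=C(x)\psi_0(v)$ one has
\[
Pu=\Bigl(v\cdot\nabla_x C(x)+\tfrac12\,\bigl(v\cdot\nabla V(x)\bigr)C(x)\Bigr)\psi_0(v),
\]
and since $v\psi_0\perp\psi_0$, the $\psi_0$-projection of this is identically zero -- it yields no information at all. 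The equation you actually must exploit is the pointwise (in $v$) vanishing of the bracket, which is a \emph{first-order} transport/ODE in $x$, namely $\nabla_x C+\tfrac12 C\,\nabla V=0$, whose solutions are $C(x)=c_0 e^{-V(x)/2}$, i.e. $u=c_0\gm$. The contradiction then comes not from ``no zero-resonance of $-\Delta_x$ in dimension three'' but from the observation that $\gm=e^{-V/2}\psi_0$ has \emph{no} decay in $x$ (for bounded $V$) and therefore cannot lie in $\vL^{2,-s}$ for $\tfrac12<s<\tfrac32$, forcing $c_0=0$.

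Two further inaccuracies. First, after the bootstrap, the leading part of $u$ is not ``$c\,\psi_0(v)$ with $c$ constant'' but rather $C(u)\psi_0(v)/|x|$ (a $1/|x|$ tail in $x$) -- the constant profile $c\psi_0$ would fail to be in any $\vL^{2,-s}$ with $s<\tfrac32$ and would short-circuit the whole analysis. Second, in the cutoff step the boundary term $\langle \tfrac{v\cdot\hat x}{R}\chi'(|x|/R)u,u_R\rangle$ does not vanish for free; it is crucial that the leading part $w(x,v)=C(u)\psi_0(v)/|x|$ is \emph{even} in $v$, so that its self-interaction term drops exactly, and the remaining cross-terms with the better-decaying remainder $r$ go to zero as $R\to\infty$ precisely because of the weighted-$L^2$ gain established in the bootstrap. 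Without the parity observation, the cutoff argument does not close.
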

\pf For $1<s< (1+\rho)/2$, take $1<s' <s$.   Proposition \ref{prop3.7} (and the arguments used in the proof of Corollary \ref{cor3.6}) shows that $G_0W\in \vB(0,-s; 1, -s')$. The injection $\vH^{1, -s'}$ into $\vL^{2,-s}$ is compact when $s'<s$. Therefore $G_0W$ is a compact operator in $\vL^{2,-s}$. \\

Let $u \in \vL^{2,-s}$ with $u + G_0Wu =0$.
 Then by the hypoellipticity of $P_0$,  one can check that $u \in \vH^{2, -s}$ for any $s>1$ and $Pu=0$. According to (\ref{G0}),
 $u$ can be decomposed as
\[
u=- F_0 W u - F_1 W u . 
\]
 Since $Wu \in \vH^{-1, \rho +1-s}$ and $F_0 \in \vB(-1,s; 1, -s')$ for any $s, s'>\f 1 2$ and $s+s'>2$,  it follows that $u$ is in fact in $\vH^{2, -s}$ for any $s >\f 1 2$. Using the condition $\rho>1$ and repeating the above argument,  we deduce that  $F_1 W u \in L^2$. By (\ref{F0}), one can calculate the asymptotic behavior of  $F_0 W u$ for $|x|$ large and obtains that
\be \label{eq4.4}
u(x, v) =w(x,v)  + r(x, v),
\ee
 where $\w{v}^2r \in L^2(\bR^6_{x,v})$ and $w(x,v) = C(u)\frac{e^{-\f{v^2}{4}}}{|x|}$ with
\be \label{eq4.5}
 C(u) = \int\int_{\bR^6}  \frac{e^{-\frac{v^2 }{4}}}{2  (2\pi)^{\f 5 2} } \nabla_x V(x) \cdot\nabla_v u(x, v) \; dx dx. \ee
Let $\chi \in C_0^\infty(\bR)$ be a cut-off with $\chi(\tau) =1$ for $ |\tau| \le 1$ and $\chi(\tau) =0$ for $|\tau| \ge 2$ and $0 \le \chi(\tau) \le 1$. Set $\chi_R(x) = \chi(\frac{|x|}{R}), R>1$ and $x\in\bR^3$  and $u_R(x) = \chi_R(x) u(x, v)$.
Then one has
\[
P u_R = \frac{v\cdot\hat x}{R} \chi'(\frac{|x|}{R}) u.
\]
Taking the real part of the equality $\w{ P u_R, u_R}= \w{ \frac{v\cdot\hat x}{R} \chi'(\frac{|x|}{R}) u, u_R}$, one obtains
\be  \label{eq4.6}
\int\int_{\bR^6} |(\partial_v + \frac v 2)u(x,v)|^2 \chi(\frac{|x|}{R})^2 \; dx dv =  \w{ \frac{v\cdot\hat x}{R} \chi'(\frac{|x|}{R}) u, u_R}
\ee
Since $w$ is even in $v$ (see (\ref{eq4.4})),  one has 
\[
\w{ \frac{v\cdot\hat x}{R} \chi'(\frac{|x|}{R}) w, \chi_R w} = 0
\]
 and the right hand side of the  equality (\ref{eq4.6}) satisfies
\bea
\lefteqn{|\w{ \frac{v\cdot\hat x}{R} \chi'(\frac{|x|}{R}) u, u_R} |}  \\
&   =  & |2  \Re  \w{ \frac{v\cdot\hat x}{R} \chi'(\frac{|x|}{R}) w, \chi_R r} +  \w{ \frac{v\cdot\hat x}{R} \chi'(\frac{|x|}{R}) r, \chi_R r} |\nonumber \\
& \le   & C R^{-(1-s)}(\|w\|_{\vL^{2,-s}} + \|r\|_{L^2}) \|\w{v}r\|_{L^2} \nonumber
\eea
for some $\f 1 2 < s <1$.  Taking the limit $R\to +\infty$ in (\ref{eq4.6}), one obtains that $(\partial_v + \frac v 2)u(x,v) \in L^2$ and
\be
\int\int_{\bR^6} |(\partial_v + \frac v 2)u(x,v)|^2  \; dx dv = 0.
\ee
This shows that $(\partial_v + \frac v 2)u(x,v) =0$, a.e. in $x,v$. Since $u \in \vL^{2, -s}$ for any $s>\f 1 2$ and $Pu=0$, one sees that $u$ is of the form $u(x, v) = C(x) e^{-\frac{v^2}{4}}$ for some $C \in L^{2,-s}(\bR^3_x)$ verifying  the equation 
\be \label{eq4.71}
v\cdot\na_x C(x) + \f 1 2 v \cdot\na V(x) C(x)=0
\ee
 a.e. in $x$ for all $v \in \bR^3$.  This proves that $C(x) = c_0 e^{-\f{V(x)}{2}}$ a.e. for some constant $c_0$ and
\[
u(x, v) = c_0 \gm.
\]
 Since $u \in \vL^{2, -s}$ for any $s>\f 1 2$ and $\gm\not\in \vL^{2,-s}$ if $\f 1 2 <s < \f 3 2$ when  $V(x)$ is bounded, one concludes that $c_0 =0$, therefore $u=0$. This proves that 
$\ker_{\vL^{2,-s}} (1 + G_0W) =\{0\}$.
\ef

A solution, $u$,  of the stationary equation $Pu=0$ is called a resonant state if $u \in \vL^{2,-s}$ for any $s>\f 1 2$, but $u \not\in L^2$ and zero is then called a resonance of $P$.  Lemma \ref{lem4.1} shows that zero is neither an eigenvalue nor a resonance ({\it i.e.}, a regular point) of the KFP operator $P$ if the potential is of short-range. 
This is in sharp contrast to Schr\"odinger operators for which zero resonance may exist even for smooth and compactly supported potentials. Lemma \ref{lem4.1} makes easier the threshold spectral analysis for the KFP operator.

\begin{theorem} \label{th4.2}  Assume $n=3$ and $\rho>1$.
Then  zero is not an accumulation point of the eigenvalues of $P$ and one has for any $s, s'>\f 1 2$ with $s+s'>2$, $\exists \ep >0$ such that
\be \label{eq4.8}
R(z) = A_0 + O(|z|^\ep), \quad   z\to 0, z\not\in\bR_+,
\ee
in $\vB(-1, s; \; 1, -s')$, where 
\be
 A_0 = (1 + G_0W)^{-1} G_0.
\ee
There exists  $\delta>0$ such that boundary values of the resolvent
\[
R(\lambda\pm i0) = \lim_{\ep \to 0_+} R(\lambda\pm i\ep), \quad \lambda \in ]0, \delta]
\]
exist in $\vB(-1, s; \; 1, -s)$ for any $s>\f 1 2$ and are continuous in $\lambda$.
\end{theorem}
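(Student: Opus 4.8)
The plan is to invert $P-z$ near the threshold by means of the factorisation
\[
(P_0-z)\bigl(1+R_0(z)W\bigr)=P-z ,
\]
valid on $D(P)=D(P_0)$, which gives $R(z)=(1+R_0(z)W)^{-1}R_0(z)$ on any weighted space on which the middle factor is boundedly invertible. So the whole question reduces to inverting $1+R_0(z)W$ for $z$ near $0$, $z\notin\bR_+$, and to the dependence of the inverse on $z$.

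I would fix $s$ with $1<s<(1+\rho)/2$, which is possible exactly because $\rho>1$; the same condition gives $\sigma:=1+\rho-s>(1+\rho)/2>1$, and this is the margin that will be used. From $|\na V(x)|\le C\w{x}^{-1-\rho}$, the operator $W=-\na V\cdot\na_v$ maps $\vH^{1,-s}$ continuously into $\vH^{0,\sigma}\hookrightarrow\vH^{-1,\sigma}$; by Proposition~\ref{prop3.7}, $R_0(z):\vH^{-1,\sigma}\to\vH^{1,-\sigma}$ with $R_0(z)=G_0+O(|z|^\ep)$ in that space (the condition $\sigma+\sigma>2$ being satisfied); and $\vH^{1,-\sigma}\hookrightarrow\vH^{1,-s}$ is compact since $\sigma>s$. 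Hence $R_0(z)W$ is a \emph{compact} operator on $\vH^{1,-s}$, norm-continuous in $z$ up to $z=0$, and
\[
R_0(z)W=G_0W+O(|z|^\ep)\quad\text{in }\vB(\vH^{1,-s}),\qquad z\to0,\ z\notin\bR_+ .
\]
Since $\vH^{1,-s}\subset\vL^{2,-s}$, Lemma~\ref{lem4.1} gives $\ker_{\vH^{1,-s}}(1+G_0W)=\{0\}$, and compactness of $G_0W$ then makes $1+G_0W$ boundedly invertible on $\vH^{1,-s}$ by the Fredholm alternative; a routine bootstrap using $(1+G_0W)^{-1}=1-G_0W(1+G_0W)^{-1}$ and the mapping properties of $G_0W$ extends this invertibility to the scale of weighted spaces occurring in the statement, so that $A_0=(1+G_0W)^{-1}G_0\in\vB(-1,s;1,-s')$ is well defined.

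Now $z\mapsto 1+R_0(z)W$ extends to a norm-continuous family of operators of the form $1+(\text{compact})$ on the slit disc $\{|z|<\delta\}\setminus[0,\delta[$, together with the one-sided boundary values $1+R_0(\lambda\pm i0)W$, $\lambda\in\,]0,\delta]$, supplied by Corollary~\ref{cor3.6}, and with the value $1+G_0W$ at $z=0$ (using $R_0(0+i0)=R_0(0-i0)=G_0$ from Proposition~\ref{prop3.7}). This family is invertible at $z=0$, so by continuity of inversion, after shrinking $\delta$, it is invertible with uniformly bounded inverse on this whole doubled neighbourhood of $0$, and the second resolvent identity gives
\[
(1+R_0(z)W)^{-1}=(1+G_0W)^{-1}+O(|z|^\ep).
\]
Substituting this and $R_0(z)=G_0+O(|z|^\ep)$ into $R(z)=(1+R_0(z)W)^{-1}R_0(z)$ yields (\ref{eq4.8}) with $A_0=(1+G_0W)^{-1}G_0$; applying the same construction along the two edges of the slit gives the existence and continuity of $R(\lambda\pm i0)$ on $\,]0,\delta]$ in $\vB(-1,s;1,-s)$. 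Finally, any sequence of eigenvalues of $P$ converging to $0$ would, since the spectrum of $P$ is discrete off $\bR_+$, consist of points $z_n\to0$ with $z_n\notin\bR_+$; for $n$ large the corresponding $L^2$-eigenfunctions $u_n\in\vL^{2,-s}$ would satisfy $(1+R_0(z_n)W)u_n=0$ with $u_n\neq0$, contradicting invertibility — so, together with Lemma~\ref{lem4.1} ($0$ is not an eigenvalue), $0$ is not an accumulation point of eigenvalues of $P$.

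The essential input is Lemma~\ref{lem4.1}: it is the absence of a threshold eigenvalue or resonance that makes $1+G_0W$ invertible, and the whole scheme collapses without it. Granting Lemma~\ref{lem4.1}, the only delicate points are bookkeeping — choosing the weights so that each application of $R_0(z)$ at the threshold falls within the range of Proposition~\ref{prop3.7} and Corollary~\ref{cor3.6} (which is precisely where $\rho>1$ enters, through $(1+\rho)/2<\rho$), establishing norm-continuity of $R_0(z)W$ up to $z=0$ on the slit domain including both edges of $\bR_+$, and the bootstrap promoting invertibility of $1+G_0W$ from $\vL^{2,-s}$ to the full scale of spaces appearing in the statement.
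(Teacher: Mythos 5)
Your proof is correct and follows essentially the same route as the paper: reduce to inverting $1+R_0(z)W$, use Proposition~\ref{prop3.7} to show $R_0(z)W=G_0W+O(|z|^\ep)$ with $G_0W$ compact on $\vH^{1,-s}$ for $1<s<(1+\rho)/2$, invoke Lemma~\ref{lem4.1} together with the Fredholm alternative to invert $1+G_0W$, and conclude by a Neumann-series/continuity argument that $(1+R_0(z)W)^{-1}=(1+G_0W)^{-1}+O(|z|^\ep)$, whence $R(z)=A_0+O(|z|^\ep)$ and the absence of eigenvalues near zero. The bootstrap you mention for extending to the full range of weights $s,s'$ in the statement is a detail the paper leaves implicit, so your write-up is if anything a touch more careful on that point.
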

\pf Remark that $G_0 \in \vB(-1, s; 1, -s')$ for any $s, s' > \f 1 2$ and $s+ s' >2$. 
Lemma \ref{lem4.1} implies that $\ker_{\vH^{1,-s}} (1 + G_0W) =\{0\}$ and $ G_0 W$ is compact on $\vH^{1,-s}$ 
for $1<s <(1+\rho)/2$. Therefore $1 + G_0W$ is invertible on $ \vH^{1, -s}$  with bounded inverse.
Since 
\[
1+ R_0(z) W =  1+ G_0W + O(|z|^\ep), \quad \mbox{ in } \vB(1,-s; 1,-s), \quad \ep>0, 
\]
 for $|z|$ small and $z \not\in \bR_+$,   it follows that $1+ R_0(z) W$ is invertible  on $ \vH^{1, -s}$ for $|z|$ small and $z \not\in \bR_+$ and that
\be
(1+ R_0(z) W)^{-1}= (1+ G_0W)^{-1} + O(|z|^\ep), \mbox{ as } |z| \to 0, z \not\in \bR_+.
\ee
In particular, $1+ R_0(z) W$ is injective in $\vL^{2,-s}$ for $z$ near $0$ and $z \not\in \bR_+$. Since $R_0(z) (P-z) = 1+ R_0(z) W$, this shows that $P$ has no eigenvalues for $|z|<\delta$ for some $\delta >0$ and that
\be \label{eq4.9}
R(z) = (1+ R_0(z) W)^{-1}R_0(z) =  A_0 + O(|z|^\ep), z\not\in \bR_+,
\ee
with $A_0 = (1 + G_0W)^{-1} G_0$. The existence of the boundary values $R(\lambda\pm i0)$ for $0<\lambda <\delta$ follows from the first equality in (\ref{eq4.9}).
 \ef

\begin{theorem} \label{th4.3}  Assume $n=3$ and $\rho>2$.
Then  for any $s> \f 3 2$, there exists $\ep >0$ such that
\be \label{eq4.10}
R(z) = A_0 +  z ^{\f 1 2} A_1 + O(|z|^{\f 1 2 +\ep}), 
\ee
in $\vB(-1, s; \; 1, -s)$ for $   |z|$ small and $ z\not\in\bR_+$, where $A_1$ is an operator of rank one given by
\be
A_1 = (1+ G_0W)^{-1}G_1 ( 1- WA_0).
\ee
\end{theorem}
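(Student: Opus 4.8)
The plan is to push the resolvent identity $R(z)=(1+R_0(z)W)^{-1}R_0(z)$ from the proof of Theorem~\ref{th4.2} one order further, feeding in the refined low-energy expansion of $R_0(z)$ supplied by Proposition~\ref{prop3.7}. First I would fix a weight $s_0$ with $\f 3 2<s_0<\f{1+\rho}2$; this interval is non-empty precisely because $\rho>2$, and this is the only place the stronger hypothesis is used. I will establish the expansion in $\vB(-1,s_0; 1,-s_0)$, and then deduce it in $\vB(-1,s; 1,-s)$ for an arbitrary $s>\f 3 2$ from the continuous inclusions $\vH^{-1,s}\hookrightarrow\vH^{-1,s_0}$ and $\vH^{1,-s_0}\hookrightarrow\vH^{1,-s}$ valid for $s\ge s_0$ (for a given $s$ one may always choose $s_0\le s$ inside the above interval). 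Along the way I record the two facts that make the bookkeeping go through: by Proposition~\ref{prop3.7}, $G_0\in\vB(-1,s_0;1,-s_0)$ and $G_0\in\vB(-1,\rho+1-s_0;1,-s_0)$, while $G_1$ has range $\bC\gm_0$ with $\gm_0=1\otimes\psi_0$, so $G_1\in\vB(-1,a;1,-b)$ for all $a,b>\f 3 2$; and by Theorem~\ref{th4.2}, $G_0W$ is compact and $1+G_0W$ is boundedly invertible on $\vH^{1,-s_0}$.

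Next I would compose $R_0(z)=G_0+z^{\f 1 2}G_1+O(|z|^{\f 1 2+\ep})$ (in $\vB(-1,s_0; 1,-s_0)$, $z\to0$, $z\not\in\bR_+$) with $W=-\na_xV\cdot\na_v$. Under (\ref{eq4.1}) with $\rho>2$, $W$ maps $\vH^{1,-s_0}$ continuously into $\vH^{-1,\rho+1-s_0}$, and since $\rho+1-s_0>\f 3 2$ and $G_1$ has range $\bC\gm_0\subset\vH^{1,-s_0}$, both $G_0W$ and $G_1W$ are bounded on $\vH^{1,-s_0}$. Hence
\[
1+R_0(z)W=(1+G_0W)\bigl(1+z^{\f 1 2}(1+G_0W)^{-1}G_1W+O(|z|^{\f 1 2+\ep})\bigr)
\]
on $\vH^{1,-s_0}$, with $(1+G_0W)^{-1}$ bounded. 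Inverting the Neumann factor and retaining the first two terms gives
\[
(1+R_0(z)W)^{-1}=(1+G_0W)^{-1}-z^{\f 1 2}(1+G_0W)^{-1}G_1W(1+G_0W)^{-1}+O(|z|^{\f 1 2+\ep})
\]
in $\vB(1,-s_0; 1,-s_0)$, the remainder being controlled by the uniform boundedness near $z=0$ of all factors.

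Multiplying this by $R_0(z)=G_0+z^{\f 1 2}G_1+O(|z|^{\f 1 2+\ep})\in\vB(-1,s_0; 1,-s_0)$ and collecting powers of $z^{\f 1 2}$: the order-one term is $A_0=(1+G_0W)^{-1}G_0$, and the order-$z^{\f 1 2}$ term is $(1+G_0W)^{-1}G_1-(1+G_0W)^{-1}G_1W(1+G_0W)^{-1}G_0=(1+G_0W)^{-1}G_1(1-WA_0)$, which is the asserted $A_1$. In carrying out this last product I would check that $A_0\in\vB(-1,s_0;1,-s_0)$, that $WA_0$ maps $\vH^{-1,s_0}$ into $\vH^{-1,\rho+1-s_0}$, so that $1-WA_0$ takes values in $\vH^{-1,\sigma}$ with $\sigma=\min(s_0,\rho+1-s_0)>\f 3 2$, on which $G_1$ acts with values in $\bC\gm_0\subset\vH^{1,-s_0}$ — once more using $\rho>2$. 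Finally, $A_1=(1+G_0W)^{-1}G_1(1-WA_0)$ has range contained in the line $\bC(1+G_0W)^{-1}\gm_0$ and is therefore of rank at most one; writing $G_1h=\f{i}{4\pi}\w{\gm_0,h}\gm_0$, the fact that it is genuinely of rank one amounts to $(1+A_0^{*}W)\gm_0\neq0$ (using $W^{*}=-W$), which I would verify by a direct computation.

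The main obstacle is this weighted-space bookkeeping rather than any new analytic input. The singular coefficient $G_1$ only acts on data decaying faster than $\w{x}^{-3/2}$ and produces a function (a multiple of $\gm_0$) with no decay at all, so one needs the decay gain $\w{x}^{-(\rho+1)}$ supplied by $W$ to be strong enough that the inequalities $s_0>\f 3 2$ and $\rho+1-s_0>\f 3 2$ (together with invertibility of $1+G_0W$ on $\vH^{1,-s_0}$) can hold simultaneously; this is exactly what forces $\rho>2$, whereas in Theorem~\ref{th4.2} only $G_0$ — which tolerates weights down to $\f 1 2$ — was involved and $\rho>1$ sufficed. Everything else, namely propagating the $O(|z|^{\f 1 2+\ep})$ remainders through a finite Neumann expansion and invoking the uniform invertibility of $1+R_0(z)W$ near $z=0$ from Theorem~\ref{th4.2}, is routine.
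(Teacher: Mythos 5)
Your proposal is correct and follows essentially the same route as the paper: expand $R_0(z)=G_0+z^{1/2}G_1+O(|z|^{1/2+\ep})$, invert $1+R_0(z)W$ by a Neumann factorization around the invertible operator $1+G_0W$ (the paper's $B_0$), and multiply back by $R_0(z)$ to collect the coefficient $(1+G_0W)^{-1}G_1(1-WA_0)$, with the constraint $\tfrac32<s<\tfrac{1+\rho}2$ playing exactly the role you identify for $\rho>2$. The only cosmetic differences are that the paper passes from $\vB(0,s;0,-s)$ to $\vB(-1,s;1,-s)$ by hypoellipticity rather than by your embedding argument, and it contents itself with ``rank one because $G_1$ is rank one'' where you add the (harmless) extra check that the rank is not zero.
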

\pf
For $s>\f  3 2$, one has
\[
R_0(z) = G_0 +  z ^{\f 1 2} G_1 + O(|z|^{\f 1 2 +\ep})
\]
in $\vB(-1, s; 1, -s)$. If $\rho>2$, one has $W \in \vB(0, -r; 1, \rho+1-r)$. Therefore for  $\f 3 2 < s < (\rho+1)/2$, it 
\[
 R_0(z) W - (G_0 +  z ^{\f 1 2} G_1 )W =  O(|z|^{\f 1 2 +\ep})
\]
in $\vB(0, -s; 0, -s)$. Since $B_0 \triangleq (1+G_0W)^{-1} \in \vB(0, -s; 0, -s)$, it follows that
\[
 (1+ R_0(z)W)^{-1} = B_0 - z ^{\f 1 2}B_0 G_1 W B_0  +  O(|z|^{\f 1 2 +\ep}).
\]
From the resolvent equation $R(z) = (1+ R_0(z) W)^{-1}R_0(z)$, we obtain that
\[
R(z) = B_0G_0 +  z ^{\f 1 2} B_0G_1 (1- W B_0 G_0) +  O(|z|^{\f 1 2 +\ep})
\]
in $\vB(0, s; 0, -s)$. An argument of hypoellticity shows that the same asymptics holds in $\vB(-1, s; 1, -s)$. Remark that $A_1= B_0G_1 (1- W B_0 G_0)$ is a rank one operator,  because $G_1$ is  of  rank  one. \ef

\sect{Large-time behaviors of solutions }

The large-time behaviors of solutions to the KFP equation with a potential will be deduced from resolvent asumptotics and a representation formula of the semigroup $S(t) =e^{-tP}$ in terms of the resolvent. To this purpose, we need the following high energy pseudospectral estimate.

\begin{theorem}\label{th5.1} Let $n \ge 1$ and assume (\ref{eq4.1}) with $\rho \ge -1$. Then there exists $C>0$ such that   $\sigma (P) \cap \{z; |\Im z| > C,  \Re z \le \f 1 C |\Im z|^{\f 1 3}  \} = \emptyset$ and
\be \label{eq5.1}
\|R(z)\| \le \f{C}{|z|^{\f 1 3}}, \quad
\ee
and
\be \label{eq5.2}
\|(1-\Delta_v + v^2)^{\f 1 2}R(z)\| \le \f{C}{|z|^{\f 1 6}}, \quad
\ee
for $|\Im z| > C $ and $ \Re z \le \f 1 C |\Im z|^{\f 1 3}$.
\end{theorem}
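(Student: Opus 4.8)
The plan is to treat the full Kramers--Fokker--Planck operator $P = P_0 + W$, with $W = -\na V(x)\cdot\na_v$, as a perturbation of the free operator $P_0$ and to transfer the pseudo-spectral bounds of Proposition \ref{prop3.8} to $R(z) = (P-z)^{-1}$ by means of the second resolvent identity. The starting observation is that, since (\ref{eq4.1}) holds with $\rho \ge -1$, one has $|\na V(x)| \le C$ on all of $\bR^n$, so that $W$ is bounded relatively to the $v$-gradient: $\|W u\| \le C\|\na_v u\|$. Combining this with the elementary identity $\|\na_v u\|^2 \le \w{(1-\Delta_v + v^2)u, u} = \|(1-\Delta_v + v^2)^{\f 1 2}u\|^2$, estimate (\ref{eq3.40b}) of Proposition \ref{prop3.8} (applied with, say, $\delta = 1$, producing a constant $M$) gives
\[
\|W R_0(z)\| \;\le\; C\,\|(1-\Delta_v + v^2)^{\f 1 2}R_0(z)\| \;\le\; \f{C M}{|z|^{\f 1 6}}
\]
throughout the region $\{|\Im z| > M,\ \Re z \le \f 1 M |\Im z|^{\f 1 3}\}$. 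Here $W R_0(z)$ is a genuine bounded operator on $L^2$: this follows from the hypoelliptic estimate of Proposition \ref{prop2.1}, which gives $R_0(z) : L^2 \to D(P_0) \subset \{u : \na_v u \in L^2\}$.

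Next I would fix the constant $C$ of the statement large enough that (i) $C \ge M$, so that $\{|\Im z| > C,\ \Re z \le \f 1 C |\Im z|^{\f 1 3}\}$ is contained in the domain of validity of Proposition \ref{prop3.8}, and (ii) $C M\,|z|^{-\f 1 6} \le \f 1 2$ on that region -- possible since $|z| \ge |\Im z| > C$ there. Then $1 + W R_0(z)$ is boundedly invertible on $L^2$ with $\|(1 + W R_0(z))^{-1}\| \le 2$. Since $\Im z \ne 0$ forces $z \notin \sigma(P_0) = [0,+\infty[$, the factorisation $P - z = (1 + W R_0(z))(P_0 - z)$ on $D(P) = D(P_0)$ shows that $z$ lies in the resolvent set of $P$, whence $\sigma(P) \cap \{|\Im z| > C,\ \Re z \le \f 1 C |\Im z|^{\f 1 3}\} = \emptyset$ and
\[
R(z) = R_0(z)\,(1 + W R_0(z))^{-1}.
\]
The two estimates (\ref{eq5.1}) and (\ref{eq5.2}) then follow at once from (\ref{eq3.40}), (\ref{eq3.40b}) and the bound $\|(1 + W R_0(z))^{-1}\| \le 2$, after enlarging $C$ once more so that $C \ge 2M$.

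All the analytic substance is already contained in Proposition \ref{prop3.8} (which itself rests on the semiclassical resolvent estimate of \cite{dsz}); the present step is pure perturbation theory. The only delicate point is quantitative: $W$ costs exactly one $v$-derivative, while the free resolvent gains a factor $|z|^{\f 1 6}$ in the $v$-variables, so the smallness of $W R_0(z)$ at high energy has a thin but strictly positive margin -- and it is precisely here that the boundedness of $\na V$, i.e. the hypothesis $\rho \ge -1$, is essential; for $\rho < -1$ the scheme would not close. Everything else (the domain identities, the Neumann inversion, the bookkeeping of constants) is routine.
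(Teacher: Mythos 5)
Your proof is correct and follows essentially the same route as the paper: both apply the free pseudospectral bounds of Proposition \ref{prop3.8}, observe that $\|W R_0(z)\| = O(|z|^{-\frac 1 6})$ in the relevant region (using $|\nabla V| \le C$, which is exactly where $\rho \ge -1$ enters), invert $1 + W R_0(z)$ by Neumann series, and read off (\ref{eq5.1}) and (\ref{eq5.2}) from the resulting resolvent identity. The only (cosmetic) difference is the ordering of the factors: you write $P - z = (1 + W R_0(z))(P_0 - z)$, giving $R(z) = R_0(z)(1 + W R_0(z))^{-1}$, whereas the paper writes $R(z) = (1 + R_0(z) W)^{-1} R_0(z)$; your choice has the small advantage of making the weighted bound (\ref{eq5.2}) follow by a one-line product estimate, without needing the companion bound $\|R_0(z) W\| = O(|z|^{-\frac 1 6})$ or an extra iteration of the identity.
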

\pf Let $ W = -\nabla_xV(x)\cdot \nabla_v$. (\ref{eq3.40b}) shows that $\|WR_0(z)\| + \|R_0(z) W \| =O( |z|^{-\f 1 6})$ for $z$ in the region $\{z; |\Im z| > M,  \Re z \le \f 1 M |\Im z|^{\f 1 3}  \}$. Therefore $(1+ R_0(z) W)^{-1}$ exists and is uniformly bounded if $\{z; |\Im z| > M,  \Re z \le \f 1 M |\Im z|^{\f 1 3}  \}$ with $M$ sufficiently large.
Theorem \ref{th5.1} follows from Proposition \ref{prop3.8} and the resolvent equation $R(z) = (1+ R_0(z) W)^{-1} R_0(z)$
for $|\Im z| > C $ and $ \Re z \le \f 1 C |\Im z|^{\f 13}$ with $C\ge M$ sufficiently large.
\ef

\begin{lemma}\label{lem5.2} Let $n \ge 1$ and assume (\ref{eq4.1}) with $\rho \ge -1$. Then 
\be \label{eq5.3}
S(t) f =  \frac{1}{2\pi i} \int_{\gamma } e^{-t z}R(z)f dz
\ee
for $f\in L^2$ and $t>0$, where  the contour $\gamma$ is chosen such that 
\[\gamma = \gamma_- \cup \gamma_0 \cup \gamma_+\]
with $\gamma_{\pm} =\{ z; z = \pm i C + \lambda \pm i C \lambda^3, \lambda \ge 0\}$ and $\gamma_0$ is a curve in the left-half complexe plane joining $-i C$ and $iC$ for some $C>0$ sufficiently large, $\gamma$ being oriented from $-i\infty$ to $+i\infty$. 
\end{lemma}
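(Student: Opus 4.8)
The plan is to represent the contraction semigroup $S(t)=e^{-tP}$ via the inverse Laplace transform (Dunford–Taylor calculus), then deform the vertical contour to the curve $\gamma$, justifying the deformation by the resolvent bounds that are now available. Since $-P$ generates a strongly continuous semigroup of contractions, for $f$ in a suitable dense subspace one has the standard representation
\be
S(t)f = \frac{1}{2\pi i}\int_{a-i\infty}^{a+i\infty} e^{-tz} R(z)f\, dz,
\ee
for any $a<0$, as an improper Riemann integral; more precisely, for $f\in D(P)$ (so that $\|R(z)f\|=O(|z|^{-1})$ along the vertical line and the integral converges absolutely after one integration by parts, or in the sense of an oscillatory limit), this follows from inversion of the Laplace transform $R(z)=-\int_0^\infty e^{tz}S(t)\,dt$, valid for $\Re z<0$ since $S$ is a contraction semigroup. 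The goal is then to move the line $\{\Re z = a\}$ to $\gamma$ without crossing any spectrum and without picking up contributions at infinity.

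First I would fix $C\ge M$ large enough that Theorem \ref{th5.1} applies: then $\sigma(P)\cap\{|\Im z|>C,\ \Re z\le \frac1C|\Im z|^{1/3}\}=\emptyset$, and on the two arcs $\gamma_\pm=\{z=\pm iC+\lambda\pm iC\lambda^3,\ \lambda\ge 0\}$ we have the bound $\|R(z)\|\le C|z|^{-1/3}$. The curve $\gamma_0$ in the open left half-plane joining $-iC$ to $iC$ can be chosen inside the resolvent set as well, since $\sigma(P)\subset\{\Re z\ge 0\}$ by accretivity (indeed $P$ is maximally accretive), so the entire contour $\gamma$ lies in $\rho(P)$. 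Next I would carry out the contour deformation: for $R>C$ consider the closed region bounded by the segment $[a-iR,\,a+iR]$ of the original vertical line, two nearly horizontal connecting segments at heights $\pm\Im(\text{endpoint})$, and the portion of $\gamma$ between the corresponding endpoints. On this region $e^{-tz}R(z)f$ is holomorphic, so by Cauchy's theorem the integral over the vertical segment equals the integral over the $\gamma$-portion plus the two connecting segments. One then lets $R\to\infty$: on the connecting segments $\Re z$ ranges over a bounded interval while $|\Im z|\to\infty$, so $|e^{-tz}|=e^{-t\Re z}$ stays bounded while $\|R(z)\|\le C|z|^{-1/3}\to 0$ along them (for this one should run the connecting segments just to the left of $\gamma_\pm$ inside the pseudospectral-free region, or equivalently along lines $\Re z = \text{const}$ meeting $\gamma_\pm$), so these contributions vanish in the limit. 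This yields \eqref{eq5.3} for $f\in D(P)$.

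To upgrade to all $f\in L^2$ and to every $t>0$, I would check absolute convergence of $\int_\gamma e^{-tz}R(z)f\,dz$ for each fixed $t>0$ directly from the bound $\|R(z)\|\le C|z|^{-1/3}$: on $\gamma_\pm$, parametrized by $\lambda\ge 0$, one has $\Re z = \lambda$ and $|z|\gtrsim 1+\lambda$, while $|dz|\lesssim (1+\lambda^2)\,d\lambda$; hence $\int_{\gamma_\pm}|e^{-tz}|\,\|R(z)\|\,|dz|\lesssim \int_0^\infty e^{-t\lambda}(1+\lambda^2)(1+\lambda)^{-1/3}\,d\lambda<\infty$. Thus the right-hand side of \eqref{eq5.3} defines a bounded operator on $L^2$ for each $t>0$, depending continuously on $t$, and since it agrees with $S(t)$ on the dense set $D(P)$ it agrees with $S(t)$ on all of $L^2$.

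**The main obstacle** I anticipate is the rigorous justification of the initial vertical-line representation and of discarding the horizontal connecting segments at infinity: the resolvent of a non-self-adjoint generator need not decay fast enough along vertical lines for naive absolute convergence, so one must either work with $f\in D(P^k)$ for suitable $k$ and an integration-by-parts (replacing $R(z)$ by $z^{-k}R(z)P^k$ up to polynomial corrections, gaining decay) or invoke the Hille–Phillips inversion theorem for $C_0$-semigroups in the Cesàro/Abel sense; then density and the clean bound $\|R(z)\|\le C|z|^{-1/3}$ on $\gamma$ close the argument. The deformation itself is routine once one knows $\gamma\subset\rho(P)$, which is exactly what Theorem \ref{th5.1} together with accretivity of $P$ provides.
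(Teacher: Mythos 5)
Your argument is correct, but it takes a genuinely different route from the paper. The paper does not start from an inverse Laplace transform on a vertical line at all: it simply defines $\tilde S(t)=\frac{1}{2\pi i}\int_\gamma e^{-tz}R(z)\,dz$, observes that the integral is norm convergent thanks to the bound $\|R(z)\|\le C|z|^{-1/3}$ on $\gamma$ from Theorem \ref{th5.1}, checks that $\tilde S'(t)f=-P\tilde S(t)f$ for $f\in D(P_0)$ and $\tilde S(t)\to I$ strongly as $t\to 0_+$, and concludes $\tilde S(t)=S(t)$ by uniqueness for the Cauchy problem. That route entirely sidesteps the two delicate points you single out (convergence of the vertical-line inversion integral and the behaviour of the connecting segments at infinity), at the price of having to verify the differential equation and the initial condition for $\tilde S(t)$; your route is the more classical contour-deformation argument and makes the identity transparent once the deformation region is known to be spectrum-free, which indeed follows from accretivity of $P$ together with Theorem \ref{th5.1}. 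One detail in your deformation step is stated incorrectly, though the estimate still closes: on a horizontal connecting segment at height $\pm R'$, the real part of $z$ is \emph{not} confined to a bounded interval, since the endpoint of $\gamma_\pm$ at that height has $\Re z\sim (R'/C)^{1/3}\to\infty$; the segment's length therefore grows like $(R')^{1/3}$, and boundedness of $|e^{-tz}|$ alone gives only an $O(1)$ bound. What saves the argument is that $\int_a^\infty e^{-t\sigma}\,d\sigma<\infty$ for each fixed $t>0$, so the contribution of the segment is $O\bigl((R')^{-1/3}\bigr)\to 0$ once one combines the exponential decay in $\Re z$ with the resolvent bound $\|R(z)\|\le C|z|^{-1/3}$ valid on the whole segment (which lies in the pseudospectral-free region for $C$ large). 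With that correction, and with the standard treatment of the vertical-line inversion for $f\in D(P)$ followed by density, your proof is complete. (Minor typo: for the contraction semigroup $S(t)=e^{-tP}$ one has $R(z)=+\int_0^\infty e^{tz}S(t)\,dt$ for $\Re z<0$, without the minus sign.)
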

\pf The spectrum of $P$ is void in the left side of $\gamma$. By Theorem \ref{th5.1}, for $C>0$ sufficiently large, $\gamma$ is contained in the resolvent set of $P$ and
\[
\|R(z)\| \le  \f C {|z|^{\f 1 3}}, \quad z \in \gamma.
\]
 Therefore, the integral $\tilde S(t) = \frac{1}{2\pi i} \int_{\gamma } e^{-t z}R(z) dz$ is norm convergent.
 In addition, one can check as in the standard case (see, for example, \cite{Kat80}) that  $\tilde S'(t)f = -P\tilde S(t)f$
 for $f \in D(P_0)$ and that $\lim_{t \to 0+} \tilde S(t)=I$ strongly. The uniqueness of solution to the evolution equation  $u'(t) + Pu(t)=0$ for $t>0$ and $ u(0) =u_0$ implies that $\tilde S(t)= S(t)$, $t>0$.
\ef

\begin{cor}\label{cor5.3} Assume that $n=3$ and $\rho>1$. One has
\begin{equation} \label{eq5.5}
\w{S(t)f,g} =\frac{1}{2\pi i} \int_{\Gamma} e^{-t z}\w{R(z)f,g} dz, \quad t>0,
\end{equation}
for any $f,g \in \vL^{2, s}$, $s>1$. Here
\[
\Gamma = \Gamma_- \cup \Gamma_0 \cup \Gamma_+
\]
with $\Gamma_{\pm} =\{ z; z = \delta + \lambda \pm i  \delta^{-1} \lambda^3, \lambda \ge 0\}$ for $\delta>0$ small enough and $\Gamma_0 =\{ z = \lambda \pm i0; \lambda \in [0, \delta]\}$. $\Gamma$ is oriented from $-i\infty$ to $+i\infty$.
\end{cor}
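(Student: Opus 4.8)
The plan is to deform the contour $\gamma$ from Lemma \ref{lem5.2} into the contour $\Gamma$, which hugs the spectrum more closely near the threshold zero. First I would recall that by Theorem \ref{th4.2} the operator $P$ has no spectrum in a disc $\{|z| < \delta\}$ except possibly on the ray $\bR_+$, and by Lemma \ref{lem5.2} the semigroup is already represented as a contour integral over $\gamma = \gamma_-\cup\gamma_0\cup\gamma_+$, where $\gamma_0$ is a curve in the left half-plane joining $-iC$ to $iC$ and $\gamma_\pm$ are the cubic curves $z = \pm iC + \lambda \pm iC\lambda^3$. The region between $\gamma$ and $\Gamma$ that we want to sweep through lies in the left half-plane together with a neighbourhood of the segment $]0,\delta]$ of the positive axis; in this region $P$ has empty spectrum (using short-range-ness and Theorem \ref{th4.2}), so $R(z)$ is holomorphic there, once we interpret it as the boundary value $R(\lambda\pm i0)$ on $\Gamma_0$.

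The key steps, in order, would be: (i) Fix $f,g\in\vL^{2,s}$ with $s>1$ and regard $z\mapsto \w{R(z)f,g}$ as a scalar holomorphic function on $\bC\setminus\bR_+$; by Theorem \ref{th4.2} and the standard limiting absorption principle for $R_0$ from Corollary \ref{cor3.6}, this function extends continuously from above and from below to the punctured segment $]0,\delta]$, with boundary values $\w{R(\lambda\pm i0)f,g}$, and continuously (indeed holomorphically) across $]-\infty,0]$ and past $0$ through the left half-plane since $0$ is a regular point. (ii) Apply Cauchy's theorem on the closed region bounded by the relevant pieces of $\gamma$ and $\Gamma$, truncated at $|\Im z| = R$; the outer boundary pieces of $\gamma$ and $\Gamma$ both lie along comparable cubic curves $\Re z \lesssim (\Im z)^{1/3}$ where $\|R(z)\|\le C|z|^{-1/3}$ by Theorem \ref{th5.1}, so the horizontal connecting segments at $|\Im z| = R$ contribute an error that vanishes: on such a segment $|e^{-tz}| = e^{-t\Re z} \le e^{Ct R^{1/3}}$ is not decaying, but the segment can be taken to have length $O(R^{1/3})$ and this is exactly where one must be careful — actually the standard trick is that the two contours $\gamma_\pm$ and $\Gamma_\pm$ only differ near their base points $(\pm iC$ versus $\delta\pm i0)$, i.e.\ in a \emph{bounded} region, so no truncation at infinity is needed at all. (iii) Hence the difference $\frac{1}{2\pi i}\int_\gamma - \frac{1}{2\pi i}\int_\Gamma$ reduces to a contour integral over a closed bounded curve in the domain of holomorphy of $\w{R(z)f,g}$, which vanishes by Cauchy's theorem; the only subtlety is the pinch at the positive axis near $0$, handled by (i).

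Concretely I would argue: the cubic tails $\gamma_\pm$ (with parameter $C$) and $\Gamma_\pm$ (with parameter $\delta$) can be chosen to coincide for $\Re z$ large, say for $\lambda\ge\lambda_0$, by matching the cubic coefficients or simply by connecting them with a short arc in the region $\{\Re z \le c(\Im z)^{1/3}\}\cap\{|z|\ge$ const$\}$ where Theorem \ref{th5.1} gives holomorphy and the bound $\|R(z)\|\le C|z|^{-1/3}$; on such a connecting arc $|\Im z|$ is bounded below by a constant while $|e^{-tz}|$ is at worst $O(1)$ pointwise (for fixed $t$), and letting the arc recede to infinity its length times the resolvent bound times $\sup|e^{-tz}|$ need not go to zero in general — so instead I keep the tails genuinely equal beyond a fixed point and only deform the bounded middle portions $\gamma_0$ and $\Gamma_0$ (together with the bounded initial segments of $\gamma_\pm,\Gamma_\pm$) against each other. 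That region is compact, contained in $\{|z|\le R_0\}$, and avoids $\sigma(P)$ except for the segment $]0,\delta]$ across which $\w{R(z)f,g}$ has continuous boundary values by the limiting absorption principle of Theorem \ref{th4.2}; Cauchy's theorem (in the form for functions holomorphic in a region and continuous up to a slit boundary) then gives $\int_\gamma\w{R(z)f,g}e^{-tz}dz = \int_\Gamma\w{R(z)f,g}e^{-tz}dz$, which is \eqref{eq5.5}.

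The main obstacle is the behaviour at the base point $z=0$ where the two halves $\Gamma_+$ (with $\Gamma_0$ from above) and $\Gamma_-$ (with $\Gamma_0$ from below) pinch the positive real axis: one must know that $\w{R(\lambda+i0)f,g}$ and $\w{R(\lambda-i0)f,g}$ exist, are continuous on $[0,\delta]$, and \emph{agree at $\lambda=0$} so that the slit contour $\Gamma_0$ is well defined and the integral converges; this is precisely what Theorem \ref{th4.2} provides (continuity of $R(\lambda\pm i0)$ on $]0,\delta]$ together with $R(0+i0)=R(0-i0)=A_0$, inherited from $R_0(0+i0)=R_0(0-i0)$ in Corollary \ref{cor3.6}). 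A secondary, purely technical point is justifying the contour shift for the unbounded tails: here one uses that $\gamma_\pm$ and $\Gamma_\pm$ can be taken identical outside a compact set, reducing everything to Cauchy's theorem on a compact region, so no decay estimate on $e^{-tz}$ along receding horizontal segments is actually required.
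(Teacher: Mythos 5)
Your overall strategy --- deform the contour $\gamma$ of Lemma \ref{lem5.2} onto $\Gamma$, using the absence of spectrum in the intermediate region and the limiting absorption principle of Theorem \ref{th4.2} to give meaning to, and ensure convergence of, the integral over the slit $\Gamma_0$ --- is exactly the paper's argument, and your treatment of the pinch at $z=0$ is the right one. However, your handling of the tails at infinity contains a genuine error. You assert that on the horizontal connecting segment at height $|\Im z|=R$ the factor $|e^{-tz}|=e^{-t\Re z}$ ``is not decaying'', and you therefore propose to replace $\Gamma_\pm$ by curves coinciding with $\gamma_\pm$ outside a compact set. The first claim is false: the endpoint of $\gamma_+$ at height $R$ has $\Re z=((R-C)/C)^{1/3}$ and the endpoint of $\Gamma_+$ at height $R$ has $\Re z=\delta+(\delta R)^{1/3}$, so every point of the connecting segment satisfies $\Re z\ge c\,R^{1/3}$ with $c>0$; hence $|e^{-tz}|\le e^{-tcR^{1/3}}$, and together with $\|R(z)\|\le C|z|^{-1/3}$ from Theorem \ref{th5.1} (valid on the whole segment for $C$ large and $\delta$ small relative to the constant there) and a segment length $O(R^{1/3})$, the truncation error is $O(e^{-tcR^{1/3}})\to 0$. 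This standard truncated Cauchy argument is what the paper's phrase ``by analytic deformation, we conclude from (\ref{eq5.1})'' refers to. Your substitute does not prove the stated corollary: $\Gamma_\pm$ is prescribed as $\{z=\delta+\lambda\pm i\delta^{-1}\lambda^3\}$ and does not coincide with $\gamma_\pm$ outside any compact set, so after deforming only the ``bounded middle portions'' you would still owe the deformation of the genuine tails --- precisely the estimate you tried to avoid.

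A second, smaller gap: to know $R(z)$ is holomorphic in the whole region between $\gamma$ and $\Gamma$ you need $\sigma_p(P)\cap\{z:\Re z\le\delta_0\}=\emptyset$ for some $\delta_0>0$, not merely the absence of eigenvalues in a small disc around $0$; the region contains points with small positive real part and $|\Im z|$ of order one. The paper obtains this by combining Theorem \ref{th4.2}, Theorem \ref{th5.1}, the discreteness of the eigenvalues off $\bR_+$ (so they can only accumulate on $\bR_+$), and the absence of eigenvalues with $\Re z=0$ (via the argument of Lemma \ref{lem4.1}); your step (i) should include this compactness argument rather than appealing only to Theorem \ref{th4.2} and ``short-range-ness''.
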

\pf 
$P$ has no eigenvalues with real part equal to zero (see the arguments used in the proof of Lemma \ref{lem4.1}) and their only possible accumulation points are in $\bR_+$. If $n=3$ and $\rho>1$,  Theorem \ref{th4.2} and Theorem \ref{th5.1}
imply that there exists some $\delta_0 >0$ such that
\be
\sigma_p(P) \cap \{z; \Re z \le \delta_0\} = \emptyset.
\ee
Consequently if $0<\delta <\delta_0$ is small enough, there is no spectrum of $P$ in the interior of the region between $ \gamma$ and $\Gamma$ and the resolvent is holomorphic there. In addition, the limiting absorption principles at low energies ensure that the integral
\[
\int_{\Gamma} e^{-t z}\w{R(z)f,g} dz, \quad t>0,
\]
is convergent for any $f,g \in \vL^{2, s}$ with $s>1$. By analytic deformation, we conclude from (\ref{eq5.1}) that
\[
\w{S(t)f,g} = \frac{1}{2\pi i} \int_{\gamma} e^{-t z}\w{R(z)f,g} dz = \frac{1}{2\pi i} \int_{\Gamma} e^{-t z}\w{R(z)f,g} dz, \quad t>0,
\]
for any $f,g \in \vL^{2, s}$ with $s>1$.
\ef

The formula (\ref{eq5.5}) is useful for studying the time-decay of solutions to the KFP equation with a short-range potential (satisfying (\ref{ass}) with $\rho>1$).

\begin{theorem}\label{th5.4} Assume  $n=3$.
\\

 (a). If $\rho>1$, one has for any $s> \f 3 2$
\be \label{eq5.8}
\|S(t)\|_{\vB(0,s; 0, -s)} \le C_s  t^{- \f 3 2}, \quad t  >0.
\ee

(b). If  $\rho >2$, then  for any $s> \f 3 2$, there exists some $\ep >0$ such that
\be \label{eq5.7}
S(t) =  t^{- \f 3 2} B_1 + O(t^{- \f 3 2 -\ep})
\ee
in $\vB(0,s; 0, -s)$  as $t \to + \infty$, where
\be
B_1 = \frac{1}{2  i\sqrt \pi} A_1
\ee
is an operator of rank one. \\
\end{theorem}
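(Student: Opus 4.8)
The plan is to combine the contour representation of Corollary \ref{cor5.3} with the low-energy behaviour of the resolvent. For $t$ in any fixed interval $]0,T_0]$ both assertions are trivial, since $S(t)$ is an $L^2$-contraction and $\vL^{2,s}\hookrightarrow L^2\hookrightarrow\vL^{2,-s}$ for $s\ge 0$, so $\|S(t)\|_{\vB(0,s;0,-s)}\le 1$; it therefore suffices to treat $t\to+\infty$. Starting from $\w{S(t)f,g}=\frac{1}{2\pi i}\int_\Gamma e^{-tz}\w{R(z)f,g}\,dz$ for $f,g\in\vL^{2,s}$, $s>1$, I would estimate the two branches $\Gamma_\pm$ going to infinity by Theorem \ref{th5.1}: there $\Re z=\delta+\lambda\ge\delta$ and $\|R(z)\|\le C$ (bounded on the compact part, $O(|z|^{-1/3})$ for $\lambda$ large once $\delta$ is small enough), while $|dz|\le C(1+\lambda^2)\,d\lambda$, so $\int_{\Gamma_\pm}$ contributes $O(e^{-t\delta})$. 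On $\Gamma_0$ the small arc around the origin is harmless because $R(z)=A_0+O(|z|^\ep)$ is bounded near $0$ (Theorem \ref{th4.2}), and after accounting for the orientation one is left with
\[
\w{S(t)f,g}=\frac{1}{2\pi i}\int_0^\delta e^{-t\lambda}\w{\big(R(\lambda+i0)-R(\lambda-i0)\big)f,g}\,d\lambda+O(e^{-t\delta}).
\]
Thus everything reduces to the size (part (a)) and the leading term (part (b)) of the spectral jump $R(\lambda+i0)-R(\lambda-i0)$ as $\lambda\to 0_+$.

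For the jump I would use the symmetric identity
\[
R(\lambda+i0)-R(\lambda-i0)=\big(1+R_0(\lambda-i0)W\big)^{-1}\,\big(R_0(\lambda+i0)-R_0(\lambda-i0)\big)\,\big(1+WR_0(\lambda+i0)\big)^{-1},
\]
obtained from $R(z)=(1+R_0(z)W)^{-1}R_0(z)=R_0(z)(1+WR_0(z))^{-1}$ by subtracting the two boundary values and simplifying (using $R_0(\lambda-i0)(1+WR_0(\lambda-i0))^{-1}=R(\lambda-i0)$ and $1-R(\lambda-i0)W=(1+R_0(\lambda-i0)W)^{-1}$). By the proof of Theorem \ref{th4.2} and Lemma \ref{lem4.1}, the two outer factors are invertible with inverses bounded uniformly for small $\lambda$ on $\vH^{1,-\sigma}$, resp.\ $\vL^{2,\sigma}$, for every $\sigma\in\,]1,(1+\rho)/2[$. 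For the free jump, the decomposition $(\ref{RR0})$ with $l=0$ shows that only $b_0^w(-\Delta_x-z)^{-1}$ has a cut across $]0,a[$, the remainder $r_0(z)$ being holomorphic there, so $R_0(\lambda+i0)-R_0(\lambda-i0)=2\pi i\,b_0^w\,(E_0'(\lambda)\otimes I_v)$, where $E_0'(\lambda)$ is the spectral density of $-\Delta_x$ on $\bR^3$ at energy $\lambda$. Since $E_0'(\lambda)$ factors through the trace of the $x$-Fourier transform on the sphere $\{|\xi|^2=\lambda\}$, one has $\|E_0'(\lambda)\|_{\vB(0,\sigma;0,-\sigma)}\le C\sqrt\lambda$ uniformly for $\lambda\in\,]0,\delta]$ and every $\sigma>\frac12$; as $b_0^w$ preserves the spatial weight and gains one derivative, $b_0^w\in\vB(0,-\sigma;1,-\sigma)$, this gives $\|R_0(\lambda+i0)-R_0(\lambda-i0)\|_{\vB(0,\sigma;1,-\sigma)}\le C\sqrt\lambda$.

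For part (a), fix $\sigma\in\,]1,(1+\rho)/2[$, which is nonempty since $\rho>1$; combining the jump identity with the uniform bounds on the outer factors and the bound on the free jump yields $\|R(\lambda+i0)-R(\lambda-i0)\|_{\vB(0,\sigma;0,-\sigma)}\le C\sqrt\lambda$ on $]0,\delta]$. Inserting this into the reduced formula and using $\int_0^\infty e^{-t\lambda}\sqrt\lambda\,d\lambda=\Gamma(3/2)t^{-3/2}$ gives $\|S(t)\|_{\vB(0,\sigma;0,-\sigma)}\le Ct^{-3/2}$, first for $t$ large and then, by the first paragraph, for all $t>0$. For an arbitrary $s>\frac32$, choose $\sigma\in\,]1,\min(s,(1+\rho)/2)[$; since $\vL^{2,s}\hookrightarrow\vL^{2,\sigma}$ and $\vL^{2,-\sigma}\hookrightarrow\vL^{2,-s}$, one has $\|S(t)\|_{\vB(0,s;0,-s)}\le\|S(t)\|_{\vB(0,\sigma;0,-\sigma)}\le C_s t^{-3/2}$, which is $(\ref{eq5.8})$.

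For part (b), $\rho>2$ lets one invoke Theorem \ref{th4.3}: $R(\lambda\pm i0)=A_0\pm\lambda^{1/2}A_1+O(\lambda^{1/2+\ep})$ in $\vB(-1,s;1,-s)\subset\vB(0,s;0,-s)$ for $s>\frac32$, so the jump equals $2\lambda^{1/2}A_1+O(\lambda^{1/2+\ep})$. Plugging this into the reduced formula and using $\frac{1}{2\pi i}\int_0^\infty e^{-t\lambda}\,2\lambda^{1/2}\,d\lambda=\frac{1}{\pi i}\Gamma(3/2)t^{-3/2}=\frac{1}{2i\sqrt\pi}t^{-3/2}$, together with $\int_\delta^\infty e^{-t\lambda}\lambda^{1/2}\,d\lambda=O(e^{-t\delta/2})$ and $\int_0^\delta e^{-t\lambda}\lambda^{1/2+\ep}\,d\lambda\le\Gamma(3/2+\ep)t^{-3/2-\ep}$, I obtain $S(t)=t^{-3/2}B_1+O(t^{-3/2-\ep})$ in $\vB(0,s;0,-s)$ with $B_1=\frac{1}{2i\sqrt\pi}A_1$, which is of rank one because $A_1$ is (Theorem \ref{th4.3}); this is $(\ref{eq5.7})$. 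The main obstacle I expect is the weighted-space bookkeeping in the jump estimate for part (a): a crude use of $R(z)=R_0(z)-R_0(z)WR(z)$ would destroy the $\sqrt\lambda$-smallness, and the role of the symmetric identity above is precisely to isolate a single factor of the free spectral density $E_0'(\lambda)$ while keeping all the perturbative inverses acting at one fixed weight $\sigma<(1+\rho)/2$; besides this one must carefully justify the contour deformations and the convergence of the resulting integrals, which rely on the limiting absorption principles of Theorem \ref{th4.2} and the high-energy estimate of Theorem \ref{th5.1}.
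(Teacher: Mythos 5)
Your overall architecture (contour deformation to $\Gamma$, high-energy bound from Theorem \ref{th5.1} to kill the branches $\Gamma_\pm$, reduction to the spectral jump on $\Gamma_0$, and Watson's-lemma evaluation of $\int_0^\delta e^{-t\lambda}\lambda^{1/2}d\lambda$ for part (b)) is the same as the paper's; and for part (b) the argument goes through verbatim since Theorem \ref{th4.3} hands you the full $\sqrt z$-expansion in $\vB(-1,s;1,-s)$ for $s>\f32$.

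The gap is in part (a), and it is in the central estimate. You claim $\|E_0'(\lambda)\|_{\vB(0,\sigma;0,-\sigma)}\le C\sqrt\lambda$ uniformly on $]0,\delta]$ for \emph{every} $\sigma>\f12$. That is false in dimension $3$. The jump $(-\Delta_x-\lambda-i0)^{-1}-(-\Delta_x-\lambda+i0)^{-1}$ has kernel $\f{2i\sin(\sqrt\lambda|x-y|)}{4\pi|x-y|}$, whose leading term as $\lambda\to0$ is $\f{2i\sqrt\lambda}{4\pi}$, a rank-one operator with \emph{constant} kernel; this is bounded from $\vL^{2,\sigma}$ to $\vL^{2,-\sigma}$ only when $\sigma>\f32$, since it requires $\w{x}^{-\sigma}\in L^2(\bR^3)$. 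Equivalently, the restriction of the Fourier transform to the sphere $\{|\xi|=\sqrt\lambda\}$ is \emph{not} uniformly bounded on $H^\sigma$ as $\lambda\to0$ when $\sigma\le\f32$ (the sphere collapses to a point where $\hat f$ need not be bounded). This is precisely why Proposition \ref{prop3.7} only gives $R_0(z)-G_0=O(|z|^\ep)$ with unspecified small $\ep$ for $s,s'>\f12$ with $s+s'>2$, and requires $s>\f32$ to produce the genuine $z^{1/2}G_1$ term. The net effect is that your symmetric jump identity, combined with the range $\sigma\in\,]1,(1+\rho)/2[$ you allow yourself for the outer factors $(1+R_0(\lambda\mp i0)W)^{-1}$, forces $\sigma<\f32$ whenever $\rho\le2$; in that regime the middle factor is only $O(\lambda^{\sigma-1})$, and the contour integral yields $O(t^{-\sigma})$ with $\sigma<\f32$ --- strictly weaker than (\ref{eq5.8}).

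Two remarks on how to repair this. First, the invertibility of $1+WG_0$ on $\vL^{2,s}$ actually holds on the wider interval $\f12<s<\rho+\f12$ (this is what the paper establishes in the proof of Lemma \ref{lem5.5}), which for $\rho>1$ does overlap $]\f32,\i[$; if you had used this wider range and taken $\sigma\in\,]\f32,\rho+\f12[$, your identity would close. But you would still have to actually prove the uniform boundedness of the outer boundary-value factors on $\vL^{2,\sigma}$ for small $\lambda>0$ in this wider range, which your write-up does not do. Second, the paper instead avoids the symmetric identity entirely: it performs a finite Neumann expansion $R(z)=\sum_{j\le N}R_0(z)T(z)^j(1+WG_0)^{-1}+O(|z|^{(N+1)\ep_0})$ with $T(z)=(1+WG_0)^{-1}W(R_0(z)-G_0)$, and in Lemma \ref{lem5.5} distributes the weights so that the single factor $R_0(z)-R_0(\bar z)$ carries the heavy weight $s>\f32$ while the remaining $T$-factors are measured in lighter weights $s'=1+\rho-s>\f12$. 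This bookkeeping is precisely what your proposal flags as ``the main obstacle''; the symmetric identity does not, as written, resolve it for $\rho\in\,]1,2]$.
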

\pf Assume that $n=3$  and $\rho>1$. By Corollary \ref{cor5.3}, one has for $f,g \in \vL^{2,s}$ with $s>1$ and for $t>0$
\bea
\w{S(t)f,g} &= & \frac{1}{2\pi i} \int_{0}^{\delta} e^{- t \lambda}\w{(R(\lambda +i0)-R(\lambda -i0)) f,g} d\lambda \nonumber  \\
&     + & \frac{e^{-t \delta}}{2\pi i} \int_{0}^{\infty} e^{-t (\lambda + i  \delta^{-1} \lambda^3)}\w{R(\delta + \lambda +  i  \delta^{-1} \lambda^3 ) f,g} ( 1 +  3 i \delta^{-1} \lambda^2)  d\lambda \\
 &  - &
 \frac{e^{ -t \delta}}{2\pi i} \int_{0}^{\infty} e^{-t (\lambda - i  \delta^{-1} \lambda^3)}\w{R(\delta + \lambda - i \delta^{-1} \lambda^3 ) f,g} ( 1 -  3  i \delta^{-1} \lambda^2)  d\lambda \nonumber
\\
& \triangleq & I_1+ I_2 + I_3. \nonumber
\eea

For $I_2$ and $I_3$, one can apply Theorem \ref{th5.1} to estimate 
\[
|\w{R(\delta + \lambda \pm i  \delta^{-1} \lambda^3 ) f,g}| \le C_M \|f\|_{\vH^{-1,s}}\|g\|_{\vH^{-1,s}}
\]
for $s> \f 12$ and $\lambda \in ]0, M]$ for each fixed $M>0$ and
\[
|\w{R(\delta + \lambda \pm i  \delta^{-1} \lambda^3 ) f,g}| \le C_M \lambda^{-\f 1 3}\|f\|_{L^2}\|g\|_{L^2}
\]
for $\lambda >M$ with $M>1$ sufficiently large. Therefore, if $\rho>1$
\be
|I_k| \le C e^{ -t \delta} \|f\|_{\vH^{0,s}}\|g\|_{\vH^{0,s}}
\ee
for  $k =2,3$ and for any $s > \f 1 2$. \\

To show (\ref{eq5.8}), it remains  to prove that if $\rho>1$ and $n=3$, one has
\be \label{eq5.11}
|\int_{\Gamma_0} e^{- t z}  \w{ R(z)f,g} dz |  \le C t^{-\f 3 2}\|f\|_{\vL^{2,s}}\|g\|_{\vL^{2,s}}
\ee
for any $f, g \in \vL^{2,s}$, $s>\f 3 2$. For $ 1 < s < (\rho+1)/2$, one has for some $\ep_0>0$ 
\[
W (R_0(z) - G_0) = O(|z|^{\ep_0}),  \quad \mbox{ in } \vB(0, s; 0, s) 
\]
for $z$ near $0$ and $z \not\in\bR_+$. By Lemma \ref{lem4.1}, $1+ WG_0$ is invertible in  $\vB(0, s; 0, s) $ for any
$1<s < (\rho+1)/2$. One obtains
\be \label{eq5.12}
R(z) = R_0(z)(1+ W R_0(z))^{-1} =  \sum_{j=0}^N  R_0(z) T(z)^j (1+WG_0)^{-1} +  O(|z|^{(N+1)\ep_0})
\ee
in $\vB(0, s; 0, s) $ with $s>1$, where $N$ is taken such that $(N+1)\ep_0 > \f 12$ and 
\be
T(z) = (1+WG_0)^{-1} W (R_0(z) -G_0).
\ee
Consequently
\be \label{eq5.13}
|\int_{\Gamma_0} e^{- t z}  \w{ (R(z)- R_0(z) \sum_{j=0}^N  T(z)^j (1+WG_0)^{-1})f,g} dz |  \le C t^{-\f 3 2 - \ep}\|f\|_{\vL^{2,s}}\|g\|_{\vL^{2,s}}, \quad \ep >0,
\ee
if $s>1$. (\ref{eq5.11}) follows from the following lemma which achieves the proof of (\ref{eq5.8}). 
\ef

\begin{lemma} \label{lem5.5} For each $j\ge 0$ and $\f 3 2 < s < \rho + \f 12$, there exists some  $C>0$ such that
\be \label{eq5.14}
|\int_{\Gamma_0} e^{- t z}  \w{R_0(z) T(z)^j (1+WG_0)^{-1}f,g} dz |  \le C t^{-\f 3 2}\|f\|_{\vL^{2,s}}\|g\|_{\vL^{2,s}}
\ee
for  any $f, g \in \vL^{2,s}$ and $t >0$.
\end{lemma}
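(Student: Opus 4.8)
The plan is to turn the contour integral into the jump of the integrand across $[0,\delta]$ and then into an elementary Laplace-type estimate. Since $\Gamma$ is oriented from $-i\infty$ to $+i\infty$, the piece $\Gamma_0$ amounts to traversing the segment $[0,\delta]$ once on the side $\Im z<0$, from $\delta$ to $0$, and once on the side $\Im z>0$, from $0$ to $\delta$. Hence, writing $F_j(z)=\w{R_0(z)T(z)^j(1+WG_0)^{-1}f,g}$,
\[
\int_{\Gamma_0}e^{-tz}F_j(z)\,dz=\int_0^\delta e^{-t\lambda}\big(F_j(\lambda+i0)-F_j(\lambda-i0)\big)\,d\lambda ,
\]
so everything reduces to controlling the boundary jump of $F_j$ near $z=0$, the branch of $z^{\f 1 2}$ being normalized so that $(\lambda+i0)^{\f 1 2}-(\lambda-i0)^{\f 1 2}=2\sqrt\lambda$.

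First I would record the low-energy behaviour. By Proposition \ref{prop3.7} with $N=1$ (legitimate since $s>\f 3 2$), $R_0(z)=G_0+z^{\f 1 2}G_1+O(|z|^{\f 1 2+\ep})$ in $\vB(-1,s;1,-s)$; in particular $R_0(z)-G_0=O(|z|^{\f 1 2})$, while $G_0$, being independent of $z$, contributes no jump. Because the potential is short-range, $W=-\na V\cdot\na_v$ gains $\rho+1$ powers of $\w{x}$, so for $\f 3 2<s<\rho+\f 1 2$ one can compose $W(R_0(z)-G_0)$ with $(1+WG_0)^{-1}$ between suitable weighted $L^2$ spaces; thus $T(z)=(1+WG_0)^{-1}W(R_0(z)-G_0)=z^{\f 1 2}T_1+O(|z|^{\f 1 2+\ep})$ with $T_1=(1+WG_0)^{-1}WG_1$, and more crudely $T(z)=O(|z|^{\ep})$ on each subsequent application. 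Combining this with the existence of boundary values on both sides of $[0,\delta]$ (Corollary \ref{cor3.6}, Theorem \ref{th4.2}), one gets, for $z$ near $0$ and $z\notin\bR_+$,
\[
F_j(z)=\alpha_j+\beta_j\,z^{\f 1 2}+O(|z|^{\f 1 2+\ep})\,\|f\|_{\vL^{2,s}}\|g\|_{\vL^{2,s}},\qquad |\alpha_j|+|\beta_j|\le C\|f\|_{\vL^{2,s}}\|g\|_{\vL^{2,s}},
\]
with $\alpha_j=0$ once $j\ge 1$ (as $F_j(z)=O(|z|^{\f 1 2})$ then) and $\beta_j=0$ once $j\ge 2$ (as $F_j(z)=O(|z|^{\f 1 2+\ep})$ then). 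Only the coefficients $\alpha_0,\beta_0$ (coming from pairing the expansion of $R_0(z)$ with $(1+WG_0)^{-1}f$ and $g$) and $\beta_1$ (coming from $G_0T_1(1+WG_0)^{-1}$) are actually used.

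It remains to integrate. Taking the jump across $[0,\delta]$ kills the analytic term $\alpha_j$, leaving $|F_j(\lambda+i0)-F_j(\lambda-i0)|\le C(\sqrt\lambda+\lambda^{\f 1 2+\ep})\,\|f\|_{\vL^{2,s}}\|g\|_{\vL^{2,s}}$ on $[0,\delta]$. Using the elementary bound $\int_0^\delta e^{-t\lambda}\lambda^{\alpha}\,d\lambda\le\int_0^\infty e^{-t\lambda}\lambda^{\alpha}\,d\lambda=\Gamma(\alpha+1)\,t^{-\alpha-1}$, valid for all $t>0$, the $\sqrt\lambda$ term integrates to $O(t^{-\f 3 2})$ and the $\lambda^{\f 1 2+\ep}$ term to $O(t^{-\f 3 2-\ep})$, so for $t\ge 1$ the integral is $\le Ct^{-\f 3 2}\|f\|_{\vL^{2,s}}\|g\|_{\vL^{2,s}}$; for $0<t\le 1$ the integrand is bounded on $[0,\delta]$, so the integral is $O(1)\le Ct^{-\f 3 2}\|f\|_{\vL^{2,s}}\|g\|_{\vL^{2,s}}$ there as well. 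This yields (\ref{eq5.14}).

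The step I expect to be the main obstacle is the second paragraph: pinning down which weighted $L^2$ spaces carry $(1+WG_0)^{-1}$, $W$, $G_0$, $G_1$ and the various remainders, so that the $j$-fold product $R_0(z)T(z)^j(1+WG_0)^{-1}$ really is bounded $\vL^{2,s}\to\vL^{2,-s}$ with the stated decay in $|z|$ and with boundary values. This is where the short-range assumption $\rho>1$ and the upper restriction $s<\rho+\f 1 2$ are essential — the $\rho+1$-power gain from $W$ must dominate the $s+s'$-power loss carried by the $|x-x'|^{-1}$ kernel of $G_0$ — and it requires invertibility of $1+WG_0$ on the relevant range of weighted spaces, obtained from Lemma \ref{lem4.1} (using, where convenient, the parity symmetry $P_0^*=JP_0J$, $Ju(x,v)=u(x,-v)$, which transfers invertibility of $1+G_0W$ on $\vL^{2,-\sigma}$ to that of $1+WG_0$ on $\vL^{2,\sigma}$). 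Once this is settled, the contour and Laplace-integral steps are routine.
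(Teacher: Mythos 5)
Your proof is correct and differs mildly from the paper's in how it controls the boundary jump of the integrand.

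After the reduction (identical to the paper's implicit step) from the contour integral to $\int_0^\delta e^{-t\lambda}\bigl(F_j(\lambda+i0)-F_j(\lambda-i0)\bigr)\,d\lambda$, the paper isolates the jump by a telescoping identity for the product:
\[
R_0(z)T(z)^j-R_0(\bar z)T(\bar z)^j=(R_0(z)-R_0(\bar z))T(z)^j+R_0(\bar z)\sum_{k=0}^{j-1}T(z)^k(T(z)-T(\bar z))T(\bar z)^{j-k-1},
\]
and shows separately that $R_0(z)-R_0(\bar z)=O(\sqrt\lambda)$ in $\vB(0,s;0,-s)$ for $s>\f32$ and $T(z)-T(\bar z)=O(\sqrt\lambda)$ in $\vB(0,s;0,s')$, $s'=\rho+1-s$, while $R_0(\bar z)$ and the remaining $T$-factors are uniformly bounded along the chain of weights $s\to s'\to s$. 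You instead observe that for $j\ge 1$ the rightmost $T(z)$, applied to $h_0=(1+WG_0)^{-1}f\in\vL^{2,s}$ with $s>\f32$, already gives $O(|z|^{\f12})$ via Proposition~\ref{prop3.7} with $N=1$, the remaining factors being merely $O(1)$ or $O(|z|^\ep)$; hence $F_j(z)=O(|z|^{\f12})$ outright for $j\ge1$ and the jump estimate follows by the triangle inequality without telescoping (for $j=0$ you fall back on the jump of $R_0$, which is the same ingredient the paper uses). Both arguments thus ultimately rest on the same expansion of $R_0$ and the same weight bookkeeping ($W$ gains $\rho+1$ powers, $G_0$ costs $s+s'>2$, invertibility of $1+WG_0$ on $\vL^{2,\sigma}$ for $\f12<\sigma<\rho+\f12$), and both need $\rho>1$ and $\f32<s<\rho+\f12$ for exactly the reasons you identify in your last paragraph. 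Your route is arguably a bit cleaner because it never needs the jump of $T(z)$ itself; the paper's telescoping is slightly more systematic and would generalize more readily if one only knew the jump of $R_0$ rather than a $z^{1/2}$-expansion. One detail worth making explicit in a write-up: the refined expansion $T(z)=z^{1/2}T_1+O(|z|^{1/2+\ep})$ holds only as a map $\vL^{2,s}\to\vL^{2,\rho+1-s}$ with $s>\f32$, and after one application the weight $\rho+1-s$ may drop below $\f32$ when $1<\rho\le2$, so only the cruder $O(|z|^\ep)$ bound on $\vL^{2,\sigma}\to\vL^{2,\sigma}$, $\f12<\sigma<\rho+\f12$, is available on iteration — you signal this with ``on each subsequent application,'' and it should be stated plainly.
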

\pf  We want to show that
\be \label{eq5.15}
| \w{ (R_0(z) T(z)^j - R_0(\overline{z}) T(\overline{z})^j)  (1+WG_0)^{-1}f,g}  |  \le C \sqrt \lambda \|f\|_{\vL^{2,s}}\|g\|_{\vL^{2,s}}
\ee
for $ z = \lambda + i0$ with  $\lambda \in [0, \delta]$ and for  $f, g \in \vL^{2,s}$ with  $ \f 3 2 < s < \rho + \f 12$.
Recall that $R_0(z) = b_0^w(v, D_x, D_v) (-\Delta_x -z)^{-1} + r_0(z)$ with $r_0(z)$ a bounded operator-valued function holomorphic in $z$ when $\Re z < a$  for some $0 <a <1$ (see (\ref{eq3.31})). Without loss, we can choose $\delta>0$ small such that $0<\delta <a$ which gives 
\be
R_0(z) - R_0(\bar z) = b_0^w(v, D_x, D_v) ( (-\Delta_x -z )^{-1} - (-\Delta_x -\bar z )^{-1})
\ee
for $ z = \lambda + i0$ with  $\lambda \in [0, \delta]$, $ 0<\delta <1$. Making use of the explicit formula for the integral kernel of
$(-\Delta_x -z )^{-1} $, one obtains
\be \label{eq5.16}
R_0(z) - R_0(\bar z)  = O(\sqrt \lambda), \quad \lambda \in [0, \delta]
\ee
in $\vB(-1, s; \; 1, -s)$ for any $s> \f 3 2$. By Proposition \ref{prop3.7},   
$R_0(z)$ is uniformly bounded in $\vB(-1,s; 1, -s')$ for $s, s' > \f 12$ and $s+s'>2$. We deduce that for any $ \f 12 < s < \rho + \f 12$,  $WG_0$ and
$(1+ WG_0)^{-1}$ belongs to $ \vB(0, s; \; 0,s)$  and 
\be
 T(z) = O(|z|^{\ep_0}) \quad   \mbox{ in  }\vB(0, s; 0, s).
 \ee
Seeing (\ref{eq5.16}), one obtains that for any $ \f 3 2 < s <\rho + \f 12$ and $s'= 1+ \rho -s > \f 1 2$,
\be T(z) - T(\bar z) = (1+ WG_0)^{-1} W( R_0(z) - R_0(\bar z)) = O(\sqrt \lambda )
\ee
in $\vB(0, s; \; 0,s')$  for  $ z = \lambda + i0$. Since for $j \ge 1$,
\bea
\lefteqn{R_0(z) T(z)^j - R_0(\overline{z}) T(\overline{z})^j }\\
&= & (R_0(z)- R_0(\bar z)) T(z)^j  + R_0(\overline{z}) \sum_{k=0}^{j-1}T(z)^k (T(z)- T(\overline{z})) T(\overline{z})^{j-k-1} \nonumber
\eea
one can estimate $|\w{ (R_0(z) T(z)^j - R_0(\overline{z}) T(\overline{z})^j)  (1+WG_0)^{-1}f,g}|$ by
\bea
\lefteqn{ |\w{ (R_0(z) T(z)^j - R_0(\overline{z}) T(\overline{z})^j)  (1+WG_0)^{-1}f,g}|} \nonumber  \\[2mm]
& \le & C \{ \|(R_0(z)- R_0(\bar z))\|_{\vB(0, s; \; 0,-s)} \|T(z)\|^j _{\vB(0, s; \; 0,s)} \\[2mm]
  & &  +   \sum_{k=0}^{j-1} \|R_0(\overline{z})\|_{\vB(0, s'; \; 0,-s)} \|(T(z)- T(\overline{z}))\|_{\vB(0, s; \; 0,s')} 
  \|T(z)\|^{k} _{\vB(0, s'; \; 0,s')} \|T(\overline{z})\|^{j-k-1} _{\vB(0, s; \; 0,s)} \}  \nonumber  \\[2mm]
   & & \times \|f\|_{\vL^{2,s}}\|g\|_{\vL^{2,s}}  \nonumber \\[2mm]
&\le &   C' \sqrt \lambda \|f\|_{\vL^{2,s}}\|g\|_{\vL^{2,s}}  \nonumber
\eea
for  $ \f 3 2 < s < \rho + \f 12$ and $s' = 1+ \rho -s >\f 1 2$. 
 The above estimate is  clearly  also true when $j=0$. 
This proves (\ref{eq5.15}) which implies (\ref{eq5.14}) and consequently  (\ref{eq5.8}). \ef

Assume now $\rho>2$.  Theorem \ref{th4.3} gives that if $\rho>2$,
\be \label{eq5.9}
R(z) = A_0 + \sqrt z A_1 + O(|z|^{\f 1 2 +\ep}), \quad   z\to 0, z\not\in\bR_+,
\ee
in $\vB(-1, s; \; 1, -s)$, $s> \f 3 2$. 
According to (\ref{eq5.9}), $I_1$ can be evaluated by
\[
I_1 = \frac{1}{\pi i} \int_{0}^{\delta} e^{- t \lambda}  \w{ (  \sqrt \lambda  A_1 + O(\lambda^{\f 1 2 +\ep}))f,g} d\lambda
\]
and
\be \label{eq5.10}
|I_1 - \frac{1}{  t^{\f 3 2}}\w{B_1 f,g}| \le C t^{-(\f 3 2 + \ep)}\|f\|_{\vH^{-1,s}}\|g\|_{\vH^{-1,s}}
\ee
with 
\be
 B_1 = \f{1}{\pi i} A_1 \int_0^\infty e^{-s} \sqrt{s} \; ds = \frac{1}{2  i\sqrt \pi} A_1.
\ee
This proves (\ref{eq5.7}).
\ef

\bigskip

{\noindent \bf Proof of Theorem \ref{th1.1}.}  Theorem \ref{th1.1} (a) is just Theorem \ref{th5.1},   Theorem \ref{th1.1} (b) is contained in  Theorem \ref{th4.2} and Theorem  \ref{th5.4} (a). Seeing Theorem \ref{th5.4} (b), to prove Theorem  \ref{th1.1} (c), it remains to calculate the operator $B_1$ given in Theorem \ref{th5.4}.  \\

For $u \in \vL^{2, s}$ with $s> \f 3 2$, one can write
\bea
B_1 u &=& \f{1}{8 \pi^{\f 3 2}}\w{ \gm_0,  (1- W A_0)u}  (1+ G_0 W)^{-1} \gm_0 \\
& =&  \f{1}{8 \pi^{\f 3 2}}\w{ (1- WA_0)^*\gm_0,  u}  (1+ G_0 W)^{-1} \gm_0 \nonumber \\
&= & \f{1}{8 \pi^{\f 3 2}} \w{\nu_0, u} \mu_0 \nonumber
\eea
where
\be
\mu_0 = (1+ G_0 W)^{-1} \gm_0, \quad  \nu_0 =   (1- WA_0)^*\gm_0.
 \ee
Since $\gm_0 \in \vL^{2,-s}$ for any $s> \f 3 2$ and $(1 + G_0 W)^{-1} \in \vB(0,-s; 0,-s)$,   $G_0 W$ and $(WA_0)^{*} \in \vB(0, -s,; 0, -s')$ for any $s'> \f 1 2$ and $\f 3 2 < s  <\f{\rho+1}{2}$ (if $\rho >2$),
$\nu_0, \mu_0$ belong to $\vL^{2,-s}$ for any $s>\f 3 2$. In addition, $\mu_0$ satisfies the equation
\[
\lim_{z\to 0, z\not\in \bR_+} ( 1 + R_0(z) W)\mu_0  = (1+ G_0W) \mu_0 = \gm_0
\]
in $\vL^{2,-s}$. It follows that 
\be
 P\mu_0 =P_0 \gm_0 =0. 
 \ee
Similarly, since $A_0 =\lim_{z\to 0, z\not \in \bR_+} R(z)$ in $\vB(-1, s; 1, -s')$ for any $s, s' > \f 1 2$ with $s+s'>2$ and $ (1- WA_0)^* = 1 + A_0^*W$ ($W$ being skew-adjoint), one can check that
\bea
P^*\nu_0 &=  &   (P_0+ W)^* + W)\gm_0 \\
&=  &   P_0^*\gm_0 =0. \nonumber
\eea

To prove that $\mu_0 = \gm$, we remark that the solution of the equation
$(1+ G_0 W)\mu = \gm_0$ is unique  in $\vL^{2, -s}$ for any $s> \f 3 2$. Therefore it suffices to check that
$\gm$ also verifies the equation $(1+ G_0 W)\gm =  \gm_0$.  To show this, we notice that since $(P_0 + W)\gm =0$, one has for $\lambda <0$
\be \label{eq5.29}
 ( 1+ R_0(\lambda) W) \gm = - \lambda R_0(\lambda) \gm
\ee 
For $\rho >2$, one has $\gm - \gm_0 \in \vL^{2, s}$ for any $0<s < \rho -\f 3 2$. Proposition  \ref{prop3.11} with $\psi =\psi_0$ shows that
\be
\lim_{\lambda \to 0 _-} \lambda R_0(\lambda) \gm = -\gm_0.
\ee
Taking the limit $\lambda \to 0_-$ in (\ref{eq5.29}), one obtains $(1+G_0W) \gm =  \gm_0 = (1+G_0W) \mu_0$.
According to Lemma \ref{lem4.1} with $\rho>2$, the operator $1 + G_0 W$ is invertible  in $\vL^{2,-s}$ for any $ \f 3 2 < s < \f{1+\rho}2$, which gives $\mu_0 = \gm = (1 + G_0 W)^{-1} \gm_0$. \\

To show that $\nu_0=  \gm$, we notice that $1+WG_0\in \vB(0, s; 0, s)$ for any $ \f 3 2 < s <\f{\rho+ 1}{2} $ and is invertible and its inverse is given by:
\be
( 1+ WG_0)^{-1} = 1 - W (1+G_0W)^{-1} G_0 = 1-W A_0.  
\ee
 Therefore $\nu_0 =(1-WA_0)^*\gm_0 = (1-G_0^*W)^{-1}\gm_0$. Since $\gm$ verifies also the equation $P^*\gm =(P_0^* -W)\gm =0$, the similar arguments as those used above allow to conclude that $ (1- G_0^* W)\gm = \gm_0$ which
shows $\nu_0  = (1-G_0^*W)^{-1}\gm_0 = \gm$. This shows
\be
B_1u = \f{1}{8\pi^{\f 3 2}}\w{\gm, u} \gm \quad \mbox{ for } u \in \vL^{2,s} \mbox{ with } s>\f 3 2,
\ee
which proves (\ref{eq1.24})   of Theorem \ref{th1.1}. \ef

In the proof of Theorem \ref{th1.1} (c), we showed that solutions to the equation $Pu =0$ with $u \in \vL^{2,-s}$ for any $s>\f 3 2$, are given by $u =c \gm$ for some constant $c$.  If $V$ is smooth and  coercive ($|\nabla V(x)|\to \infty$ and $V(x)>0$ outside some compact), the hypoelliptic estimate for $P$ allows to conclude that if $Pu=0$ and $u \in \vS'$, then $u \in \vS$ and $u = c \gm$ for some constant $c$. See \cite{hln,hss}. This kind of uniqueness result seems  to be unknown for potentials whose gradient tends to zero. Our proof not only shows that $\mu_0 = c \gm$, but also  compute the constant $c$ which allows to give the universal constant in the leading term of (\ref{eq1.24}.  \\

\noindent {\bf Remarks.} (a). Several interesting questions remain open. In particular, we do not know if results like (\ref{eq1.23}) and (\ref{eq1.24}) hold for $S(t) =e^{-t P}$  as operators from $\vL^1$ to $\vL^\infty$. See Theorem \ref{th3.3} for the free KFP operator.\\ 

(b). The assumptions on dimension and on the decay rate of the potential are only used in low-energy resolvent asymptotics. While we believe that the condition $n=3$ is only technical,  the condition on the decay rate $\rho$ is more essential to our approach which consists in regarding the free KFP operator $P_0$ as  model operator for the full KFP operator with a potential. To study the low-energy resolvent asymptotics for potentials with more slowly decreasing gradients, one may try to use other models such as  the Witten Laplacian 
\[
-\Delta_V =  (-\nabla_x + \nabla V(x))\cdot (\nabla_x + \nabla V(x))  
\]
See \cite{hkn,hln,hhs} for relations between  the KFP operator and the Witten Laplacian in eigenvalue problems, when $|\nabla V(x)| \to +\infty$ and the spectrum  is discrete near $0$. 
When $|\nabla V(x)|$ is slowly decreasing, under some reasonable additional conditions  $-\Delta_V $ is a Schr\"odinger operator $-\Delta_x + U(x)$ with a potential $U(x)$ positive outside some compact and slowly decreasing at the infinity. The threshold spectral properties  for this class of selfadjoint elliptic operators can be  fairly well analyzed, making use of known results on Schr\"odinger operators with globally positive and slowly decreasing potentials (\cite{nak,yaf}).  The open question here is to  see to which extent  the approximation of the KFP operator by the Witten Laplacian is valid in  a scattering framework. We hope to return to this problem in a forthcoming work.

\appendix

\sect{A family of complex harmonic oscillators}

In this appendix, we study some basic spectral properties of a family of non-selfadjoint harmonic oscillators 
\be
\hat{P}_0(\xi) = -\Dl_v+\f{v^2}{4} -\f{n}{2}+ i v\cdot\xi,
\ee 
where $\xi \in \bR^n$ are regarded as parameters. By Fourier transform in $x$-variables, the free KFP
$P_0$ is a direct integral of the family $\{\hat{P}_0(\xi) ; \xi \in \bR^n\}$. We give here some quantitative  results with explicit bounds in $\xi$.  Note 
that non-selfadjoint harmonic operators with complex frequency $ - \Delta_x + \omega x^2$, $\omega\in \bC$,  are studied by several authors. See for example \cite{boul, DK04} and references quoted therein.
\\

The operator $\hat{P}_0(\xi)$ can be written as 
\[
\hat{P}_0(\xi) =-\Dl_v+\f{1}{4}\sum^n_{j=1}(v_j+2i\xi_j)^2-\f{n}{2}+|\xi|^2.
\]
 $\{\hat{P}_0(\xi), \xi\in \bR^n\}$ is  a holomorphic family of type $(A)$ in sense of Kato with constant domain
 $D= D(-\Dl_v+\f{v^2}{4})$ in $L^2(\bR^n_v)$.
 Let $F_j(s)=(-1)^je^{\f{s^2}{2}}\f{d^j}{ds^j}e^{-\f{s^2}{2}}, j \in \bN,$ be the Hermite polynomials and
$$
\varphi_j(s)=(j!\sqrt{2\pi})^{-\f{1}{2}}e^{-\f{s^2}{4}}F_j(s)
$$
the normalized Hermite functions.  For $\xi \in \bR^n$ and $\alpha=(\alpha_1, \alpha_2, \cdots, \alpha_n) \in \bN^n$, define
\be
\psi_\alpha(v) = \prod_{j=1}^n\varphi_{\alpha_j}(v_j) \mbox{ and } \psi_\alpha^\xi(v) = \psi_\alpha(v + 2i \xi).
\ee
For $\alpha, \beta \in \bN^n$,  $\xi \to \w{\psi_\alpha^\xi, \psi_\beta^{-\xi}} $ extends to an entire function for $\xi \in \bC$ and is constant on $i\bR$. Therefore $\w{\psi_\alpha^\xi, \psi_\beta^{-\xi}}$ is  constant for $\xi \in \bC$ and one has
\be \label{basis}
\w{\psi_\alpha^\xi, \psi_\beta^{-\xi}} = \delta_{\alpha\beta} = \left\{\begin{array}{ll}
$1$, & \hbox{$\alpha =\beta$,} \\
$0$, & \hbox{$\alpha \neq \beta$.}
\end{array}
\right., \quad \forall  \alpha, \beta \in \bN^n, \xi \in \bR^n.
\ee
 Using the definition of Hermite functions, one can check that for $\alpha =(\alpha_1, \cdots, \alpha_n) \in \bN^n$
 \be\label{norm} \|\psi_\alpha^\xi\|^2=e^{2\xi^2}\prod_{m=1}^n(\sum_{j=1}^{\alpha_m}\f{C_{\alpha_m}^j}{j!}(2\xi_m)^{2j}). \ee
In fact, when $n=1$, $\xi =\xi_1$ and $k \in \bN$, one has
\begin{eqnarray*}
\|\psi_k^\xi\|^2&=&\int_\bR\varphi_k(y+2i\xi)\varphi_k(y-2i\xi)dy\\
&=&(k!\sqrt{2\pi})^{-1} e^{2\xi^2}\int_\bR F_k(y+2i\xi)F_k(y-2i\xi)e^{-\f{y^2}{2}}dy\\
&=&(k!\sqrt{2\pi})^{-1}e^{2\xi^2}\int_\bR (\sum_{j=0}^k C_k^j(2i\xi)^{k-j}F_j(y))\sum_{l=0}^k C_k^l(-2i\xi)^{k-l}F_l(y))e^{-\f{y^2}{2}}dy\\
&=&e^{2\xi^2}\sum_{j,l=0}^k(k!\sqrt{2\pi})^{-1}C_k^jC_k^l(j!\sqrt{2\pi})^{\f{1}{2}}(l!\sqrt{2\pi})^{\f{1}{2}}(2i\xi)^{k-j}(-2i\xi)^{k-l}\int_\bR\psi_j(y)\psi_l(y)dy\\
&=&e^{2\xi^2}\sum_{j=0}^k\f{j!(C_k^j)^2}{k!}(4\xi^2)^{k-j}\\
&=&e^{2\xi^2}\sum_{j=0}^k\f{C_k^j}{j!}(4\xi^2)^j.
\end{eqnarray*}
The general case $n \ge 1$ follows from the product formula: 
\[
\|\psi_\alpha^\xi\|^2 = \prod_{j=1}^n\|\varphi_{\alpha_j}(\cdot+2 i \xi_j)\|^2.
\]
 (\ref{norm})  shows that if $\xi \neq 0$, $\|\psi_\alpha^\xi\|$ grows exponentially as $|\alpha| \to +\infty$.

\begin{lemma}\label{lem2.2}
The spectrum of $\hat{P}_0(\xi)$ is purely discrete:
\be
\sigma (\hat{P}_0(\xi)) =\{E_l \triangleq l +\xi^2; l \in \bN\}.
\ee
Each eigenvalue $E_l$ is semi-simple ({i.e.}, its algebraic multiplicity and geometric multiplicity are equal) with multiplicity $m_l = \# \{\alpha\in \bN^n; |\alpha| =\alpha_1 + \alpha_2 + \cdots + \alpha_n = l\}$.
The Riesz projection associated with the eigenvalue $l + \xi^2$ is given by
 \be\label{projection} 
 \Pi^\xi_l\phi=\sum_{\alpha, |\alpha|=l} \langle\psi^{-\xi}_\alpha , \phi \rangle\psi^\xi_\alpha, \quad \phi \in L^2.
  \ee
\end{lemma}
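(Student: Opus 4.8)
The plan is to assemble the full spectral decomposition of $\hat{P}_0(\xi)$ by hand from the explicit system $\{\psi_\alpha^\xi\}$, using only that $\hat{P}_0(\xi)$ has compact resolvent. The latter is immediate from what precedes: each $\hat{P}_0(\xi)$ is closed with constant domain $D = D(-\Dl_v+\f{v^2}{4})$, which embeds compactly into $L^2(\bR^n_v)$, so the spectrum is discrete, every eigenvalue has finite algebraic multiplicity, and $\sigma(\hat{P}_0(\xi)) = \sigma_p(\hat{P}_0(\xi))$. First I would record the eigenfunctions via the complex shift $v\mapsto v-2i\xi$. Since $\hat{P}_0(\xi) - |\xi|^2 = -\Dl_v + \f14\sum_j(v_j+2i\xi_j)^2 - \f n2$ and $\p_{v_j}\psi_\alpha(v+2i\xi) = (\p_{w_j}\psi_\alpha)(v+2i\xi)$, a direct computation gives $(\hat{P}_0(\xi) - |\xi|^2)\psi_\alpha^\xi = |\alpha|\,\psi_\alpha^\xi$, where $\psi_\alpha^\xi = \psi_\alpha(\cdot+2i\xi)\in D$ (it equals $e^{|\xi|^2}e^{-iv\cdot\xi}$ times a polynomial of multidegree $\alpha$ times $e^{-v^2/4}$, hence is Schwartz); the same computation applied to $\hat{P}_0(\xi)^* = \hat{P}_0(-\xi)$ gives $(\hat{P}_0(\xi)^* - |\xi|^2)\psi_\alpha^{-\xi} = |\alpha|\,\psi_\alpha^{-\xi}$. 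In particular $\bN + |\xi|^2 \subseteq \sigma(\hat{P}_0(\xi))$, and by the biorthogonality (\ref{basis}) the operators $\Pi_l^\xi$ of (\ref{projection}) are bounded idempotents, mutually orthogonal ($\Pi_l^\xi\Pi_m^\xi = 0$ for $l\ne m$), commuting with $\hat{P}_0(\xi)$ and equal to the scalar $l+|\xi|^2$ on their $m_l$-dimensional ranges.

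The one additional ingredient needed is completeness: $\overline{\mathrm{span}}\{\psi_\alpha^\xi : \alpha\in\bN^n\} = L^2(\bR^n_v)$, and likewise for $\{\psi_\alpha^{-\xi}\}$. This follows from the factorization above: the polynomials $v\mapsto\prod_j F_{\alpha_j}(v_j+2i\xi_j)$, $\alpha\in\bN^n$, form (by triangularity in the multidegree) a linear basis of the polynomial ring, so $\mathrm{span}\{\psi_\alpha^\xi\} = e^{-iv\cdot\xi}\cdot\mathrm{span}\{v^\gamma e^{-v^2/4} : \gamma\in\bN^n\}$, whose closure is all of $L^2$ because multiplication by $e^{-iv\cdot\xi}$ is unitary for real $\xi$ and the Hermite functions form an orthonormal basis.

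The argument is then closed by pairing. If $\hat{P}_0(\xi)u = zu$ with $u\ne 0$, transferring $\hat{P}_0(\xi)$ onto $\psi_\alpha^{-\xi}$ gives $(|\alpha|+|\xi|^2-z)\langle\psi_\alpha^{-\xi},u\rangle = 0$ for all $\alpha$; if $z\notin\bN+|\xi|^2$ this forces $u\perp\psi_\alpha^{-\xi}$ for every $\alpha$, hence $u=0$ by completeness, so $\sigma(\hat{P}_0(\xi)) = \bN+|\xi|^2$. The same device shows $\ker(\hat{P}_0(\xi)-(l+|\xi|^2)) = \mathrm{span}\{\psi_\alpha^\xi : |\alpha|=l\}$ — an eigenvector at $l+|\xi|^2$ is orthogonal to $\psi_\alpha^{-\xi}$ for $|\alpha|\ne l$, and applying $\Pi_l^\xi$ and completeness recovers it — and, by induction on the length of a hypothetical Jordan chain together with (\ref{basis}), that $(\hat{P}_0(\xi)-(l+|\xi|^2))^k u = 0$ already forces $(\hat{P}_0(\xi)-(l+|\xi|^2))u = 0$. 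Hence each $l+|\xi|^2$ is semisimple with geometric and algebraic multiplicity $m_l$, its root subspace is $\mathrm{range}\,\Pi_l^\xi$, and — since the Riesz projection $Q_l$ has this same range and, by the analogous analysis of $\hat{P}_0(\xi)^* = \hat{P}_0(-\xi)$, kernel $(\mathrm{range}\,Q_l^*)^\perp = \mathrm{span}\{\psi_\alpha^{-\xi}:|\alpha|=l\}^\perp = \ker\Pi_l^\xi$ — one gets $Q_l = \Pi_l^\xi$, which is (\ref{projection}). The point that requires care is that $\hat{P}_0(\xi)$ is non-normal, so there is no spectral theorem to lean on: the exhaustion of the root vectors by the $\psi_\alpha^\xi$ and the identification of $\Pi_l^\xi$ with the Riesz projection must be extracted from the explicit biorthogonal system, and the only genuinely new step beyond (\ref{basis}) is the completeness statement above.
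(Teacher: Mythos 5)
Your proof is correct and takes essentially the same approach as the paper's: both hinge on the biorthogonal system $\{\psi_\alpha^\xi\}$, $\{\psi_\alpha^{-\xi}\}$, the adjoint relation $\hat{P}_0(\xi)^* = \hat{P}_0(-\xi)$, and density of $\mathrm{span}\{\psi_\alpha^{-\xi}\}$. You spell out the completeness argument and the exclusion of nontrivial Jordan chains more explicitly than the paper does, but the underlying mechanism is identical.
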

\begin{proof} It is clear that the spectrum of $\hat{P}_0(\xi)$ is purely discrete and $\psi_\alpha^\xi(v)$ is an eigenfunction  associated with the eigenvalue $E_l$. This means that $\sigma (\hat{P}_0(\xi)) \supset \{ l +\xi^2; l \in \bN\}$. Since $\hat{P}_0(\xi)^* = \hat{P}_0(-\xi)$  and that the linear span of $\{\psi_\alpha^{-\xi}(v); \alpha\in \bN^n\}$ is dense in $L^2$,
$\hat{P}_0(-\xi)$ can not have other eigenvalues than $E_l$, $ l=0, 1, \cdots$. This proves that
$\sigma (\hat{P}_0(\xi)) = \{ E_l= l +\xi^2; l \in \bN\}$.

To show that $E_l$ is semisimple, assume by contradiction that $ \exists \varphi \in D$ such that
$(\hat{P}_0(\xi)- E_l) \varphi = c\psi_\alpha^{\xi}$, $|\alpha| =l$ and $c\in \bC^*$.  Then $\psi_\alpha^{\xi}$ is in the range of
$ \hat{P}_0(\xi)- E_l$ and hence is orthogonal to the kernel of
$ (\hat{P}_0(\xi)- E_l)^* = \hat{P}_0(-\xi)- E_l$. In particular, one has
\[
\w{\psi_\alpha^\xi, \psi_\alpha^{-\xi}} =0.
\]
This is impossible due to  (\ref{basis}). This contradiction shows that the eigenvalue $E_l = l +\xi^2$ is semisimple.
Since the pole of the resolvent $(\hat{P}_0(\xi)-z)^{-1}$ is simple at $z=E_l$, the range of the associated Riesz projection defined by
\[
\Pi^\xi_l \phi =\frac{i}{2\pi} \int_{|z-E_l| = \f 1 2} (\hat{P}_0(\xi)-z)^{-1}\phi  dz
\]
is equal to $\ker (\hat{P}_0(\xi))-E_l)$ and the kernel of $\Pi^\xi_l $ is equal to the range of  $\hat{P}_0(\xi)-E_l$. The latter is the orthogonal complement of $\ker  (\hat{P}_0(-\xi))-E_l)$ which is spanned by $\{ \psi_\alpha^{-\xi}; |\alpha|=l\}$. The representation formula (\ref{projection}) then follows from  (\ref{basis}).
\end{proof}

\begin{lemma}\label{lem2.3} Let $n=1$. Then one has for $t>0$
\be \label{1d}
\sum_{k=0}^{\i}e^{-t(k+\xi^2) }\| \Pi_k^\xi\| = \f{e^{-\xi^2(t - 2)}}{1-e^{-t}}e^{\f{4\xi^2}{e^t-1}}, \quad \xi \in \bR.
\ee
\end{lemma}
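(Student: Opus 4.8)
The plan is to reduce the identity to the explicit norm formula (\ref{norm}) together with the rank-one structure of the Riesz projections recorded in (\ref{projection}), and then to evaluate the resulting double series by Tonelli and the elementary generating function $\sum_{k\ge j} C_k^j x^k = x^j(1-x)^{-j-1}$.

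First I would identify $\|\Pi_k^\xi\|$. When $n=1$ the condition $|\alpha|=k$ selects the single index $\alpha=k$, so by (\ref{projection}) the Riesz projection $\Pi_k^\xi$ is the rank-one operator $\phi \mapsto \langle \psi_k^{-\xi}, \phi\rangle \psi_k^\xi$, whose operator norm equals $\|\psi_k^{-\xi}\|\,\|\psi_k^\xi\|$. Since the right-hand side of (\ref{norm}) depends on $\xi$ only through $\xi^2$, one has $\|\psi_k^{-\xi}\| = \|\psi_k^\xi\|$, and therefore
\[
\|\Pi_k^\xi\| = \|\psi_k^\xi\|^2 = e^{2\xi^2}\sum_{j=0}^k \frac{C_k^j}{j!}(4\xi^2)^j .
\]

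Next I would substitute this into the series and interchange the two summations, which is legitimate because every term is nonnegative. Writing $\sum_{k=0}^\infty\sum_{j=0}^k = \sum_{j=0}^\infty\sum_{k=j}^\infty$ gives
\[
\sum_{k=0}^\infty e^{-t(k+\xi^2)}\|\Pi_k^\xi\| = e^{-\xi^2(t-2)}\sum_{j=0}^\infty \frac{(4\xi^2)^j}{j!}\sum_{k=j}^\infty C_k^j e^{-tk}.
\]
The inner sum is evaluated by $\sum_{k\ge j} C_k^j x^k = x^j(1-x)^{-j-1}$ with $x=e^{-t}\in(0,1)$, namely $\sum_{k\ge j}C_k^j e^{-tk} = e^{-tj}(1-e^{-t})^{-j-1}$. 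Substituting this, factoring out $(1-e^{-t})^{-1}$, recognizing the exponential series $\sum_{j\ge0}\frac{1}{j!}\big(\frac{4\xi^2 e^{-t}}{1-e^{-t}}\big)^j = \exp\!\big(\frac{4\xi^2 e^{-t}}{1-e^{-t}}\big)$, and using $e^{-t}/(1-e^{-t}) = (e^t-1)^{-1}$, one arrives at
\[
\sum_{k=0}^\infty e^{-t(k+\xi^2)}\|\Pi_k^\xi\| = \frac{e^{-\xi^2(t-2)}}{1-e^{-t}}\exp\!\Big(\frac{4\xi^2}{e^t-1}\Big),
\]
which is exactly (\ref{1d}).

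There is no real obstacle here: the only points deserving a word of care are the interchange of the two sums (immediate from nonnegativity of the terms) and the convergence of the binomial generating series (immediate since $0<e^{-t}<1$ for $t>0$), while the identity $\|\Pi_k^\xi\| = \|\psi_k^{-\xi}\|\,\|\psi_k^\xi\|$ is just the norm of a rank-one operator. The remainder is the closed-form summation described above.
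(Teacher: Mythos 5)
Your proposal is correct and follows essentially the same route as the paper: both start from the rank-one identity $\|\Pi_k^\xi\|=\|\psi_k^\xi\|\|\psi_k^{-\xi}\|=\|\psi_k^\xi\|^2$, substitute the norm formula (\ref{norm}), interchange the two nonnegative sums, and evaluate the inner sum $\sum_{k\ge j}C_k^j e^{-tk}$ as the negative binomial series. The only cosmetic difference is that you quote the closed form $\sum_{k\ge j}C_k^j x^k = x^j(1-x)^{-j-1}$ directly, whereas the paper first splits off the $j=0$ term and derives the same identity via $\f{d^j}{dx^j}\f{1}{1-x}$; these are the same computation.
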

\pf
In the case $n=1$, one has for any $k \in \bN$
\be \label{norme}
\|\Pi^\xi_k\|=\|\psi^\xi_k\|\|\psi^{-\xi}_k\|=\|\psi_k^\xi\|^2=e^{2\xi^2}\sum_{j=0}^k\f{C_k^j}{j!}(4\xi^2)^j.
\ee

The left-hand side of (\ref{1d}) is norm convergent when $t>0$. In fact, one can calculate the sum of the series as follows 

\begin{eqnarray*}
\lefteqn{\sum_{k=0}^{\i}e^{-t(k+\xi^2) }\| \Pi_k^\xi\|} \\
&=& \sum_{k=0}^{\i}e^{-t(k+\xi^2) + 2\xi^2}\sum_{j=0}^{k}\f{C_k^j}{j!}(4\xi^2)^j\\
&=&\sum_{k=0}^{\i}e^{-t(k+\xi^2)+ 2\xi^2}+\sum_{k=1}^{\i}e^{-t(k+\xi^2) + 2\xi^2}\sum_{j=1}^{k}\f{C_k^j}{j!}(4\xi^2)^j\\
&=&\f{e^{-\xi^2(t - 2)}}{1-e^{-t}}+e^{-\xi^2(t - 2)}\sum_{j=1}^\i\f{(4\xi^2)^j}{j!}\sum_{k=j}^\i C_k^je^{-tk}\\
&=&\f{e^{-\xi^2(t - 2)}}{1-e^{-t}}+e^{-\xi^2(t - 2)}\sum_{j=1}^\i\f{(4\xi^2e^{-t})^j}{(j!)^2}\sum_{k=j}^\i k(k-1)\cdots(k-j+1)e^{-t(k-j)}\\
&=&\f{e^{-\xi^2(t - 2)}}{1-e^{-t}}+e^{-\xi^2(t - 2)}\sum_{j=1}^\i\f{(4\xi^2e^{-t})^j}{(j!)^2}\sum_{k=0}^\i(k+j)(k+j-1)\cdots(k+1)e^{-tk}\\
&=&\f{e^{-\xi^2(t - 2)}}{1-e^{-t}}+e^{-\xi^2(t - 2)}\sum_{j=1}^\i\f{(4\xi^2e^{-t})^j}{(j!)^2}\left\{\f{d^j}{dx^j}\f{1}{1-x}\right\}\mid_{x=e^{-t}}\\
&=&\f{e^{-\xi^2(t - 2)}}{1-e^{-t}}\left(1+\sum_{j=1}^\i\f{1}{j!}(\f{4\xi^2}{e^t-1})^j\right)\\
&=&\f{e^{-\xi^2(t - 2)}}{1-e^{-t}}e^{\f{4\xi^2}{e^t-1}}.
\end{eqnarray*}
\ef

\begin{prop}\label{prop2.3}  Let $ n\ge 1$. 
For any $\xi\in \bR^n$ and $t>0$, one has the following formula of spectral decomposition:
\be\label{expansion} 
e^{-t \hat{P}_0(\xi) }=\sum_{l=0}^{\i}e^{-t(l+\xi^2)}\Pi_l^\xi, 
\ee
where $\Pi_l^\xi$ is the Riesz projection associated with the eigenvalue $l$ of $\hat{P}_0(\xi)$  and the series is norm convergent as operators on $L^2(\bR^n_v)$.
\end{prop}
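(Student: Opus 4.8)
The plan is to identify the series with the $C_0$-semigroup $e^{-t\hat P_0(\xi)}$ by showing that both sides are bounded operators on $L^2(\bR^n_v)$ which coincide on the linear span of the eigenfunctions $\psi_\alpha^\xi$, a span that turns out to be dense.

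First I would show that $S_\xi(t):=\sum_{l=0}^\infty e^{-t(l+\xi^2)}\Pi_l^\xi$ is a norm-convergent series of bounded operators for every $t>0$. From the explicit formula (\ref{projection}) one has $\|\Pi_l^\xi\|\le\sum_{|\alpha|=l}\|\psi_\alpha^{-\xi}\|\,\|\psi_\alpha^\xi\|$, and since the right-hand side of (\ref{norm}) depends on $\xi$ only through $\xi^2$, $\|\psi_\alpha^{-\xi}\|=\|\psi_\alpha^\xi\|$, so $\|\Pi_l^\xi\|\le\sum_{|\alpha|=l}\|\psi_\alpha^\xi\|^2$. Summing over $l$ and using the product structure of (\ref{norm}) together with the one-dimensional summation already carried out in the proof of Lemma \ref{lem2.3}, I obtain, for $t>0$,
\[
\sum_{l=0}^\infty e^{-t(l+\xi^2)}\|\Pi_l^\xi\|\ \le\ \frac{e^{(2-t)\xi^2}}{(1-e^{-t})^n}\,e^{\frac{4\xi^2}{e^t-1}}\ <\ +\infty .
\]
In particular $S_\xi(t)$ is a well-defined bounded operator and the series converges in operator norm; this already establishes the convergence statement of the Proposition.

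Next I would compare the actions on eigenfunctions. By (\ref{projection}) and the biorthogonality (\ref{basis}), $\Pi_m^\xi\psi_\alpha^\xi=\delta_{m,|\alpha|}\,\psi_\alpha^\xi$, hence $S_\xi(t)\psi_\alpha^\xi=e^{-t(|\alpha|+\xi^2)}\psi_\alpha^\xi$ for all $\alpha\in\bN^n$. On the other hand $\hat P_0(\xi)$ is accretive (its real part equals the nonnegative selfadjoint operator $-\Delta_v+\f{v^2}{4}-\f n2$) and, by Lemma \ref{lem2.2}, $(-\infty,0)$ lies in its resolvent set; therefore $\hat P_0(\xi)$ is maximal accretive, $-\hat P_0(\xi)$ generates the contraction semigroup $e^{-t\hat P_0(\xi)}$, and since $\psi_\alpha^\xi$ is an eigenfunction for the eigenvalue $|\alpha|+\xi^2$ one also has $e^{-t\hat P_0(\xi)}\psi_\alpha^\xi=e^{-t(|\alpha|+\xi^2)}\psi_\alpha^\xi$. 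So $S_\xi(t)$ and $e^{-t\hat P_0(\xi)}$ agree on $\vD_\xi:=\operatorname{span}\{\psi_\alpha^\xi;\ \alpha\in\bN^n\}$.

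The main point, and the one I expect to need the most care (for a non-selfadjoint operator completeness of eigenfunctions is not automatic), is that $\vD_\xi$ is dense in $L^2(\bR^n_v)$. Here I would use $\psi_\alpha^\xi(v)=c_\alpha\,e^{-iv\cdot\xi}\,e^{-v^2/4}\prod_{j=1}^n F_{\alpha_j}(v_j+2i\xi_j)$ with $c_\alpha\ne 0$, where $F_k$ is the Hermite polynomial of degree $k$; since $\{\prod_j F_{\alpha_j}(v_j+2i\xi_j):|\alpha|\le N\}$ is a basis of the polynomials of degree $\le N$ (a triangular change of basis), $\operatorname{span}\{\psi_\alpha^\xi:|\alpha|\le N\}=e^{-iv\cdot\xi}\cdot\operatorname{span}\{\psi_\alpha^0:|\alpha|\le N\}$, whence $\vD_\xi=e^{-iv\cdot\xi}\cdot\operatorname{span}\{\psi_\alpha^0;\ \alpha\in\bN^n\}$. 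Since the Hermite functions $\{\psi_\alpha^0\}$ form an orthonormal basis of $L^2$ and multiplication by the unimodular $e^{-iv\cdot\xi}$ is unitary, $\vD_\xi$ is dense; two bounded operators agreeing on a dense set are equal, which gives $S_\xi(t)=e^{-t\hat P_0(\xi)}$, i.e. (\ref{expansion}). Apart from the density, everything is bookkeeping with (\ref{basis}) and (\ref{projection}), and the only estimate needed is the norm bound above, which reduces directly to Lemma \ref{lem2.3}.
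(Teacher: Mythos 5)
Your proof is correct and follows essentially the same route as the paper: norm convergence of the series via the bound from Lemma \ref{lem2.3} and the product structure of (\ref{norm}), then identification of the two bounded operators on the span of the $\psi_\alpha^\xi$ by a density argument. The only difference is that you actually justify the density of that span (via the triangular change of basis among shifted Hermite polynomials and the unitary factor $e^{-iv\cdot\xi}$), a step the paper asserts without proof.
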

\begin{proof} For $n \ge 1$, one has
\[
\|\Pi_l^\xi\| \le \sum_{\alpha= (\alpha_1, \cdots, \alpha_n)\in \bN^n; |\alpha| =l} \prod_{j=1}^n \|\varphi_{\alpha_j}(\cdot+ 2i\xi_{j})\|^2.
\]
By Lemma \ref{lem2.3}, the right-hand side of (\ref{expansion}) is norm convergent for every $t>0$  and can be evaluated by
\bea \label{eq2.10}
\sum_{l=0}^{\i}e^{-t(l+\xi^2)}\|\Pi_l^\xi\| 
& \le & \prod_{j=1}^n \left(\sum_{\alpha_j=0}^\infty e^{-t(\alpha_j+\xi_j^2) }\| \Pi_{\alpha_j}^{\xi_j}\| \right) \\
& = &  \f{e^{-\xi^2(t-2 - \f{4}{e^t-1}) }}{(1-e^{-t})^n}. \nonumber
\eea
Since the both sides of (\ref{expansion}) are equal on the dense subspace spanned by  $\{\psi_\alpha^\xi; \alpha\in \bN\}$, an argument of density shows that they are equal on the whole space $L^2$.
\end{proof}

As a consequence of the proof of Proposition \ref{prop2.3}, we obtain the following estimate on the semigroup
\begin{cor} \label{cor2.4}
The following  estimate holds for $t>0$ and $\xi\in \bR^n$
\be \label{e2.14}
\|e^{-t \hat{P}_0(\xi)}\|_{\vB(L^2(\bR_v^n))}\leq\f{e^{-\xi^2(t-2 - \f{4}{e^t-1}) }}{(1-e^{-t})^n}
\ee
\end{cor}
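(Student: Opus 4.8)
The plan is to read the bound straight off the spectral decomposition of Proposition \ref{prop2.3}. First I would invoke (\ref{expansion}), which gives
\[
e^{-t\hat{P}_0(\xi)} = \sum_{l=0}^{\i} e^{-t(l+\xi^2)}\Pi_l^\xi
\]
as a norm-convergent series of bounded operators on $L^2(\bR^n_v)$ for every $t>0$. Taking operator norms and using the triangle inequality, together with the absolute convergence already established in the proof of Proposition \ref{prop2.3}, yields
\[
\|e^{-t\hat{P}_0(\xi)}\|_{\vB(L^2(\bR^n_v))} \le \sum_{l=0}^{\i} e^{-t(l+\xi^2)}\|\Pi_l^\xi\|.
\]

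The point is then that the right-hand side is exactly the quantity estimated in (\ref{eq2.10}). The product structure $\psi_\alpha^\xi(v) = \prod_{j=1}^n \varphi_{\alpha_j}(v_j + 2i\xi_j)$ of the eigenfunctions, fed into the representation (\ref{projection}) of the Riesz projection, gives $\|\Pi_l^\xi\| \le \sum_{|\alpha|=l}\prod_{j=1}^n \|\varphi_{\alpha_j}(\cdot + 2i\xi_j)\|^2$; hence the whole sum factors as $\prod_{j=1}^n\bigl(\sum_{k\ge 0} e^{-t(k+\xi_j^2)}\|\Pi_k^{\xi_j}\|\bigr)$. Each one-dimensional factor is evaluated in closed form by Lemma \ref{lem2.3}, namely $\f{e^{-\xi_j^2(t-2)}}{1-e^{-t}}\, e^{\f{4\xi_j^2}{e^t-1}}$; multiplying the $n$ factors and using $\sum_{j=1}^n \xi_j^2 = \xi^2$ produces $\f{e^{-\xi^2(t-2 - \f{4}{e^t-1})}}{(1-e^{-t})^n}$, which is the asserted estimate.

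I do not expect any real obstacle here: the corollary is essentially a restatement of the bound (\ref{eq2.10}) obtained en route to Proposition \ref{prop2.3}. The only points requiring care — the identity $\|\Pi_k^{\xi_j}\| = \|\psi_k^{\xi_j}\|^2$ in one dimension (see (\ref{norme})), the subadditive product bound for $\|\Pi_l^\xi\|$ in several dimensions, both coming from (\ref{projection}) and the biorthogonality relation (\ref{basis}), and the legitimacy of termwise norm estimation, guaranteed by the convergence in Lemma \ref{lem2.3} for $t>0$ — are all already contained in the argument for Proposition \ref{prop2.3}, so the corollary follows at once.
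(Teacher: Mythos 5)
Your proposal is correct and is exactly the argument the paper has in mind: the corollary is stated as ``a consequence of the proof of Proposition \ref{prop2.3},'' and you apply the triangle inequality to the norm-convergent spectral expansion (\ref{expansion}) and then read off the bound from (\ref{eq2.10}), which was derived via the factorization over the one-dimensional Lemma \ref{lem2.3}. Nothing more is needed.
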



\begin{thebibliography}{112}



\bibitem{aggmms}  A. Arnold, I. M. Gamba, M. P. Gualdani, S. Mischler, C. Mouhot, C. Sparber, The Wigner-Fokker-Planck equation: Stationary states and large-time behavior,  Math. Models Methods Appl. Sci. 22 (2012), no. 11, 1250034, 31 pp.

\bibitem{boul} L. S. Boulton, Non-self-adjoint harmonic oscillator, compact semigroups and pseudospectra. J. Operator Theory,  47(2002), no. 2, 413-429


\bibitem{DK04}E. B. Davies and A. B. J. Kuijlaars, Spectral Asymptotics of the Non-self-adjoint Harmonic Oscillator, J. London Math. Soc. (2)70(2004), 420-426.

\bibitem{d4} E. B. Davies, Linear Operators and Their Spectra, Web Supplement. Preprint, http://www.mth.kcl.ac.uk/staff/eb$_-$davies/LOTSwebsupp32W.pdf.

\bibitem{dsz} N. Dencker, J. Sj\"ostrand, M. Zworski, Pseudospectra of semiclassical (pseudo-)differential operators,
Commun. on Pure and Appl. Math., LVII(2004), 384-415.

\bibitem{dv} L. Desvillettes, C. Villani, On the trend to global equilibrium in spatially inhomogeneous entropy-dissipating systems: The linear Fokker-Planck equation, Commun. Pure Appl. Math., LIV(2001), 1-42.



\bibitem{hkn} B. Helffer, M.  Klein,  F.  Nier,  Quantitative analysis of metastability in reversible diffusion processes via a Witten complex approach. Mat. Contemp. 26 (2004), 41-85.

\bibitem{hln} B. Helffer,  F. Nier, Hypoelliptic estimates and spectral theory for Fokker-Planck
operators and Witten Laplacians. Lecture Notes in Mathematics, 1862. Springer-Verlag, Berlin, 2005. x+209 pp. ISBN: 3-540-24200-7

\bibitem{hrn} F. H\'erau, F. Nier, Isotropic hypoellipticity and trend to equilibrium for the
Fokker-Planck equation with a high-degree potential. Arch. Ration. Mech. Anal. 171 (2004), no. 2, 151-218.

\bibitem{hhs} F. Hérau,  M. Hitrik, J. Sjöstrand,  Tunnel effect for Kramers-Fokker-Planck type operators: return to equilibrium and applications. Int. Math. Res. Not. IMRN 2008, no. 15, Art. ID rnn057, 48 pp.

\bibitem{hss} F. Hérau, J. Sjöstrand,  C. Stolk, Semiclassical analysis for the Kramers-Fokker-Planck equation. Comm. Partial Differential Equations 30 (2005),  no. 4-6, 689-760. 

\bibitem{hor} L.  Hörmander,  The analysis of linear partial differential operators. III. Pseudo-differential operators.  Classics in Mathematics. Springer, Berlin, 2007.

\bibitem{Kat80}T. Kato, Perturbation Theory of Linear Operators, Springer, Berlin, 1980.

\bibitem{ls} A. Laptev, O. Safronov, Eigenvalues estimates for Schr\"odinger operators with complex potentials,  Comm. Math. Phys.  292  (2009),  no. 1, 29-54.

\bibitem{nak} S. Nakamura, Low energy asymptotics for Schrödinger operators with slowly decreasing potentials. Comm. Math. Phys. 161 (1994), no. 1, 63-76.

\bibitem{n} F. Nier,   Hypoellipticity for Fokker-Planck operators and Witten Laplacians. Lectures on the analysis of nonlinear partial differential equations.  pp. 31-84 in {\it  Morningside Lect. in Math.}, Vol. 1, ed. F. H. Lin, X. P. Wang and P. Zhang,  Int. Press, Somerville, MA, 2012, iv+317 pp. ISBN: 978-1-57146-235-0

\bibitem{risc} H. Risken,  The Fokker-Planck equation, Methods of solutions and applications.  Springer, Berlin, 1989.

\bibitem{v} C. Villani,   Hypocoercivity. Mem. Amer. Math. Soc. 202 (2009), no. 950, iv+141 pp. ISBN:
978-0-8218-4498-4

\bibitem{w0} X.P. Wang, Asymptotic expansion in time of the Schr\"odinger group on conical manifolds,  Ann.  Inst.  Fourier  56(6) (2006), 1903-1945.

\bibitem{w1} X. P. Wang, Number of eigenvalues for dissipative  Schr\"odinger operators under perturbation, 
 J. Math. Pures Appl. 96 (2011), 409-422

\bibitem{w2}X. P. Wang, Time-decay of semigroups generated by dissipative Schrödinger operators. J. Differential Equations 253 (2012), no. 12, 3523-3542.


\bibitem{yaf} D. Yafaev,  The low energy scattering for slowly decreasing potentials,  Commun.
Math Phys. 85 (1982), 117-196.


\end{thebibliography}
\end{document}